\definecolor{maroon}{cmyk}{0,0.87,0.68,0.32}
\newtheorem{definition}{Definition}[section]
\newtheorem{theorem}{Theorem}[section]
\newtheorem{proposition}{Proposition}[section]
\newtheorem{lemma}{Lemma}[section]
\newtheorem{remark}{Remark}[section]
\newtheorem{corollary}{Corollary}[section]
\renewcommand*\nompreamble{\begin{multicols}{2}}
\renewcommand*\nompostamble{\end{multicols}}
\def\ind{1\!\!1}
\numberwithin{equation}{section}
\title{\textbf{\Large Continuous and Impulse Controls Differential Game in Finite Horizon with Nash-Equilibrium and Application}}
\author{Brahim El Asri\thanks{Ibn Zohr University, Lab. LISAD, \'Equipe Aide à la D\'ecision, ENSA, B.P. 1136, Agadir, Morocco. E-mail: b.elasri@uiz.ac.ma.} \, and\, Hafid Lalioui\thanks{Ibn Zohr University, Lab. LISAD, \'Equipe Aide à la D\'ecision, ENSA, B.P. 1136, Agadir, Morocco. E-mail: hafid.lalioui@edu.uiz.ac.ma.}}
\providecommand{\keywords}[1]{\small\textbf{\textit{Keywords}} #1}
\providecommand{\MSC}[1]{\small\textbf{\textit{MS Classifications (2020)}} #1}
\providecommand{\JEL}[1]{\small\textbf{\textit{JEL Classifications (2020)}} #1}
\begin{document}
\date{}
\pagestyle{fancy}
\fancyhead[RE]{}
\fancyhead[RO]{}
\fancyhead[LO]{\leftmark}
\fancyhead[LE]{\leftmark}
\maketitle
\begin{abstract}
This paper considers a new class of deterministic finite-time horizon, two-player, zero-sum differential games (DGs) in which the maximizing player is allowed to take continuous and impulse controls whereas the minimizing player is allowed to take impulse control only. We seek to approximate the value function, and to provide a verification theorem for this class of DGs. By means of dynamic programming principle (DPP) in viscosity solution (VS) framework, we first characterize the value function as the unique VS to the related Hamilton-Jacobi-Bellman-Isaacs (HJBI) double-obstacle equation. Next, we prove that an approximate value function exists, that it is the unique solution to an approximate HJBI double-obstacle equation, and converges locally uniformly towards the value function of each player when the time discretization step goes to zero. Moreover, we provide a verification theorem which characterizes a Nash-equilibrium (NE) for the DG control problem considered. Finally, by applying our results, we derive a new continuous-time portfolio optimization model, and we provide related computational algorithms.
\end{abstract}
\keywords{Zero-sum differential game, Impulse control, Viscosity solution,  Discrete approximation,  Verification theorem, Nash-equilibrium, Continuous-time portfolio optimization.}\vspace{0.25cm}\\
\MSC{49K35, 49L20, 49L25, 49N70, 49N90, 91G10.}\vspace{0.25cm}\\
\JEL{C61, C62, C63, C72, C73, G11.}
\nomenclature{symbol}{description}
\printnomenclature
\begin{table}[H]
\textbf{Acronym \& Nomenclature}\\

\begin{tabular}{|llll|}
\hline
BC&Bounded continuous&OC&Optimal control\\
BUC&Bounded uniformly continuous&ODE&Ordinary differential equation\\
DG&Differential game&PDE&Partial differential equation\\
DPE&Dynamic programming equation&QVI&Quasi-variational inequality\\
DPP&Dynamic programming principle&$V$&Value function\\
GAN&Generative adversarial network&$V^-$&Lower value function\\
$H$&Hamiltonian&$V^+$&Upper value function\\
$H_h$&Approximate Hamiltonian&$v_h$&Approximate value function\\
HJB&Hamilton-Jacobi-Bellman&VS&Viscosity solution\\
HJBI&Hamilton-Jacobi-Bellman-Isaacs&$\lambda$&Discount factor\\
HJBI$_h$&Approximate HJBI equation&$h$&Time-discretization step\\
NE&Nash-equilibrium&$\epsilon$&Tolerance\\
\hline
\end{tabular}
\end{table}
\section{Introduction}\label{Sect.1}
\textit{Optimal control (OC)} theory is an important field of research due to its connections with partial differential equations (PDEs) and many fields of engineering such as mathematical finance. As a consequence, OC problems can be used for designing numerical algorithms to nonlinear PDEs arising from many optimization problems, we refer for the instant to Bensoussan and Lions \cite{BL84}, Fleming and Rishel \cite{FR75}, Fleming and Soner \cite{FS06} and Pham \cite{Pha09} (see also \cite{BC97,B94,L82}). \textit{Impulse control} and \textit{differential game (DG)} problems appear in many practical situations, for example in mathematical finance one can consider the option pricing and the control of exchange rate problems by Bernhard \cite{B05}, Bernhard \& al. \cite{BELT06} and Bertola \& al. \cite{BRY16} (see also Barles \cite{Ba85}, Shaiju and Dharmatti \cite{DS05}, Dharmatti and Ramaswamy \cite{DR06}, Yong \cite{Y94}, Zhang \cite{Z11} and \cite{BL84} for more information). The rigorous mathematical study of OC problems and DGs gives rise to some non-linear partial differential equation (PDE), usually called \textit{Hamilton-Jacobi-Bellman (HJB)} equation for classic OC problems and \textit{Hamilton-Jacobi-Bellman-Isaacs (HJBI)} equation for DGs, satisfied by the \textit{value function} related to the control problem or DG. That is the value is the unique solution to the corresponding HJB or HJBI equation. In most cases, even in very simple, these value functions are not sufficiently smooth, then the related PDE needs to be studied in \textit{viscosity solution (VS)} framework. Introduced in 1980s by Crandall and Lions \cite{CL83} (see also Crandall \& al. \cite{CEL84, CIL92}) to circumvent the fact that the value function of control problems or DGs is not smooth enough, the notion of VS provides very powerful means to study in great generality and gives a rigorous formulation of the related PDEs to these control problems or DGs. The notion of value function has then a key role in the theory of OC problems and DGs, and the related PDEs should be considered in the viscosity sense.
\subsection{Continuous and Impulse Controls Differential Game}
In a previous work El Asri and Lalioui \cite{EL21}, a two-player, zero-sum, deterministic DG where each player uses both continuous and impulse controls in infinite-time horizon was studied (see also El Asri \& al. \cite{ELM21}). In \cite{EL21}, we proved under \textit{Isaac's condition} that the \textit{lower} and \textit{upper} value functions coincide. In \cite{ELM21}, the zero-sum deterministic impulse controls game problem we have considered involves only impulse controls in infinite-time horizon, where a \textit{new} HJBI \textit{quasi-variational inequality (QVI)} was defined to prove, under a \textit{proportional property assumption} on the maximizing player cost, that the value functions coincide and turn out to be the unique VS to the new HJBI QVI. The problem considered in this paper and the obtained results extend those in \cite{EL21,ELM21}, and provides an application to \textit{continuous-time portfolio} optimization problem.
\par This paper studies a \textit{new} class of deterministic finite-time horizon, two-player, zero-sum, continuous and impulse controls DG, defined by the $\mathbb{R}^n-$valued \textit{state vector} $y_{t,x}(s)$ solution of the dynamical equation (E1) below:
\begin{equation*}
\begin{aligned}
\text{(E1)}\;y_{t,x}(s)=&\;x+\int_{t}^{s}b\bigl(r,y_{t,x}(r);\theta(r)\bigr)dr+\sum_{m\geq 1}g_\xi\bigl(\tau_m,y_{t,x}(\tau_m^-);\xi_m\bigr)\ind_{[\tau_m,T]}(s)\prod_{k\geq 1}\ind_{\{\tau_m\neq\rho_k\}}\\
&+\sum_{k\geq 1}g_\eta\bigl(\rho_k,y_{t,x}(\rho_k^-);\eta_k\bigr)\ind_{[\rho_k,T]}(s),
\end{aligned}
\end{equation*}
for time variables $T\in(0,+\infty)$, $t\in[0,T]$ and $s\in[t,T]$, with initial state $y_{t,x}(t^-)=x\in\mathbb{R}^n$, where $y_{t,x}(t^-):=\lim_{t^\prime\uparrow t}y_{t,x}(t^\prime)$. In the differential form, for $s\neq\tau_m$, $s\neq\rho_k$ and the initial state $x$, the dynamical equation (E1) is governed by the following controlled ordinary differential equation (ODE): $$\dot y_{t,x}(s)=b\bigl(s,y_{t,x}(s);\theta(s)\bigr),\;\text{and}\;y_{t,x}(t^-)=x,$$ where $$\dot y_{t,x}(s):=\frac{dy_{t,x}(s)}{ds},$$ $\theta(.)\in\Theta(t,T)$ being the continuous control in $\Theta(t,T)$, the space of measurable functions from $[t,T]\subset\mathbb{R}_+$ into $\mathbb{R}^l$, and $b$ is a function that satisfies the following assumption:
\begin{itemize}
\item[\textbf{[$\textbf{H}_b$]}] \textbf{(Dynamic)} The function $b:(s,y,\theta)\in[0,+\infty)\times\mathbb{R}^n\times\mathbb{R}^l\rightarrow b(s,y;\theta)\in\mathbb{R}^n$ is continuous w.r.t. $s$ uniformly in $y$ and $\theta$, Lipschitz-continuous w.r.t. $y$ uniformly in $s$ and $\theta$ with constant $C_b>0$, and continuous w.r.t. $\theta$. Moreover, $b$ satisfies $\bigl\|b(s,y;\theta)\bigr\|_\infty\leq M$ for any $(s,y,\theta)\in[0,+\infty)\times\mathbb{R}^n\times\mathbb{R}^l$ and some positive constant $M$.
\end{itemize}
The state vector $y_{t,x}(s)$, in addition to the continuous evolution due to the ODE above, undergoes impulses (jumps) $\xi_m$ and $\eta_k$ at certain impulse stopping times $\tau_m$ and $\rho_k$, respectively, that is:
\begin{equation*}
\left\{
\begin{aligned}
y_{t,x}(\tau_m^+)=&\;y_{t,x}(\tau_m^-)+g_\xi\bigl(\tau_m,y_{t,x}(\tau_m^-);\xi_m\bigr)\prod_{k\geq 1}\ind_{\{\tau_m\neq\rho_k\}},\;t\leq\tau_m\leq T,\;\xi_m\neq 0;\\
y_{t,x}(\rho_k^+)=&\;y_{t,x}(\rho_k^-)+g_\eta\bigl(\rho_k,y_{t,x}(\rho_k^-);\eta_k\bigr),\;t\leq\rho_k\leq T,\;\eta_k\neq 0,
\end{aligned}
\right.
\end{equation*}
where $y_{t,x}(s^-):=\lim_{s^\prime\uparrow s}y_{t,x}(s^\prime)$ and $y_{t,x}(s^+):=\lim_{s^\prime\downarrow s}y_{t,x}(s^\prime)$, under the following assumption on the two functions $g_\xi$ and $g_\eta$:
\begin{itemize}
\item[\textbf{[$\textbf{H}_{g}$]}] \textbf{(Impulses Form)} The function $g_\xi:(s,y,\xi)\in [0,+\infty)\times\mathbb{R}^n\times\mathbb{R}^p\rightarrow g_\xi(s,y;\xi)\in\mathbb{R}^n$ $\bigl(\text{resp.}\;g_\eta:(s,y,\eta)\in [0,+\infty)\times\mathbb{R}^n\times\mathbb{R}^q\rightarrow g_\eta(s,y;\eta)\in\mathbb{R}^n\bigr)$ is Lipschitz-continuous w.r.t. $s$, uniformly in $y$ and $\xi$ (resp. $\eta$), with constant $\tilde{C}_{g_\xi}>0$ $(\text{resp.}\;\tilde{C}_{g_\eta}>0)$, and Lipschitz-continuous w.r.t. $y$, uniformly in $s$ and $\xi$ (resp. $\eta$), with constant $C_{g_\xi}>0$ $(\text{resp.}\;C_{g_\eta}>0)$.
\end{itemize}
\subsection{HJBI Equation and Approximate Equation}
Initiated in the 1950s by Bellman \cite{B57}, the \textit{dynamic programming principle (DPP)} leads, for our deterministic finite-time horizon, two-player, zero-sum, DG control problem, to a non-linear PDE satisfied by the Elliott-Kalton \cite{EK72,EK74} value function of the game, and given by the following system:
\begin{equation*}
\text{(HJBI)}\;\left\{
\begin{aligned}
&
\begin{aligned}
\max\biggl\{\min\Bigl[&-\frac{\partial}{\partial s}v(s,y)+\lambda v(s,y)+H\bigl(s,y,D_{y}v(s,y)\bigr),v(s,y)-\mathcal{H}_{sup}^c v(s,y)\Bigr];\\
&v(s,y)-\mathcal{H}_{inf}^\chi v(s,y)\biggr\}=0,\;\text{on}\;[t,T)\times\mathbb{R}^n;
\end{aligned}\\
&v(T,y)=G(y)\;\text{for all}\;y\in\mathbb{R}^n,
\end{aligned}
\right.
\end{equation*}
where the \textit{Hamiltonian} ($H$) and the two \textit{non-local cost operators} ($\mathcal{H}_{sup}^c$), ($\mathcal{H}_{inf}^\chi$) are classic expressions given in Section $\ref{Sect.2.3}$ below. The above system (HJBI) called \textit{Hamilton-Jacobi-Bellman-Isaacs (HJBI)} equation, or \textit{dynamic programming equation (DPE)}, and we will refer to it as \textit{HJBI equation}. By combining the notion of VS for the HJBI equation, with \textit{comparison principle} for these solutions, we characterize the value function of the zero-sum DG control problem studied as the unique VS of the HJBI equation, and this can then be used to obtain further results. Indeed, our paper will provide a discrete-time approximation for the HJBI equation. Let $h>0$ be the time discretization step, $h_0$ be a positive number, and $\Phi(h)$ be a continuous function such that $\Phi(0)=1$ and $0<\Phi(h)<1$ for $0<h<h_0$, the \textit{approximate equation} (HJBI$_h$) of the HJBI equation will be given by the following system:
\begin{equation*}
\text{(HJBI$_h$)}\;\left\{
\begin{aligned}
&
\begin{aligned}
\max\biggl\{\min\Bigl[&H_h\bigl(s,y,v_h(s,y)\bigr),v_h(s,y)-\Phi(h)\mathcal{H}_{sup}^c v_h(s,y)\Bigr];v_h(s,y)-\Phi(h)\mathcal{H}_{inf}^\chi v_h(s,y)\biggr\}=0,\\
&\;\text{on}\;[t,T)\times\mathbb{R}^n;\\
\end{aligned}\\
&v_h(T,y)=G(y)\;\text{for all}\;y\in\mathbb{R}^n,
\end{aligned}
\right.
\end{equation*}
where the \textit{approximate Hamiltonian} ($H_h$) is defined in Section $\ref{Sect.2.3}$ below. We may use this approximate equation (HJBI$_h$) to give some computational aspects for our zero-sum DG control problem. Indeed, the convergence of the \textit{approximate value function}, unique solution of the approximate equation (HJBI$_h$), to the unique bounded uniformly continuous (BUC) VS of the HJBI equation, leads to a numerical approach for the considered DG control problem. The \textit{Nash-equilibrium (NE)} of Section \ref{Sect.5} and the computational algorithms of Section $\ref{Sect.7}$ will give an illustration of this approach.
\subsection{Impulse Control Problems and Differential Games with Impulses}
Regarding the literature on optimal impulse control problems and DGs with impulse controls, one might find numerous variants of OC problems with impulse, for example Liu \& al. \cite{LTJW98} have considered the case where the number of jump instants is fixed, and Reddy \& al. \cite{RWZ16} have studied a problem when the impulse instants are known a priori. The literature on DGs with impulse controls is sparse, zero-sum games with one player using piece-wise continuous controls and the other using impulses were studied in a deterministic setting in \cite{Y94}, and in a stochastic setting in \cite{Z11} and Azimzadeh \cite{A17} (see also Issacs \cite{I65}). In \cite{Ba85} and El Farouq \& al. \cite{BBEl10} the authors studied a deterministic impulse control problem in infinite-time horizon and a finite-time horizon DG, respectively (see also El Asri \cite{El13}). Bernhard \& al. \cite{B05,BELT06} introduced impulse control in zero-sum DGs to study an option pricing problem. Impulse control problems are typically solved using two main approaches, one based on \textit{Bellman's DPP} \cite{B57}, and another using \textit{Pontryagin's maximum principle} \cite{BGMP62} to compute the value function (see e.g. \cite{BRY16} and Blaquiere \& al. \cite{B85}). Recent papers by Cosso \cite{C13} and El Asri and Mazid \cite{ElM18} consider dynamic programming approach for zero-sum stochastic DGs where both players use only impulse control (see also El Asri and Mazid \cite{ElM20} where a state and time dependent cost functions stochastic impulse control problem was considered). Works by A\"id \& al. \cite{ABCCT20}, Basei \& al. \cite{BCG21}, Campi and De Santis \cite{CS20} and Sadana \& al. \cite{SRZ21,SRBZ21} study some nonzero-sum DGs with impulse controls. We mention that in \cite{ABCCT20} authors studied a DG between two nations that have different targets for the currency exchange rate, and provided a system of QVIs that needs to be solved in order to compute the NE. In \cite{SRZ21} the necessary and sufficient conditions for the existence of an open-loop NE for a class of DGs with impulse control were formulated. In their recent work, Gammoudi and Zidani \cite{GZ22} have studied a two-player zero-sum DG with state constraints. Regarding discrete-time approximation of HJB equation of deterministic control theory, we cite the works by Falcone \cite{F87}, Gonzalez and Rofman \cite{GR85,GR85_2}, Capuzzo-Dolcetta \cite{CD83}, Capuzzo-Dolcetta and Ishii \cite{CDI84}, and recent works by El Farouq \cite{El17,El17_2} related to deterministic impulse control problems (see also \cite{BS91,S85,S85',CF97,CF99}). Another method for obtaining an approximate solution of the HJB equation is the \textit{adaptive dynamic programming} (see for example Mu \& al. \cite{MWN20,MWN21}).
\subsection{Financial Market Modeling, Contributions and Outline}
The use of OC methods to analyze financial market models has expanded at a remarkable rate after the revolutionary works by Markowitz \cite{Mar52} and Merton \cite{Mer69,Mer71,Mer90}. Many researches have dealt with the role of OC in portfolio optimization, including Eastham and Hastings \cite{EaHa88}, Hastings \cite{Has92} and Korn \cite{kor98}. In Section \ref{Sect.6}, a deterministic finite-time horizon, two-player, zero-sum, impulse controls DG approach for continuous-time portfolio optimization will be given. We first adjust the functions $b$, $g_\xi$ and $g_\eta$ of the dynamical equation (E1) to our portfolio optimization problem, then, for time variable $s\in[t,T]$ and a fixed positive real discount factor $\lambda$, we consider the following discounted terms:
\begin{enumerate}
\item A running gain/cost of integral type $\int_{t}^{T}f^\pi\bigl(s,y_{t,x}^{\psi,v}(s);\theta(s)\bigl)\exp\bigl(-\lambda(s-t)\bigr)ds$, giving by the running gain/cost function $f^\pi:=L^\pi-U^\pi$, where $L^\pi$ and $U^\pi$ denote, respectively, the investor's stocks holding cost and his instantaneous utility function;
\item The total jump costs $$-\sum_{m\geq1}c^\pi\bigl(\tau_m,y_{t,x}^{\psi,v}(\tau_m^-);\xi_m\bigr)\exp\bigl(-\lambda(\tau_m-t)\bigr)\ind_{\{\tau_m\leq T\}}\prod_{k\geq 1}\ind_{\{\tau_m\neq\rho_k\}},$$ and $\sum_{k\geq1}\chi^\pi\bigl(\rho_k,y_{t,x}^{\psi,v}(\rho_k^-);\eta_k\bigr)\exp\bigl(-\lambda(\rho_k-t)\bigr)\ind_{\{\rho_k\leq T\}}$ for the maximizing player$-\xi$ (market) and the minimizing player$-\eta$ (investor), respectively, with impulse stopping times $\tau_m,\rho_k$ and impulse values $\xi_m,\eta_k$;
\item A terminal gain/cost $G^\pi\bigl(y_{t,x}^{\psi,v}(T)\bigr)\exp\bigl(-\lambda(T-t)\bigr)$ giving by the function $G^\pi$,
\end{enumerate}
with the assumption that the flow of funds is between the market and the investor who reacts immediately to the market whereas the market is not so quick in reacting to the investor's moves. We note that $\psi:=\bigl(\theta(.),u:=(\tau_m,\xi_m)_{m\in\mathbb{N}^*}\bigr)$ represents the admissible continuous-impulse control for maximizing player$-\xi$ (market) and $v:=(\rho_k,\eta_k)_{k\in\mathbb{N}^*}$ is the admissible impulse control for minimizing player$-\eta$ (investor). Thus we make our deterministic finite-time horizon, DG framework for the continuous-time portfolio optimization problem. Using the three discounted terms in the above, we can define an Elliott-Kalton \cite{EK72,EK74} value function $v(t,w)$ for our portfolio optimization problem which represents the investor's lost in the worst-case scenario, we then apply our results to derive a new continuous-time portfolio optimization model. Following \cite{CDI84,GR85,GR85_2,F87}, we derive some computational aspects for $v(t,w)$ from the approximate equation (HJBI$_h$).
\par By establishing existence and uniqueness results for the considered class of DGs in viscosity sense, providing discrete-time approximation method of their HJBI equation which leads to a NE, applying to mathematical finance, and providing computational algorithms, our paper contributes to both the theory and applications of DGs with impulse controls. This paper also leads to a new continuous-time portfolio optimization model where the investor tries to counteract to dangerous scenarios that can happen because of market price fluctuations. To the best of our knowledge the literature on deterministic DGs does not provide any theoretical or computational means to study the class of DGs we have considered in this paper.
\par The outline of the paper is the following: in Section \ref{Sect.2}, we formulate the zero-sum DG control problem studied and we define its value function, then we give the DPP and regularity results. In Section \ref{Sect.3}, by means of the VS framework, we investigate the HJBI equation that characterizes the value function of the game studied. Section \ref{Sect.4} deals with the approximate equation (HJBI$_h$) and discusses the convergence of the \textit{approximation scheme}. More precisely, we prove that the approximate value function converges, as the discretization step goes to zero, locally uniformly towards the value function of the considered game. In Section \ref{Sect.5}, we expose a \textit{verification theorem} for identifying a NE strategy derived from the convergence result of Section \ref{Sect.4}. In Section \ref{Sect.6}, we apply the theory we have developed to derive a new continuous-time portfolio optimization model where the market is playing against the investor and wishes to maximize his discounted terminal cost, we give a \textit{portfolio strategy}. Finally, in Section \ref{Sect.7}, we provide related \textit{value and policy iteration} algorithms for our zero-sum DG control problem.
\section{Game Problem Formulation and Preliminary Results}\label{Sect.2}
\subsection{Zero-Sum Deterministic Differential Game Control Problem}
We will be given the precise statement of our two-player, zero-sum, deterministic DG control problem, the definition of its related value functions and some preliminary results. The \textit{state vector} $y_{t,x}(s)$ of the two-player, zero-sum, deterministic continuous and impulse controls DG considered is given, for finite-time horizon with time variables $T\in(0,+\infty)$, $t\in[0,T]$, by the solution of the following dynamical system:
\begin{equation*}
\text{(S)}\;\left\{
\begin{aligned}
\dot y_{t,x}(s)&=b\bigl(s,y_{t,x}(s);\theta(s)\bigr),\;s\neq\tau_m,\;s\neq\rho_k,\;s\in[t,T];\\
y_{t,x}(\tau_m^+)&=y_{t,x}(\tau_m^-)+g_\xi\bigl(\tau_m,y_{t,x}(\tau_m^-);\xi_m\bigr)\prod_{k\geq 1}\ind_{\{\tau_m\neq\rho_k\}},\;\tau_m\in[t,T],\;\xi_m\neq 0;\\
y_{t,x}(\rho_k^+)&=y_{t,x}(\rho_k^-)+g_\eta\bigl(\rho_k,y_{t,x}(\rho_k^-);\eta_k\bigr),\;\rho_k\in[t,T],\;\eta_k\neq 0;\\
y_{t,x}(t^-)&=x\in\mathbb{R}^n\;\text{(initial state)},
\end{aligned}
\right.
\end{equation*}
the evolution of the state system (S), described by the mapping $y_{t,x}:[t,T]\rightarrow\mathbb{R}^n$, is controlled by two players:
\begin{enumerate}[i.]
\item A \textit{maximizing player}$-\xi$ who uses both \textit{continuous control} $\theta(.)$ and \textit{impulse control} $u:=(\tau_m,\xi_m)_{m\in\mathbb{N}^*}$;
\item A \textit{minimizing player}$-\eta$ who adopts only \textit{impulse control} $v:=(\rho_k,\eta_k)_{k\in\mathbb{N}^*}$,
\end{enumerate}
where $\mathbb{N}^*:=\mathbb{N}\backslash\{0\}$. The mapping $y_{t,x}:[t,T]\rightarrow\mathbb{R}^n$ is called the \textit{response} or the \textit{state} corresponding to controls $\theta(.),u$ and $v$. These controls are defined, for our zero-sum DG control problem, as follows:
\begin{definition}[Continuous and Impulse Controls]
We let the continuous control $\theta(.)$ and the impulse controls $u$ and $v$, related to the zero-sum DG control problem studied, be defined by:
\begin{enumerate}[i.]
\item A continuous control $\theta(.)\in\Theta(t,T)$ is giving by a map $\theta:[t,T]\rightarrow\mathbb{R}^l$, where $\Theta(t,T)$ denotes the set of all measurable functions of $[t,T]\subset\mathbb{R}_+$ to $\mathbb{R}^l$;
\item An impulse control $u:=(\tau_m,\xi_m)_{m\in\mathbb{N}^*}\in\mathcal{U}(t,T)$ for player$-\xi$ $\bigl(\text{resp.}\;v:=(\rho_k,\eta_k)_{k\in\mathbb{N}^*}\in\mathcal{V}(t,T)$ for player$-\eta\bigr)$ is defined by the non-decreasing impulse time sequence $\{\tau_m\}_{m\in\mathbb{N}^*}\;\bigl(\text{resp.}\;\{\rho_k\}_{k\in\mathbb{N}^*}\bigr)$ of $[t,T]$, and by the impulse value (or, size) sequence $\{\xi_m\}_{m\in\mathbb{N}^*}$ $\bigl(\text{resp.}\;\{\eta_k\}_{k\in\mathbb{N}^*}\bigr)$ of elements of $U\subset\mathbb{R}^p\;\bigl(\text{resp.}\;V\subset\mathbb{R}^q\bigr)$, where $\mathcal{U}(t,T)\;\bigl(\text{resp.}\;\mathcal{V}(t,T)\bigl)$ is the space of all impulse controls $u\;(\text{resp.}\;v)$. \qed
\end{enumerate}
\end{definition}
We denote, for notational brevity, $\Theta=\Theta(t,T)$, $\mathcal{U}=\mathcal{U}(t,T)$, $\mathcal{V}=\mathcal{V}(t,T)$ and $\Psi=\Theta\times\mathcal{U}$. In the system (S), the product $\prod_{k\geq 1}\ind_{\{\tau_m\neq\rho_k\}}$ signifies that when the two players act together on the system at the same time, only the action of minimizing player$-\eta$ is tacking into account.
Assumptions on the data, related to system (S), were given in Section \ref{Sect.1}.
\begin{remark}
By assumption $\textbf{H}_{b}$ the existence of a unique global solution of the above dynamical system (S) is guaranteed and will be denoted by $y_{t,x}^{\psi,v}(s)$ at time $s$, for $\psi:=\bigl(\theta(.),u\bigr)\in\Psi$ and $v\in\mathcal{V}$. \qed
\end{remark}
The gain (resp. cost) functional $J$ for maximizing player$-\xi$ (resp. minimizing player$-\eta$) is defined, for $\psi:=\bigl(\theta(.),u\bigr)\in\Psi$ and $v\in$$\mathcal{V}$ being the admissible controls for the two players, as follows:
\begin{equation*}
\begin{aligned}
J(t,x;\psi,v):=&\int_t^T f\bigl(s,y_{t,x}^{\psi,v}(s);\theta(s)\bigr)\exp\bigl(-\lambda(s-t)\bigr)ds\\
&-\sum_{m\geq 1}c\bigl(\tau_m,y_{t,x}^{\psi,v}(\tau_m^-);\xi_m\bigr)\exp\bigl(-\lambda(\tau_m-t)\bigr)\ind_{\{\tau_m\leq T\}}\prod_{k\geq 1}\ind_{\{\tau_m\neq\rho_k\}}\\
&+\sum_{k\geq 1}\chi\bigl(\rho_k,y_{t,x}^{\psi,v}(\rho_k^-);\eta_k\bigr)\exp\bigl(-\lambda(\rho_k-t)\bigr)\ind_{\{\rho_k\leq T\}}\\
&+G\bigl(y_{t,x}^{\psi,v}(T)\bigr)\exp\bigl(-\lambda(T-t)\bigr),
\end{aligned}
\end{equation*}
where $y_{t,x}^{\psi,v}(s)$ is the \textit{response} to controls $\psi$ and $v$ at time $s$. The functional $J$ will be considered under the following classical assumptions on the given \textit{running gain/cost} function $f$, \textit{impulse cost} functions $c,\chi$, and \textit{terminal gain} $G$, where $\lambda$ is a fixed positive real that represents the \textit{discount factor}:
\begin{itemize}
\item[\textbf{[$\textbf{H}_f$]}] \textbf{(Running Gain)} We assume that the function $f:[0,+\infty)\times\mathbb{R}^n\times\mathbb{R}^l\rightarrow\mathbb{R}$ is continuous w.r.t. $s$ uniformly in $y$ and $\theta$, Lipschitz-continuous w.r.t. $y$ uniformly in $s$ and $\theta$ with constant $C_f>0$, and continuous w.r.t. $\theta$. Moreover, $f$ satisfies $\bigl\|f(s,y;\theta)\bigr\|_{\infty}\leq M$ for any $(s,y,\theta)\in[0,+\infty)\times\mathbb{R}^n\times\mathbb{R}^l$ and some positive constant $M$;
\item[\textbf{[$\textbf{H}_{c,\chi}$]}] \textbf{(Impulses Cost)} The impulse cost functions $c:[0,+\infty)\times\mathbb{R}^n\times U\subset\mathbb{R}^p\rightarrow\mathbb{R}_{+}^*$ and $\chi:[0,+\infty)\times\mathbb{R}^n\times V\subset\mathbb{R}^q\rightarrow\mathbb{R}_{+}^*$ are from $[0,+\infty)\times\mathbb{R}^n$ and two convex cones $U$ and $V$, respectively, into $\mathbb{R}_{+}^*$, non negative, and satisfy
$$\inf_{(s,y,\xi)\in[0,+\infty)\times\mathbb{R}^n\times U}c(s,y;\xi)> 0,\;\text{and}\;\inf_{(s,y,\eta)\in[0,+\infty)\times\mathbb{R}^n\times V}\chi(s,y;\eta)>0.$$
The function $c$ (resp. $\chi$) is Lipschitz-continuous w.r.t. $y$, uniformly in $s$ and $\xi$ (resp. $\eta$), with constant $C_{c}>0$ (resp. $C_{\chi}>0$) and continuous w.r.t. $s$ and $\xi$ (resp. $\eta$). Moreover, for all $(s,y)\in[0,+\infty)\times\mathbb{R}^n$, $\xi_1,\xi_2\in U$ and $\eta_1,\eta_2\in V$, we let the impulse cost functions satisfy the following:
\begin{equation*}
\left\{
\begin{aligned}
c(s,y;\xi_1+\xi_2)&\leq c(s,y;\xi_1)+c(s,y;\xi_2);\\
\chi(s,y;\eta_1+\eta_2)&\leq\chi(s,y;\eta_1)+\chi(s,y;\eta_2),
\end{aligned}
\right.
\end{equation*}
that is multiple impulses occurring at the same time are sub-optimal;
\item[\textbf{[$\textbf{H}_{G}$]}] \textbf{(Terminal Gain)} We let the function $G:\mathbb{R}^n\rightarrow\mathbb{R}$ be bounded, Lipschitz-continuous with constant $C_G>0$ and satisfies, for all $y\in\mathbb{R}^n$ at time $T$, the following \textit{no terminal impulse} condition:
$$\sup_{\xi\in U}\Bigl\{G\bigl(y+g_\xi(T,y;\xi)\bigr)-c(T,y;\xi)\Bigr\}\leq G(y)\leq\inf_{\eta\in V}\Bigl\{G\bigl(y+g_\eta(T,y;\eta)\bigr)+\chi(T,y;\eta)\Bigr\}.$$
\end{itemize}
\par Note that the functional $J$ represents a gain for the maximizing player and a cost for the minimizing, it is the criterion which player$-\xi$ wants to maximize and player$-\eta$ wants to minimize. In the other words, $-J$ is the cost player$-\eta$ has to pay, so the sum of the costs of the two players is null, which explains the name \textit{zero-sum}.
\begin{remark}
Assumptions $\textbf{H}_{g}$, $\textbf{H}_{f}$, $\textbf{H}_{c,\chi}$ and $\textbf{H}_{G}$ provide the classical framework for the study, in the VS framework, of the zero-sum DG control problem considered in this paper. In the rest of the paper:
\begin{enumerate}
\item We let $n$, $p$, $q$ and $l$ be some fixed positive integers, $k,m\in\mathbb{N}^*$, $T\in(0,+\infty)$, $t\in[0,T]$ and $s\in[t,T]$;
\item We denote by $|.|$ and $\|.\|$ the Euclidean vector norm in $\mathbb{R}$ and $\mathbb{R}^n$, respectively, and by $\|.\|_\infty$ the infinite norm in the space of bounded continuous functions. \qed
\end{enumerate}
\end{remark}
\par Before moving to the notions of \textit{non-anticipative strategy} and \textit{value function}, we first give the following Proposition \ref{Proposition2.1}:
\begin{proposition}[Estimates on the Trajectories]\label{Proposition2.1}
Assume $\textbf{H}_b$ and $\textbf{H}_g$. Then we have, for all $x,x^\prime\in\mathbb{R}^n$, $t\in[0,T]$ and $t^\prime\in[t,T]$, the following estimates on the trajectories:
\begin{enumerate}[i.]
\item $\bigl\|y_{t,x}^{\psi,v}(s)-x\bigl\|\leq M(s-t)\;\text{for any}\;s\in[t,T];$
\item $\bigl\|y_{t^\prime,x^\prime}^{\psi,v}(s)-y_{t,x}^{\psi,v}(s)\bigl\|\leq\exp\bigl(C(T-t^\prime)\bigr)\bigl(\|x^\prime-x\|+M(t^\prime-t)\bigr)\;\text{for any}\;s\in[t^\prime,T],$
\end{enumerate}
for all $\psi:=\bigl(\theta(.),u\bigr)\in\Psi$ and $v\in\mathcal{V}$, where $C$ and $M$ are two real positive constants.
\end{proposition}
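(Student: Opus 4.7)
The plan is to establish both estimates by combining the integral form of the continuous dynamics with the synchronized nature of the impulse times: since the two trajectories $y_{t^\prime,x^\prime}^{\psi,v}$ and $y_{t,x}^{\psi,v}$ are driven by the \emph{same} strategies $\psi=(\theta(.),u)$ and $v$, their jumps occur at the identical times $\tau_m$ and $\rho_k$, and the Lipschitz properties of $g_\xi,g_\eta$ will allow us to propagate the bound across each jump.

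For part (i), I would start by writing the continuous piece of the trajectory in integral form: on any subinterval $[s_1,s_2]\subset[t,T]$ containing no impulse time, $y_{t,x}^{\psi,v}(s_2)-y_{t,x}^{\psi,v}(s_1)=\int_{s_1}^{s_2}b\bigl(r,y_{t,x}^{\psi,v}(r);\theta(r)\bigr)dr$, whose norm is bounded by $M(s_2-s_1)$ thanks to $\bigl\|b\bigr\|_\infty\leq M$ in $\textbf{H}_b$. Summing these contributions between consecutive impulse instants and then across the finitely many impulses in $[t,s]$ (treating each jump contribution by Lipschitz control in $y$ via $\textbf{H}_g$) yields the telescoping estimate $\bigl\|y_{t,x}^{\psi,v}(s)-x\bigr\|\leq M(s-t)$ for the state component along the continuous motion.

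For part (ii), I would compare the two responses pointwise. Setting $\Delta(s):=\bigl\|y_{t^\prime,x^\prime}^{\psi,v}(s)-y_{t,x}^{\psi,v}(s)\bigr\|$ for $s\in[t^\prime,T]$, I first control the initial deviation at $s=t^\prime$ by writing $y_{t,x}^{\psi,v}(t^\prime)-x^\prime=\bigl(y_{t,x}^{\psi,v}(t^\prime)-x\bigr)+(x-x^\prime)$ and applying (i) on $[t,t^\prime]$, which gives $\Delta(t^\prime)\leq\|x^\prime-x\|+M(t^\prime-t)$. Between consecutive impulse times, the difference satisfies $\Delta(s)\leq\Delta(s_1)+C_b\int_{s_1}^{s}\Delta(r)dr$ by the Lipschitz property of $b$ in $y$ from $\textbf{H}_b$. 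At each impulse time $\tau_m$ or $\rho_k$ in $[t^\prime,T]$, the Lipschitz property of $g_\xi$ or $g_\eta$ from $\textbf{H}_g$ gives $\Delta(\tau_m^+)\leq(1+C_{g_\xi})\Delta(\tau_m^-)$ (and analogously for $\rho_k$). Applying the continuous Gronwall inequality on each interjump interval and then iterating the discrete multiplicative jump factors, one packages everything into a single exponential $\exp\bigl(C(T-t^\prime)\bigr)$ where $C$ absorbs $C_b$ and the Lipschitz constants of the impulse maps; this yields the stated inequality.

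The main obstacle is precisely the jump mechanism: the trajectory is only piecewise continuous, so the classical Gronwall lemma does not apply directly on $[t^\prime,T]$. The key observation that makes the argument work is that the two trajectories use the \emph{same} impulse control sequences, so their jump times coincide and the discrepancy introduced at a jump is purely of Lipschitz type in the pre-jump state, producing a \emph{multiplicative} factor that can be merged with the continuous Gronwall factor into a single exponential bound (this is why the constant $C$ in the statement is allowed to depend on both $C_b$ and the $C_{g_\bullet}$'s, but not on the specific controls).
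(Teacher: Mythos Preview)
The paper gives no proof beyond the one line ``The proof of this result is classic'', so there is no detailed argument to compare against; your Gronwall-with-jumps outline for part (ii)---same impulse times for both trajectories, Lipschitz propagation across each jump, continuous Gronwall in between---is exactly the standard argument the authors have in mind.

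There is, however, a real gap in your treatment of part (i). The inequality $\bigl\|y_{t,x}^{\psi,v}(s)-x\bigr\|\leq M(s-t)$ cannot be obtained by ``treating each jump contribution by Lipschitz control in $y$ via $\textbf{H}_g$'': hypothesis $\textbf{H}_g$ only gives Lipschitz regularity of $g_\xi,g_\eta$ in $s$ and $y$, not smallness, so a single impulse at some $\tau_1\in(t,s)$ adds a term $g_\xi\bigl(\tau_1,y(\tau_1^-);\xi_1\bigr)$ of fixed size that does not vanish with $s-t$. Your integral bound via $\|b\|_\infty\leq M$ is correct on any impulse-free subinterval, and in fact that is the only way the paper ever uses (i): in the proofs of Lemma~\ref{Lemma3.2} and Theorem~\ref{theorem3.1} one explicitly restricts to $t'-t<\tau_0\wedge\rho_0$ before invoking it. So the honest content of (i) is the classical ODE estimate on intervals free of impulses, and that is precisely what your argument establishes. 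A related caveat applies to (ii): absorbing the discrete factors $(1+C_{g_\xi})$, $(1+C_{g_\eta})$ into a single $\exp\bigl(C(T-t')\bigr)$ with $C$ independent of $(\psi,v)$ requires an a priori bound on the number of impulses in $[t',T]$; this does not follow from $\textbf{H}_b$ and $\textbf{H}_g$ alone but, in this paper, from the strict positivity of the impulse costs in $\textbf{H}_{c,\chi}$ combined with the boundedness of the value functional.
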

\begin{proof}
The proof of this result is classic.
\end{proof}
We now assume that one player knows just the current and past choices of the control made by his opponent. Thus, following Elliott and Kalton \cite{EK72,EK74}, we are given an information pattern for the two players by introducing the notion of \textit{non-anticipative strategy} for our zero-sum DG control problem (see also Evans and Souganidis \cite{ES84}) as follows:
\begin{definition}[Non-Anticipative Strategy]
A strategy for player$-\xi$ is a map $\alpha:\mathcal{V}\rightarrow\Psi$; it is non-anticipative, if, for any $v_1,v_2\in\mathcal{V},\;T>0\;\text{and}\;t\in[0,T]$, $v_1\equiv v_2$ on $[t,T]$ implies $\alpha(v_1)\equiv\alpha(v_2)$ on $[t,T]$, i.e., if $\alpha(v_1):=\bigl(\theta_1(.),u_1\bigr)\in\Psi$ and $\alpha(v_2):=\bigl(\theta_2(.),u_2\bigr)\in\Psi$ with $v_1\equiv v_2$ then $\theta_1(s)=\theta_2(s)$ and $u_1\equiv u_2$ for any $t\leq s\leq T$. We denote by $\mathcal{A}$ the set of all non-anticipative strategies $\alpha$ for player$-\xi$.\\
Similarly, the set of all non-anticipative strategies $\beta$ for player$-\eta$ is denoted by $\mathcal{B}$ as
\begin{equation*}
\begin{aligned}
\mathcal{B}:=\Bigl\{\beta:&\;\Psi\rightarrow\mathcal{V}\;\Bigl|\;\theta_1(s)=\theta_2(s)\;\text{and}\;u_1\equiv u_2\;\text{on}\;[t,T]\;\text{for all}\;\theta_1(.),\theta_2(.)\in\Theta,\;u_1,u_2\in\mathcal{U},\\
&T>0,\;t\in[0,T]\;\text{implies}\;v_1:=\beta\bigl(\theta_1(s),u_1\bigr)\equiv v_2:=\beta\bigl(\theta_2(s),u_2\bigr)\;\text{on}\;[t,T]\;\text{for all}\;t\leq s\leq T\Bigr\}.
\end{aligned}
\end{equation*}\qed
\end{definition}
We then give the definitions of the \textit{lower} and the \textit{upper} value functions related to our problem.
\begin{definition}[Value Functions]
The definitions of the lower value function ($V^-$) and the upper value function ($V^+$) of the zero-sum DG control problem with the gain/cost functional $J:[0,T]\times\mathbb{R}^n\times\Psi\times\mathcal{V}\rightarrow\mathbb{R}$, related to system (S), are given by the following expressions:
\begin{equation*}
\begin{aligned}
V^-(t,x)&:=\inf_{\beta\in\mathcal{B}}\sup_{\psi\in\Psi}J\bigl(t,x;\psi,\beta(\psi)\bigr);\\
V^+(t,x)&:=\sup_{\alpha\in\mathcal{A}}\inf_{v\in\mathcal{V}}J\bigl(t,x;\alpha(v),v\bigr).
\end{aligned}
\end{equation*}
If $V^-(t,x)=V^+(t,x)$ we say that the game, with initial point $x\in\mathbb{R}^n$ at initial time $t\in[0,T]$, has a value. We denote the value function of the zero-sum DG control problem by:
\begin{equation*}
V(t,x):=V^-(t,x)=V^+(t,x).
\end{equation*}\qed
\end{definition}
Next, we give some properties concerning the value functions.
\begin{proposition}[Boundedness]\label{Proposition2.2}
Assume $\textbf{H}_b$, $\textbf{H}_g$, $\textbf{H}_f$, $\textbf{H}_{c,\chi}$ and $\textbf{H}_{G}$. Then the lower value and the upper value are bounded in $[0,T]\times\mathbb{R}^n$.
\end{proposition}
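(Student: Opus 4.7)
The plan is to produce explicit two-sided uniform bounds on $V^-$ and $V^+$ by exhibiting trivial admissible strategies on each side and then exploiting the monotonicity of $\sup$ and $\inf$ together with the positivity of the impulse costs $c$ and $\chi$ guaranteed by $\textbf{H}_{c,\chi}$. The rough idea is that, by letting one player perform no impulses at all, the corresponding impulse-cost sum in $J$ vanishes, while the opponent's impulse-cost sum can be discarded because it appears in $J$ with a sign compatible with the inequality we want.

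First I would handle the upper bound on $V^-$. Introduce the constant non-anticipative strategy $\beta_0\in\mathcal{B}$ defined by $\beta_0(\psi):=\emptyset$ (the empty impulse sequence) for every $\psi\in\Psi$; non-anticipativeness is trivial. For this strategy the $\chi$-sum in $J$ is zero, and the $c$-sum contributes non-positively since $c>0$ by $\textbf{H}_{c,\chi}$; hence dropping the latter and using $\|f\|_\infty\leq M$ from $\textbf{H}_f$ together with boundedness of $G$ from $\textbf{H}_G$ yields $J(t,x;\psi,\beta_0(\psi))\leq MT+\|G\|_\infty$ uniformly in $\psi$, so that $V^-(t,x)\leq MT+\|G\|_\infty$. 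For the lower bound on $V^-$, I would test $\sup_\psi J(t,x;\psi,\beta(\psi))$ at the specific point $\psi_0:=(\theta\equiv 0,\emptyset)$: the $c$-sum vanishes by construction, and since $\chi\geq 0$ by $\textbf{H}_{c,\chi}$ the $\chi$-sum can be dropped, yielding $J(t,x;\psi_0,\beta(\psi_0))\geq -MT-\|G\|_\infty$ uniformly in $\beta\in\mathcal{B}$, and therefore $V^-(t,x)\geq -MT-\|G\|_\infty$.

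The argument for $V^+$ is completely symmetric: to bound $V^+$ from below I would use the constant non-anticipative strategy $\alpha_0(v):=(\theta\equiv 0,\emptyset)$ for every $v\in\mathcal{V}$; to bound $V^+$ from above I would evaluate $\inf_v J(t,x;\alpha(v),v)$ at $v=\emptyset$ for an arbitrary $\alpha\in\mathcal{A}$. In each case, one of the impulse-cost sums vanishes by construction and the other is discarded using the appropriate sign of $c$ or $\chi$, and the remaining terms are controlled by the uniform $L^\infty$ estimates in $\textbf{H}_f$ and $\textbf{H}_G$.

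This is essentially a bookkeeping argument and I do not expect any real obstacle; the only point worth verifying carefully is the sign matching between the impulse-cost sums in $J$ and the direction of the inequality we are after. Because $c$ and $\chi$ enter $J$ with opposite signs, the choice of which player's strategy to trivialise for each bound is forced, but once this is set the bounds follow immediately. Note in particular that no use is made of Proposition \ref{Proposition2.1} here, since the bounds on $f$ and $G$ are already uniform in $y$.
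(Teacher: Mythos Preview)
Your argument is correct and is precisely the classical one: the paper itself does not spell out a proof but simply writes ``The proof of this result is classic, see e.g.\ \cite{BBEl10}'', and what you have written is exactly the standard trivial-strategy argument that reference would supply. The only cosmetic point is that, in this paper's conventions, the ``empty'' impulse control is realised by pushing all impulse times to $T$ (cf.\ the construction in Section~\ref{Sect.5}), rather than by a literal empty sequence; this does not affect the bounds since the no-terminal-impulse condition in $\textbf{H}_G$ ensures terminal impulses are harmless.
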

\begin{proof}
The proof of this result is classic, see e.g. \cite{BBEl10}.
\end{proof}
\begin{proposition}[Time-Continuity]\label{Proposition2.3}
Assume $\textbf{H}_b$, $\textbf{H}_g$, $\textbf{H}_f$, $\textbf{H}_{c,\chi}$ and $\textbf{H}_{G}$. Then the lower value and the upper value are continuous with respect to time variable.
\end{proposition}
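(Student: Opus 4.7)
The plan is to establish a one-sided modulus bound $|V^-(t,x)-V^-(t',x)|\le\omega(t'-t)$ for $0\le t<t'\le T$ and some modulus $\omega$ via direct comparison of the gain functionals, the case of $V^+$ following symmetrically. The core ingredients will be the trajectory estimates of Proposition \ref{Proposition2.1} together with the Lipschitz and boundedness hypotheses $\textbf{H}_f$, $\textbf{H}_{c,\chi}$ and $\textbf{H}_G$.

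For any $(\psi',v')\in\Psi(t',T)\times\mathcal{V}(t',T)$, I would extend it to $(\tilde\psi,\tilde v)\in\Psi(t,T)\times\mathcal{V}(t,T)$ by placing no impulses and any fixed continuous control on $[t,t')$ and copying $(\psi',v')$ on $[t',T]$. Given an $\epsilon$-optimal $\beta^\epsilon\in\mathcal{B}(t,T)$ for $V^-(t,x)$, I would then define $\tilde\beta\in\mathcal{B}(t',T)$ by $\tilde\beta(\psi'):=\beta^\epsilon(\tilde\psi)|_{[t',T]}$, discarding any impulses placed on $[t,t')$; non-anticipativity of $\tilde\beta$ is inherited from that of $\beta^\epsilon$ because the extension rule is data-independent. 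Since $\tilde v$ carries no impulses on $[t,t')$, Proposition \ref{Proposition2.1}(i) yields $\|y_{t,x}^{\tilde\psi,\tilde v}(t')-x\|\le M(t'-t)$, and then Proposition \ref{Proposition2.1}(ii), applied from initial time $t'$ under the common controls $(\psi',v')$ and with the two initial states $y_{t,x}^{\tilde\psi,\tilde v}(t')$ and $x$, yields
\[
\|y_{t,x}^{\tilde\psi,\tilde v}(s)-y_{t',x}^{\psi',v'}(s)\|\le \exp(CT)\,M(t'-t),\qquad s\in[t',T].
\]

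Next, I would bound $J(t,x;\tilde\psi,\tilde v)-J(t',x;\psi',v')$ term by term. The running integral splits as $\int_t^{t'}+\int_{t'}^T$: the first piece is $O(t'-t)$ by boundedness of $f$ in $\textbf{H}_f$, the second uses the $C_f$-Lipschitz bound on $f$ in $y$, the trajectory estimate above, and the elementary inequality $|\exp(-\lambda(s-t))-\exp(-\lambda(s-t'))|\le \lambda(t'-t)$. The two impulse sums on $[t',T]$ are indexed by the same $(\tau_m,\xi_m)$ and $(\rho_k,\eta_k)$, so their difference is controlled by $\textbf{H}_{c,\chi}$ together with the same trajectory and discount-factor estimates; the terminal term is treated via $C_G$-Lipschitzness of $G$ in $\textbf{H}_G$. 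Assembling these pieces produces $|J(t,x;\tilde\psi,\tilde v)-J(t',x;\psi',v')|\le\omega(t'-t)$ uniformly in $(\psi',v')$, and taking $\sup_{\psi'}$ gives $V^-(t',x)\le V^-(t,x)+\epsilon+\omega(t'-t)$; the opposite inequality follows by the symmetric construction starting from an $\epsilon$-optimal strategy in $\mathcal{B}(t',T)$ extended to $[t,T]$. Sending $\epsilon\to 0$ and then $t'\to t$ delivers the claim.

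The main obstacle will lie in the term-by-term step, specifically controlling the two impulse-cost sums uniformly across the admissible class, since these sums can a priori diverge. I would combine the strict positivity $\inf c>0$ and $\inf\chi>0$ from $\textbf{H}_{c,\chi}$ with the boundedness of $V^\pm$ from Proposition \ref{Proposition2.2} to argue that $\epsilon$-optimal strategies for either player can, without loss of generality, be assumed to carry at most a bounded number of impulses, so that the sums are finite and their difference admits a modulus bound in $t'-t$. The secondary technicality is the bookkeeping needed to preserve non-anticipativity under restriction to $[t',T]$ and trivial extension to $[t,t')$, which is handled precisely by the data-independent character of the extension rule described above.
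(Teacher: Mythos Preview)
The paper does not give a proof of this proposition; it simply cites it as classical and refers to \cite{BBEl10}. Your direct approach---extending/restricting controls between $[t,T]$ and $[t',T]$, comparing trajectories via Proposition~\ref{Proposition2.1}, and bounding the impulse sums using Proposition~\ref{Proposition2.2} together with $\inf c,\inf\chi>0$---is the standard one and is in the spirit of the reference the paper invokes.

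There is, however, a genuine bookkeeping gap in your writeup. You set up the comparison between $J\bigl(t',x;\psi',\tilde\beta(\psi')\bigr)$ and $J\bigl(t,x;\tilde\psi,\beta^\varepsilon(\tilde\psi)\bigr)$, but when you invoke Proposition~\ref{Proposition2.1}(i) you apply it to the pair $(\tilde\psi,\tilde v)$, where $\tilde v$ is the trivial extension of $v'$ carrying no impulses on $[t,t')$. The relevant response on $[t,T]$ is $\beta^\varepsilon(\tilde\psi)$, not $\tilde v$, and nothing prevents $\beta^\varepsilon(\tilde\psi)$ from placing impulses on $[t,t')$. If it does, the state $y_{t,x}^{\tilde\psi,\beta^\varepsilon(\tilde\psi)}(t')$ need not be within $M(t'-t)$ of $x$, and your trajectory comparison on $[t',T]$ (which matches that state against the initial state $x$ of $y_{t',x}^{\psi',\tilde\beta(\psi')}$) breaks down. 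Discarding those impulses when you pass to $\tilde\beta$ removes the $+\chi$ terms from the left-hand side but does not undo the displacement already baked into the right-hand trajectory.

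The repair is not difficult but must be made explicit. One clean route: show first that, at cost $O(\varepsilon)$, one may replace $\beta^\varepsilon$ by a strategy that places no impulse on $[t,t')$. Indeed, any impulse by player~$\eta$ on $[t,t')$ contributes at least $\inf\chi>0$ to $J$, whereas the benefit it can confer (through the altered trajectory on $[t',T]$) is bounded uniformly via the Lipschitz data and the bounded number of subsequent impulses; for $t'-t$ small the impulse is therefore suboptimal for the minimizer. With this modification the rest of your argument goes through. Alternatively, if you take Proposition~\ref{Proposition2.1}(i) literally as stated (for all $\psi,v$), you are implicitly assuming a smallness of the jump maps $g_\xi,g_\eta$ that is not part of $\textbf{H}_g$; you should flag this.
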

\begin{proof}
The proof of this result is classic, see e.g. \cite{BBEl10}.
\end{proof}
The next section is devoted to announcing some regularity results for the value functions with respect to the state variable.
\subsection{Dynamic Programming Principle and Regularity Results}\label{Sect.2.2}
We first give the Bellman's \cite{B57} DPP for the two-player zero-sum DG control problem considered:
\begin{theorem}[Dynamic Programming Principle]\label{DPP}
Assume $\textbf{H}_b$, $\textbf{H}_g$, $\textbf{H}_f$, $\textbf{H}_{c,\chi}$ and $\textbf{H}_{G}$. For all $x\in\mathbb{R}^n$ and $t^\prime\in[t,T]$, the lower value and the upper value satisfy, respectively,
\begin{equation}\label{equation_2.1}
\begin{aligned}
V^-(t,x)=&\inf_{\beta\in\mathcal{B}}\sup_{\psi\in\Psi}\biggl\{\int_t^{t^\prime} f\bigl(s,y_{t,x}^{\psi,\beta(\psi)}(s);\theta(s)\bigr)\exp\bigl(-\lambda(s-t)\bigr)ds\\
&-\sum_{m\geq 1}c\bigl(\tau_m,y_{t,x}^{\psi,\beta(\psi)}(\tau_m^-);\xi_m\bigr)\exp\bigl(-\lambda(\tau_m-t)\bigr)\ind_{\{\tau_m<t^\prime\}}\prod_{k\geq 1}\ind_{\{\tau_m\neq\rho_k\}}\\
&+\sum_{k\geq 1}\chi\bigl(\rho_k,y_{t,x}^{\psi,\beta(\psi)}(\rho_k^-);\eta_k\bigr)\exp\bigl(-\lambda(\rho_k-t)\bigr)\ind_{\{\rho_k<t^\prime\}}\\
&+V^-\bigl(t^\prime,y_{t,x}^{\psi,\beta(\psi)}(t^\prime)\bigr)\exp\bigl(-\lambda(t^\prime-t)\bigr)\biggr\},
\end{aligned}
\end{equation}
and
\begin{equation*}
\begin{aligned}
V^+(t,x)=&\sup_{\alpha\in\mathcal{A}}\inf_{v\in\mathcal{V}}\biggl\{\int_t^{t^\prime} f\bigl(s,y_{t,x}^{\alpha(v),v}(s);\theta(s)\bigr)\exp\bigl(-\lambda(s-t)\bigr)ds\\
&-\sum_{m\geq 1}c\bigl(\tau_m,y_{t,x}^{\alpha(v),v}(\tau_m^-);\xi_m\bigr)\exp\bigl(-\lambda(\tau_m-t)\bigr)\ind_{\{\tau_m<t^\prime\}}\prod_{k\geq 1}\ind_{\{\tau_m\neq\rho_k\}}\\
&+\sum_{k\geq 1}\chi\bigl(\rho_k,y_{t,x}^{\alpha(v),v}(\rho_k^-);\eta_k\bigr)\exp\bigl(-\lambda(\rho_k-t)\bigr)\ind_{\{\rho_k<t^\prime\}}\\
&+V^+\bigl(t^\prime,y_{t,x}^{\alpha(v),v}(t^\prime)\bigr)\exp\bigl(-\lambda(t^\prime-t)\bigr)\biggr\}.
\end{aligned}
\end{equation*}
\end{theorem}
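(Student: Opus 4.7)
The plan is to prove the DPP by establishing each equality through two opposite inequalities; I argue for $V^-$, the case of $V^+$ being entirely symmetric (swap $\mathcal{A}\leftrightarrow\mathcal{B}$ and $\inf\leftrightarrow\sup$). Fix $t\in[0,T]$, $t'\in[t,T]$ and $x\in\mathbb{R}^n$, and write $W^-(t,x)$ for the right-hand side of \eqref{equation_2.1}.

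For the inequality $V^-(t,x)\le W^-(t,x)$, I would fix $\epsilon>0$ and a non-anticipative strategy $\beta_1\in\mathcal{B}(t,t')$. The definition of $V^-(t',y)$ as an infimum provides, for each $y\in\mathbb{R}^n$, a strategy $\beta^y\in\mathcal{B}(t',T)$ with $\sup_{\psi'}J(t',y;\psi',\beta^y(\psi'))\le V^-(t',y)+\epsilon$. Measurable dependence $y\mapsto\beta^y$ is produced by the standard partition argument: cover $\mathbb{R}^n$ by a locally finite family of balls of radius $\delta$, choose a representative $y_i$ in each, and set $\beta^y:=\beta^{y_i}$ on the $i$-th cell; the Lipschitz hypotheses $\textbf{H}_f$, $\textbf{H}_{c,\chi}$, $\textbf{H}_G$, combined with Proposition~\ref{Proposition2.1}, bound the resulting discretization error by $O(\delta)$. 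I then concatenate, defining $\tilde\beta\in\mathcal{B}$ by
$$\tilde\beta(\psi)(s)=\begin{cases}\beta_1(\psi|_{[t,t']})(s),& s\in[t,t'],\\ \beta^{y_{t,x}^{\psi,\beta_1(\psi)}(t')}(\psi|_{(t',T]})(s),& s\in(t',T].\end{cases}$$
Non-anticipativity of $\tilde\beta$ on $[t,T]$ follows because the split occurs at the deterministic time $t'$, both pieces are non-anticipative on their subintervals, and the state $y_{t,x}^{\psi,\beta_1(\psi)}(t')$ depends only on $\psi|_{[t,t']}$. Splitting the integral and impulse sums of $J(t,x;\psi,\tilde\beta(\psi))$ at $t'$, applying the $\epsilon$-optimality of $\beta^y$, taking $\sup_\psi$ then $\inf_{\beta_1}$, and letting $\delta,\epsilon\to 0$ yields the desired inequality.

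For the reverse inequality $V^-(t,x)\ge W^-(t,x)$, let $\beta\in\mathcal{B}$ be arbitrary. By non-anticipativity, $\beta(\cdot)|_{[t,t']}$ depends only on $\psi|_{[t,t']}$, hence defines $\beta_1\in\mathcal{B}(t,t')$; moreover, for each fixed $\psi_1\in\Psi(t,t')$, the map $\psi_2\in\Psi(t',T)\mapsto\beta(\psi_1,\psi_2)|_{(t',T]}$ is a non-anticipative strategy on $[t',T]$. Splitting $J$ at $t'$, taking $\sup_{\psi_2}$ inside, and using the definition of $V^-(t',\cdot)$ as an infimum over non-anticipative tail strategies, I obtain
$$\sup_{\psi}J(t,x;\psi,\beta(\psi))\ge\sup_{\psi_1}\Big\{J_1(t,x;\psi_1,\beta_1(\psi_1))+e^{-\lambda(t'-t)}V^-\big(t',y_{t,x}^{\psi_1,\beta_1(\psi_1)}(t')\big)\Big\},$$
where $J_1$ collects the running integral and impulse-sum contributions on $[t,t']$. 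Since the right-hand side depends on $\beta$ only through $\beta_1$, passing to the infimum over $\beta\in\mathcal{B}$ (which reduces to an infimum over $\beta_1\in\mathcal{B}(t,t')$) yields $V^-(t,x)\ge W^-(t,x)$.

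The main technical obstacle is the concatenation argument in the presence of impulses: one must verify that cutting the impulse sequences $(\tau_m,\xi_m)$ and $(\rho_k,\eta_k)$ at the deterministic time $t'$ produces admissible pieces on each subinterval, that the product indicator $\prod_{k}\ind_{\{\tau_m\neq\rho_k\}}$ is preserved under the split, and that inserting the state $y_{t,x}^{\psi,\beta_1(\psi)}(t')$ into the piecewise measurable selection $y\mapsto\beta^y$ still yields a non-anticipative strategy. The strict inequalities $\{\tau_m<t'\}$, $\{\rho_k<t'\}$ in the statement resolve the boundary ambiguity by assigning any impulse occurring exactly at $t'$ to the tail piece absorbed into $V^-(t',\cdot)$, so that the splitting of $J$ at $t'$ is unambiguous.
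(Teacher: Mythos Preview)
Your proposal is correct and follows essentially the same route as the paper's proof: both directions are obtained by concatenating (respectively, restricting) near-optimal non-anticipative strategies at the deterministic time $t'$, then splitting the functional $J$ accordingly and letting the $\varepsilon$-errors vanish. The paper also treats only $V^-$ explicitly and declares $V^+$ symmetric.

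The only noticeable difference is your invocation of a partition/measurable-selection argument to build $y\mapsto\beta^y$. In this deterministic Elliott--Kalton setting, no measurability of the selection is required: a non-anticipative strategy is simply a map $\Psi\to\mathcal{V}$, and the paper just picks, for each $z$, some $\beta_z$ satisfying $\sup_\psi J(s,z;\psi,\beta_z(\psi))\le V^-(s,z)+\varepsilon$ and concatenates directly. Your partition step introduces an additional $O(\delta)$ error that you then send to zero; this is harmless but superfluous. For the reverse inequality, your argument (restrict an arbitrary $\beta\in\mathcal{B}$ to $[t,t']$, view the tail as a strategy on $[t',T]$, and compare with the infimum defining $V^-(t',\cdot)$) is in fact a bit cleaner than the paper's, which proceeds by picking a near-optimal $\overline{\psi}$ for the inner $\sup$ of $W_{t'}$ and then building a concatenated control $\tilde{\psi}$; both arrive at the same chain of inequalities.
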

\begin{proof}
This proof is inspired by the results in chapter VIII of reference \cite{BC97} (see also \cite{EL21}). We give only the proof for the lower value $V^-$, similarly for the upper value $V^+$. Let $T>0$, $t\in[0,T]$ and $t^\prime\in[t,T]$, fix $\varepsilon>0$ and denote by $W_{t^\prime}(t,x)$ the right-hand side of equation (\ref{equation_2.1}). We first prove that $V^-(t,x)\leq W_{t^\prime}(t,x)$. For any $(s,z)\in[t,T]\times\mathbb{R}^n$ we pick a non-anticipative strategy $\beta_{z}\in\mathcal{B}$ for player$-\eta$ such that
\begin{equation}\label{equation_2.2}
V^-(s,z)\geq \sup_{\psi\in\Psi}J\bigl(s,z;\psi,\beta_{z}(\psi)\bigr)-\varepsilon,
\end{equation}
where $\psi:=\bigl(\theta(.),u\bigr)\in\Psi$. Then we choose $\overline{\beta}\in\mathcal{B}$ a non-anticipative strategy for player$-\eta$ that satisfies, for $u:=(\tau_m,\xi_m)_{m\in\mathbb{N}^*}$, the following inequality:
\begin{equation}\label{equation_2.3}
\begin{aligned}
W_{t^\prime}(t,x)\geq& \sup_{\psi\in\Psi}\biggl\{\int_t^{t^\prime}f\bigl(s,y_{t,x}^{\psi,\overline{\beta}(\psi)}(s);\theta(s)\bigr)\exp\bigl(-\lambda(s-t)\bigr)ds\\
&-\sum_{m\geq 1}c\bigl(\tau_m,y_{t,x}^{\psi,\overline{\beta}(\psi)}(\tau_m^-);\xi_m\bigr)\exp\bigl(-\lambda(\tau_m-t)\bigr)\ind_{\{\tau_m<t^\prime\}}\prod_{k\geq 1}\ind_{\{\tau_m\neq\overline{\rho_k}\}}\\
&+\sum_{k\geq 1}\chi\bigl(\rho_k,y_{t,x}^{\psi,\overline{\beta}(\psi)}(\overline{\rho_k}^-);\overline{\eta_k}\bigr)\exp\bigl(-\lambda(\overline{\rho_k}-t)\bigr)\ind_{\{\rho_k<t^\prime\}}\\
&+V^-\bigl(t^\prime,y_{t,x}^{\psi,\overline{\beta}(\psi)}(t^\prime)\bigr)\exp\bigl(-\lambda(t^\prime-t)\bigr)\biggr\}-\varepsilon,
\end{aligned}
\end{equation}
where $$\overline{\beta}(\psi):=(\overline{\rho_k},\overline{\eta_k})_{k\in\mathbb{N}^*}.$$
Next, we define $\beta\in\mathcal{B}$, a non-anticipative strategy for player$-\eta$, as follows:
\begin{equation*}
\beta\bigl(\theta(s),u\bigr):=\left\{
\begin{aligned}
&\overline{\beta}\bigl(\theta(s),u\bigr),\;s\leq t^\prime;\\
&\beta_{z}\bigl(\theta^{z}(s-t^\prime),u^z\bigr),\;t^\prime<s\leq T,
\end{aligned}
\right.
\end{equation*}
where $z:=y_{t,x}^{\psi,\overline{\beta}(\psi)}(t^\prime)$, $\theta^{z}(.)\equiv\theta(.+t^\prime)$ and $u^z:=(\tau_m^z,\xi_m^z)_{m\in\mathbb{N}^*}$ with $\tau_m^z\in[t^\prime,T]$. Since we have for all $t\in[0,T]$, $$y_{t,x}^{\psi,\beta(\psi)}(t+t^\prime)=y_{t^\prime,z}^{\psi_z,\beta_z(\psi_z)}(t),$$
where $\psi_z:=\bigl(\theta^z(.),u^z\bigr)$, then by the change of variable $s=t+t^\prime$ we get
\begin{equation*}
\begin{aligned}
J\bigl(t^\prime,z;\psi_z,\beta_z(\psi_z)\bigr)=&\int_{t^\prime}^{T} f\bigl(s,y_{t^\prime,z}^{\psi,\beta(\psi)}(s);\theta(s)\bigr)\exp\bigl(-\lambda (s-t^\prime)\bigr)ds\\
&-\sum_{m\geq 1}c\bigl(\tau_m,y_{t^\prime,z}^{\psi,\beta(\psi)}(\tau_m^-);\xi_m\bigr)\exp\bigl(-\lambda(\tau_m-t^\prime)\bigr)\ind_{\{t^\prime\leq\tau_m\leq T\}}\prod_{k\geq 1}\ind_{\{\tau_m\neq\rho_k\}}\\
&+\sum_{k\geq 1}\chi\bigl(\rho_k,y_{t^\prime,z}^{\psi,\beta(\psi)}(\rho_k^-);\eta_k\bigr)\exp\bigl(-\lambda(\rho_k-t^\prime)\bigr)\ind_{\{t^\prime\leq\rho_k\leq T\}},
\end{aligned}
\end{equation*}
where $$\beta(\psi):=(\rho_k,\eta_k)_{k\in\mathbb{N}^*}.$$
Then by (\ref{equation_2.2}) and (\ref{equation_2.3}) we deduce
\begin{equation*}
\begin{aligned}
W_{t^\prime}(t,x)\geq& \sup_{\psi\in\Psi}\biggl\{\int_t^{T}f\bigl(s,y_{t,x}^{\psi,\beta(\psi)}(s);\theta(s)\bigr)\exp\bigl(-\lambda(s-t)\bigr)ds\\
&-\sum_{m\geq 1}c\bigl(\tau_m,y_{t,x}^{\psi,\beta(\psi)}(\tau_m^-);\xi_m\bigr)\exp\bigl(-\lambda(\tau_m-t)\bigr)\prod_{k\geq 1}\ind_{\{\tau_m\neq\rho_k\}}\\
&+\sum_{k\geq 1}\chi\bigl(\rho_k,y_{t,x}^{\psi,\beta(\psi)}(\rho_k^-);\eta_k\bigr)\exp\bigl(-\lambda(\rho_k-t)\bigr)\biggr\}-2\varepsilon\\
&\geq V^-(t,x)-2\varepsilon,
\end{aligned}
\end{equation*}
thus, since $\varepsilon$ is arbitrary, we get the desired inequality. We next prove that $W_{t^\prime}(t,x)\leq V^-(t,x)$. For any $(s,z)\in[t,T]\times\mathbb{R}^n$ we pick the non-anticipative strategy $\beta_z\in\mathcal{B}$ for player$-\eta$ which satisfies the inequality (\ref{equation_2.2}). We then pick $\overline{\psi}:=\bigl(\overline{\theta}(.),\overline{u}:=(\overline{\tau_m},\overline{\xi_m})_{m\in\mathbb{N}^*}\bigr)\in\Psi$, the control for player$-\xi$ that satisfies the following:
\begin{equation}\label{equation_2.4}
\begin{aligned}
W_{t^\prime}(t,x)\leq&\int_t^{t^\prime} f\bigl(s,y_{t,x}^{\overline{\psi},\beta_z(\overline{\psi})}(s);\overline{\theta}(s)\bigr)\exp\bigl(-\lambda(s-t)\bigr)ds\\
&-\sum_{m\geq 1}c\bigl(\overline{\tau_m},y_{t,x}^{\overline{\psi},\beta_z(\overline{\psi})}(\overline{\tau_m}^-);\overline{\xi_m}\bigr)\exp\bigl(-\lambda(\overline{\tau_m}-t)\bigr)\ind_{\{\overline{\tau_m}<t^\prime\}}\prod_{k\geq 1}\ind_{\{\overline{\tau_m}\neq\rho_k^z\}}\\
&+\sum_{k\geq 1}\chi\bigl(\rho_k^z,y_{t,x}^{\overline{\psi},\beta_z(\overline{\psi})}({\rho_k^z}^-);\eta_k^z\bigr)\exp\bigl(-\lambda(\rho_k^z-t)\bigr)\ind_{\{{\rho_k^z}<t^\prime\}}\\
&+V^-\bigl(t^\prime,y_{t,x}^{\overline{\psi},\beta_z(\overline{\psi})}(t^\prime)\bigr)\exp\bigl(-\lambda(t^\prime-t)\bigr)+\varepsilon,
\end{aligned}
\end{equation}
where $$\beta_z(\overline{\psi}):=(\rho_k^z,\eta_k^z)_{k\in\mathbb{N}^*}.$$
For any $\psi:=\bigl(\theta(.),u\bigr)\in\Psi$, we define the control $\tilde{\psi}:=\bigl(\tilde{\theta}(.),\tilde{u}\bigr)\in\Psi$ for player$-\xi$ as follows:
\begin{equation}\label{equation_2.5}
\bigl(\tilde{\theta}(s),\tilde{u}\bigr):=\left\{
\begin{aligned}
&\bigl(\overline{\theta}(s),\overline u\bigr),\;s\leq t^\prime;\\
&\bigl(\theta(s-t^\prime),u\bigr),\;t^\prime<s\leq T,
\end{aligned}
\right.
\end{equation}
where $u:=(\tau_m,\xi_m)_{m\in\mathbb{N}^*}\in\mathcal{U}$ with $\tau_m\in[t^\prime,T]$. Moreover, we define $\beta\in\mathcal{B}$ a non-anticipative strategy for player$-\eta$ as follows:
\begin{equation}\label{equation_2.6}
\beta\bigl(\theta(s),u\bigr):=\beta_z\bigl(\tilde{\theta}(s+t^\prime),\tilde{u}\bigr).
\end{equation}
Next, set
\begin{equation}\label{equation_2.7}
z_1:=y_{t,x}^{\overline{\psi},\beta_{z}(\overline{\psi})}(t^\prime),
\end{equation}
and choose $\psi\in\Psi$ such that
\begin{equation}\label{equation_2.8}
V^-(t^\prime,z_1)\leq J\bigl(t^\prime,z_1;\psi,\beta(\psi)\bigr)+\varepsilon.
\end{equation}
Observe that, by (\ref{equation_2.5}) and (\ref{equation_2.6}), we have
\begin{equation*}
y_{t,x}^{\tilde{\psi},\beta_{z}(\tilde{\psi})}(s)=\left\{
\begin{aligned}
&y_{t,x}^{\overline{\psi},\beta_{z}(\overline{\psi})}(s),\;s\leq t^\prime;\\
&y_{t^\prime,z_1}^{\psi,\beta(\psi)}(s-t^\prime),\;t^\prime<s\leq T,
\end{aligned}
\right.
\end{equation*}
so by the change of variable $s=t+t^\prime$ we deduce for $u:=(\tau_m,\xi_m)_{m\in\mathbb{N}^*}$ that
\begin{equation}\label{equation_2.9}
\begin{aligned}
J\bigl(t^\prime,z_1;\psi,\beta(\psi)\bigr)=&\int_{t^\prime}^{T} f\bigl(s,y_{t^\prime,z_1}^{\tilde{\psi},\beta_{z}(\tilde{\psi})}(s);\tilde{\theta}(s)\bigr)\exp\bigl(-\lambda(s-t^\prime)\bigr)ds\\
&-\sum_{m\geq 1}c\bigl(\tau_m,y_{t^\prime,z_1}^{\tilde{\psi},\beta_{z}(\tilde{\psi})}(\tau_m^-);\xi_m\bigr)\exp\bigl(-\lambda(\tau_m-t^\prime)\bigr)\ind_{\{t^\prime\leq\tau_m\leq T\}}\prod_{k\geq 1}\ind_{\{\tau_m\neq\tilde{\rho}_k\}}\\
&+\sum_{k\geq 1}\chi\bigl(\rho_k,y_{t^\prime,z_1}^{\tilde{\psi},\beta_{z}(\tilde{\psi})}(\tilde{\rho}_k^-);\tilde{\eta}_k\bigr)\exp\bigl(-\lambda(\tilde{\rho}_k-t^\prime)\bigr)\ind_{\{t^\prime\leq\tilde{\rho}_k\leq T\}},
\end{aligned}
\end{equation}
where $$\beta_z\bigl(\tilde{\theta}(.),\tilde{u}\bigr):=(\tilde{\rho}_k,\tilde{\eta}_k)_{k\in\mathbb{N}^*}.$$
Now we use (\ref{equation_2.4}), (\ref{equation_2.5}), (\ref{equation_2.7}), (\ref{equation_2.8}) and (\ref{equation_2.9}) to get
\begin{equation*}
W_{t^\prime}(t,x)\leq J\bigl(t,x;\tilde{\psi},\beta_{z}(\tilde{\psi})\bigr)+2\varepsilon,
\end{equation*}
thus, from inequality (\ref{equation_2.2}), we deduce that $W_{t^\prime}(t,x)\leq V^-(t,x)+3\varepsilon$. Then, since $\varepsilon$ is arbitrary, we obtain the desired inequality.
\end{proof}
\begin{proposition}[State-Continuity]\label{Proposition2.4}
Assume $\textbf{H}_b$, $\textbf{H}_g$, $\textbf{H}_f$, $\textbf{H}_{c,\chi}$ and $\textbf{H}_{G}$. Then there exists a real positive constant $M$ such that for all $x,x^\prime\in\mathbb{R}^n$, and $t\in[0,T]$, the lower value and the upper value satisfy
$$\bigl|v(t,x)-v(t,x^\prime)\bigr|\leq M\|x-x^\prime\|.$$
\end{proposition}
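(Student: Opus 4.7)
The plan is to use the standard technique of bounding the difference of $\inf\sup$ (or $\sup\inf$) quantities by the sup of the difference. First I would write, for any $\varepsilon>0$, near-optimal strategies for the problem at $(t,x')$ and couple them to strategies at $(t,x)$. Concretely, for $V^-$ and fixed $\varepsilon>0$, pick $\beta_\varepsilon\in\mathcal{B}$ that is $\varepsilon$-optimal for $V^-(t,x')$, then pick $\psi_\varepsilon\in\Psi$ such that $V^-(t,x)\leq \sup_{\psi}J(t,x;\psi,\beta_\varepsilon(\psi))+\varepsilon \leq J(t,x;\psi_\varepsilon,\beta_\varepsilon(\psi_\varepsilon))+2\varepsilon$. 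The symmetric argument at $(t,x')$ gives $V^-(t,x')\geq J(t,x';\psi_\varepsilon,\beta_\varepsilon(\psi_\varepsilon))-\varepsilon$. Subtracting and letting $\varepsilon\to 0$ reduces the problem to estimating $|J(t,x;\psi,v)-J(t,x';\psi,v)|$ uniformly in $(\psi,v)$, where for both initial conditions we run the same control pair and hence obtain the same impulse times $\tau_m,\rho_k$ and impulse values $\xi_m,\eta_k$. The same reasoning works for $V^+$.

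Next I would invoke Proposition \ref{Proposition2.1}(ii) with $t'=t$ to get the trajectory estimate $\|y_{t,x'}^{\psi,v}(s)-y_{t,x}^{\psi,v}(s)\|\leq \exp(C(T-t))\|x'-x\|$ for all $s\in[t,T]$, uniformly in $(\psi,v)$. Then, using the Lipschitz assumptions in $\textbf{H}_f$, $\textbf{H}_{c,\chi}$, and $\textbf{H}_G$ (constants $C_f,C_c,C_\chi,C_G$), I would bound each of the four contributions in the definition of $J$:
\begin{equation*}
\begin{aligned}
\bigl|J(t,x;\psi,v)-J(t,x';\psi,v)\bigr|\leq&\;C_f\exp(C(T-t))(T-t)\|x-x'\|+C_G\exp(C(T-t))\|x-x'\|\\
&+\bigl(C_c N_\xi+C_\chi N_\eta\bigr)\exp(C(T-t))\|x-x'\|,
\end{aligned}
\end{equation*}
where $N_\xi$ and $N_\eta$ denote the total number of impulses of each player on $[t,T]$. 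Discounting by $\exp(-\lambda(\cdot -t))\in(0,1]$ only helps.

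The main obstacle is controlling $N_\xi$ and $N_\eta$, since in principle an admissible strategy could prescribe infinitely many impulses. This is handled using $\textbf{H}_{c,\chi}$: since $\inf c:=c_0>0$ and $\inf\chi:=\chi_0>0$, and since $f$ and $G$ are bounded, there is a constant $K$ depending only on $\|f\|_\infty$, $\|G\|_\infty$, $T$ and $\lambda$ such that any strategy $(\psi,v)$ producing more than $N_0:=\lceil K/\min(c_0,\chi_0)\rceil$ impulses satisfies $J(t,x;\psi,v)\leq -\|G\|_\infty-1$ (resp. $\geq\|G\|_\infty+1$), hence is strictly dominated by the no-impulse strategy and cannot contribute to the near-optimal selection. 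Thus when restricting to $\varepsilon$-optimal strategies we may assume $N_\xi+N_\eta\leq 2N_0$, so the bracketed constant in the display above is uniformly bounded. Collecting everything yields a single constant $M$, depending only on $C_f,C_c,C_\chi,C_G,C,N_0$ and $T$, such that $|v(t,x)-v(t,x')|\leq M\|x-x'\|$ for $v=V^\pm$.
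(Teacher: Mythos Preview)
Your overall strategy---reduce to a uniform bound on $|J(t,x;\psi,v)-J(t,x';\psi,v)|$ via $\varepsilon$-optimal selections and the trajectory estimate of Proposition~\ref{Proposition2.1}---is sound and is also how the paper begins. The divergence is in how the impulse sums are handled. The paper does \emph{not} try to bound $N_\xi,N_\eta$ on $[t,T]$; instead it invokes the DPP at an intermediate time $t'$, replaces the terminal contribution by the boundedness of $V^-$ (Proposition~\ref{Proposition2.2}), and then chooses $t'$ so that $\exp(-\lambda(t'-t))\asymp\|x-x'\|$, splitting into the cases $C\lessgtr\lambda$. Your route is more direct (no DPP, no case analysis), but it hinges entirely on the impulse-count bound, and that step has a genuine gap.

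The claim ``any $(\psi,v)$ producing more than $N_0$ impulses satisfies $J\leq -\|G\|_\infty-1$ (resp.\ $\geq \|G\|_\infty+1$)'' is false as stated. In $J$ the $\xi$-costs enter with a minus sign and the $\eta$-costs with a plus sign, so if \emph{both} players use many impulses the two contributions can offset and $J$ can take any value. All you can extract from the two-sided near-optimality $J(t,x;\psi_\varepsilon,v_\varepsilon)\geq V^-(t,x)-\varepsilon$ and $J(t,x';\psi_\varepsilon,v_\varepsilon)\leq V^-(t,x')+\varepsilon$ is a bound on the \emph{difference} $(\text{$\eta$-costs})-(\text{$\xi$-costs})$, not on each sum separately. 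Consequently you cannot conclude $N_\xi+N_\eta\leq 2N_0$ for the selected pair $(\psi_\varepsilon,\beta_\varepsilon(\psi_\varepsilon))$, and the Lipschitz estimate on the impulse-cost sums $\sum_m C_c\|y_x-y_{x'}\|e^{-\lambda(\tau_m-t)}$ and $\sum_k C_\chi\|y_x-y_{x'}\|e^{-\lambda(\rho_k-t)}$ remains uncontrolled. To salvage your approach you would need an additional argument---e.g.\ showing that one can modify $\beta_\varepsilon$ and $\psi_\varepsilon$ to truncate impulses without worsening the relevant one-sided inequalities by more than $\varepsilon$---which is plausible but not the one-line domination you wrote. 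Alternatively, adopting the paper's DPP-at-$t'$ trick avoids having to bound the impulse counts on the full horizon.
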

\begin{proof}
\par We give only the proof for the lower value $V^-$, similarly for the upper value $V^+$. Let $t\in[0,T]$, fix $x,x^\prime\in\mathbb{R}^n$ and an arbitrary $\varepsilon>0$, and first pick a non-anticipative strategy $\beta^\varepsilon\in\mathcal{B}$ for minimizing player$-\eta$ such that the following inequality holds true:
$$V^-(t,x^\prime)\geq \sup_{\psi\in\Psi}J\bigl(t,x^\prime;\psi,\beta^\varepsilon(\psi)\bigr)-\frac{\varepsilon}{2},$$
then we choose $\psi^\varepsilon:=\bigl(\theta^\varepsilon(.),u^\varepsilon:=(\tau_m^\varepsilon,\xi_m^\varepsilon)_{m\in\mathbb{N}^*}\bigr)\in\Psi$, an admissible continuous-impulse control for maximizing player$-\xi$, such that
\begin{equation*}
\begin{aligned}
V^-(t,x)&\leq\sup_{\psi\in\Psi}J\Bigl(t,x;\psi,\beta^\varepsilon(\psi)\Bigr)\\
&\leq J\Bigl(t,x;\psi^\varepsilon,\beta^\varepsilon(\psi^\varepsilon)\Bigr)+\frac{\varepsilon}{2}.
\end{aligned}
\end{equation*}
Thus we get
$$V^-(t,x)-V^-(t,x^\prime)\leq J\bigl(t,x;\psi^\varepsilon,\beta^\varepsilon(\psi^\varepsilon)\bigr)-J\bigl(t,x^\prime;\psi^\varepsilon,\beta^\varepsilon(\psi^\varepsilon)\bigr)+\varepsilon.$$
It follows, for $\beta^\varepsilon(\psi^\varepsilon):=(\rho_k^\varepsilon,\eta_k^\varepsilon)_{k\in\mathbb{N}^*}\in\mathcal{V}$, that
\begin{equation*}
\begin{aligned}
V^-(t,x)-V^-(t,x^\prime)\leq&\int_t^{T} \Bigl[f\bigl(s,y_{t,x}^{\psi^\varepsilon,\beta^\varepsilon(\psi^\varepsilon)}(s);\theta^\varepsilon(s)\bigr)-f\bigl(s,y_{t,x^\prime}^{\psi^\varepsilon,\beta^\varepsilon(\psi^\varepsilon)}(s);\theta^\varepsilon(s)\bigr)\Bigr]\exp\bigl(-\lambda(s-t)\bigr)ds\\
&-\sum_{m\geq 1}c\bigl(\tau_m^\varepsilon,y_{t,x}^{\psi^\varepsilon,\beta^\varepsilon(\psi^\varepsilon)}({\tau_m^\varepsilon}^-);\xi_m^\varepsilon\bigr)\exp\bigl(-\lambda(\tau_m^\varepsilon-t)\bigr)\ind_{\{\tau_m^\varepsilon\leq T\}}\prod_{k\geq 1}\ind_{\{\tau_m^\varepsilon\neq\rho_k^\varepsilon\}}\\
&+\sum_{k\geq 1}\chi\bigl(\rho_k^\varepsilon,y_{t,x}^{\psi^\varepsilon,\beta^\varepsilon(\psi^\varepsilon)}({\rho_k^\varepsilon}^-);\eta_k^\varepsilon\bigr)\exp\bigl(-\lambda(\rho_k^\varepsilon-t)\bigr)\ind_{\{\rho_k^\varepsilon\leq T\}}\\
&+\sum_{m\geq 1}c\bigl(\tau_m^\varepsilon,y_{t,x^\prime}^{\psi^\varepsilon,\beta^\varepsilon(\psi^\varepsilon)}({\tau_m^\varepsilon}^-);\xi_m^\varepsilon\bigr)\exp\bigl(-\lambda(\tau_m^\varepsilon-t)\bigr)\ind_{\{\tau_m^\varepsilon\leq T\}}\prod_{k\geq 1}\ind_{\{\tau_m^\varepsilon\neq\rho_k^\varepsilon\}}\\
&-\sum_{k\geq 1}\chi\bigl(\rho_k^\varepsilon,y_{t,x^\prime}^{\psi^\varepsilon,\beta^\varepsilon(\psi^\varepsilon)}({\rho_k^\varepsilon}^-);\eta_k^\varepsilon\bigr)\exp\bigl(-\lambda(\rho_k^\varepsilon-t)\bigr)\ind_{\{\rho_k^\varepsilon\leq T\}}\\
&+\Bigl[G\bigl(y_{t,x}^{\psi^\varepsilon,\beta^\varepsilon(\psi^\varepsilon)}(T)\bigr)-G\bigl(y_{t,x^\prime}^{\psi^\varepsilon,\beta^\varepsilon(\psi^\varepsilon)}(T)\bigr)\Bigr]\exp\bigl(-\lambda(T-t)\bigr)+\varepsilon.
\end{aligned}
\end{equation*}
Then, from the DPP property (\ref{equation_2.1}) for $t^\prime>t$, we get
\begin{equation*}
\begin{aligned}
V^-(t,x)-V^-(t,x^\prime)\leq&\int_t^{t^\prime}\Bigl[f\bigl(s,y_{t,x}^{\psi^\varepsilon,\beta^\varepsilon(\psi^\varepsilon)}(s);\theta^\varepsilon(s)\bigr)-f\bigl(s,y_{t,x^\prime}^{\psi^\varepsilon,\beta^\varepsilon(\psi^\varepsilon)}(s);\theta^\varepsilon(s)\bigr)\Bigr]\exp\bigl(-\lambda(s-t)\bigr)ds\\
&-\sum_{m\geq 1}c\bigl(\tau_m^\varepsilon,y_{t,x}^{\psi^\varepsilon,\beta^\varepsilon(\psi^\varepsilon)}({\tau_m^\varepsilon}^-);\xi_m^\varepsilon\bigr)\exp\bigl(-\lambda(\tau_m^\varepsilon-t)\bigr)\ind_{\{\tau_m^\varepsilon<t^\prime\}}\prod_{k\geq 1}\ind_{\{\tau_m^\varepsilon\neq\rho_k^\varepsilon\}}\\
&+\sum_{k\geq 1}\chi\bigl(\rho_k^\varepsilon,y_{t,x}^{\psi^\varepsilon,\beta^\varepsilon(\psi^\varepsilon)}({\rho_k^\varepsilon}^-);\eta_k^\varepsilon\bigr)\exp\bigl(-\lambda(\rho_k^\varepsilon-t)\bigr)\ind_{\{\rho_k^\varepsilon<t^\prime\}}\\
&+\sum_{m\geq 1}c\bigl(\tau_m^\varepsilon,y_{t,x^\prime}^{\psi^\varepsilon,\beta^\varepsilon(\psi^\varepsilon)}({\tau_m^\varepsilon}^-);\xi_m^\varepsilon\bigr)\exp\bigl(-\lambda(\tau_m^\varepsilon-t)\bigr)\ind_{\{\tau_m^\varepsilon<t^\prime\}}\prod_{k\geq 1}\ind_{\{\tau_m^\varepsilon\neq\rho_k^\varepsilon\}}\\
&-\sum_{k\geq 1}\chi\bigl(\rho_k^\varepsilon,y_{t,x^\prime}^{\psi^\varepsilon,\beta^\varepsilon(\psi^\varepsilon)}({\rho_k^\varepsilon}^-);\eta_k^\varepsilon\bigr)\exp\bigl(-\lambda(\rho_k^\varepsilon-t)\bigr)\ind_{\{\rho_k^\varepsilon<t^\prime\}}\\
&+\Bigl[V^-\bigl(t^\prime,y_{t,x}^{\psi^\varepsilon,\beta^\varepsilon(\psi^\varepsilon)}(t^\prime)\bigr)-V^-\bigl(t^\prime,y_{t,x^\prime}^{\psi^\varepsilon,\beta^\varepsilon(\psi^\varepsilon)}(t^\prime)\bigr)\Bigr]\exp\bigl(-\lambda(t^\prime-t)\bigr)+\varepsilon.
\end{aligned}
\end{equation*}
Thus, by assumptions on functions $f,c$ and $\chi$, we get
\begin{equation*}
\begin{aligned}
V^-(t,x)-&V^-(t,x^\prime)\leq\int_t^{t^\prime} C_f\bigl\|y_{t,x}^{\psi^\varepsilon,\beta^\varepsilon(\psi^\varepsilon)}(s)-y_{t,x^\prime}^{\psi^\varepsilon,\beta^\varepsilon(\psi^\varepsilon)}(s)\bigr\|\exp\bigl(-\lambda(s-t)\bigr)ds\\
&-\sum_{m\geq 1}C_c\bigl\|y_{t,x}^{\psi^\varepsilon,\beta^\varepsilon(\psi^\varepsilon)}({\tau_m^\varepsilon}^-)-y_{t,x^\prime}^{\psi^\varepsilon,\beta^\varepsilon(\psi^\varepsilon)}({\tau_m^\varepsilon}^-)\bigr\|\exp\bigl(-\lambda(\tau_m^\varepsilon-t)\bigr)\ind_{\{\tau_m^\varepsilon<t^\prime\}}\prod_{k\geq 1}\ind_{\{\tau_m^\varepsilon\neq\rho_k^\varepsilon\}}\\
&+\sum_{k\geq 1}C_{\chi}\bigl\|y_{t,x}^{\psi^\varepsilon,\beta^\varepsilon(\psi^\varepsilon)}({\rho_k^\varepsilon}^-)-y_{t,x^\prime}^{\psi^\varepsilon,\beta^\varepsilon(\psi^\varepsilon)}({\rho_k^\varepsilon}^-)\bigr\|\exp\bigl(-\lambda(\rho_k^\varepsilon-t)\bigr)\ind_{\{\rho_k^\varepsilon<t^\prime\}}\\
&+\Bigl|V^-\bigl(t^\prime,y_{t,x}^{\psi^\varepsilon,\beta^\varepsilon(\psi^\varepsilon)}(t^\prime)\bigr)-V^-\bigl(t^\prime,y_{t,x^\prime}^{\psi^\varepsilon,\beta^\varepsilon(\psi^\varepsilon)}(t^\prime)\bigr)\Bigr|\exp\bigl(-\lambda (t^\prime-t)\bigr)+\varepsilon.
\end{aligned}
\end{equation*}
By Propositions \ref{Proposition2.1} and \ref{Proposition2.2}, we deduce that there exist some constants $C>0$ and $C_v>0$ such that
\begin{equation}\label{equation_2.10}
\begin{aligned}
V^-(t,x)-V^-(t,x^\prime)\leq&\;C_f\bigl\|x-x^\prime\bigr\|\int_t^{t^\prime}\exp\bigl((C-\lambda)(s-t)\bigr)ds\\
&-C_c\bigl\|x-x^\prime\bigr\|\sum_{m\geq 1}\exp\bigl((C-\lambda)(\tau_m^\varepsilon-t)\bigr)\ind_{\{\tau_m^\varepsilon<t^\prime\}}\prod_{k\geq 1}\ind_{\{\tau_m^\varepsilon\neq\rho_k^\varepsilon\}}\\
&+C_{\chi}\bigl\|x-x^\prime\bigr\|\sum_{k\geq 1}\exp\bigl((C-\lambda)(\rho_k^\varepsilon-t)\bigr)\ind_{\{\rho_k^\varepsilon<t^\prime\}}\\
&+2C_v\exp\bigl(-\lambda(t^\prime-t)\bigr)+\varepsilon.
\end{aligned}
\end{equation}
Now, if $C<\lambda$ the sums in the right-hand side of (\ref{equation_2.10}) are finite, then there exists a positive constant $K$ such that we have
\begin{equation}\label{equation_2.11}
\begin{aligned}
V^-(t,x)-V^-(t,x^\prime)\leq&\;\frac{C_f}{C-\lambda}\bigl\| x-x^\prime\bigr\|\Bigl[\exp\bigl((C-\lambda)(t^\prime-t)\bigr)-1\Bigr]\\
&+K\bigl\|x-x^\prime\bigr\|+2C_v\exp\bigl(-\lambda(t^\prime-t)\bigr)+\varepsilon,
\end{aligned}
\end{equation}
tacking into account the boundedness of $\exp\bigl((C-\lambda)(t^\prime-t)\bigr)$ and letting $t^\prime$ be such that $\exp\bigl(-\lambda(t^\prime-t)\bigr)=\bigl\|x-x^\prime\bigr\|$ with $\bigl\|x-x^\prime\bigr\|<1$ for any $t\in[0,T]$, then using the arbitrariness of $\varepsilon$ and the fact that $x$ and $x^\prime$ play symmetrical roles in the left hand side of the above inequality one might deduce the existence of a positive constant $M$ which satisfies
$$\bigl|V^-(t,x)-V^-(t,x^\prime)\bigr|\leq M\|x-x^\prime\|\;\text{for all}\;t\in[0,T].$$
In the case where $\lambda<C$, we choose $t^\prime$ such that $\exp\bigl(-C(t^\prime-t)\bigr)=\bigl\|x-x^\prime\bigr\|^{1/2}$ with $\bigl\|x-x^\prime\bigr\|<1$. Hence, in the right-hand side of (\ref{equation_2.11}), the first term equals to
$$\frac{C_f}{C-\lambda}\bigl\|x-x^\prime\bigr\|^{1/2}\Bigl(\exp\bigl(-\lambda (t^\prime-t)\bigr)-\bigl\|x-x^\prime\bigr\|^{1/2}\Bigr),$$
where the term $\exp\bigl(-\lambda(t^\prime-t)\bigr)$ is bounded for any $t\in[0,T]$. We then deduce from the fact that $\exp\bigl(-\lambda(t^\prime-t)\bigr)=\bigl\|x-x^\prime\bigr\|^{1/2}\exp\bigl((C-\lambda)(t^\prime-t)\bigr)$ and the arbitrariness of $\varepsilon$ that there exists a positive constant $M_1$ which satisfies $$V^-(t,x)-V^-(t,x^\prime)\leq M_1\|x-x^\prime\|\;\text{for all}\;t\in[0,T],$$
again the roles of $x$ and $x^\prime$ being symmetrical, we then conclude. Finally, in the case where $C=\lambda$, it suffice to let some constant $\hat{\lambda}<\lambda=C$, so we go back to the above inequality (\ref{equation_2.10}) and we proceed, since
$$\exp\bigl((C-\lambda) (t^\prime-t)\bigr)<\exp\bigl((C-\hat{\lambda})(t^\prime-t)\bigr)\;\text{and}\; \exp\bigl(-\lambda(t^\prime-t)\bigr)<\exp\bigl(-\hat{\lambda}(t^\prime-t)\bigr),$$
as above with the case $\hat{\lambda}\neq C$. Thus the lower value function is Lipschitz-continuous w.r.t. state variable, which completes the proof.
\end{proof}
Next, we give the uniform continuity of the functions $x\rightarrow\mathcal{H}_{inf}^\chi v(t,x)$ and $x\rightarrow\mathcal{H}_{sup}^c v(t,x)$ for $t\in[0,T]$, the proof in obvious.
\begin{proposition}\label{Proposition2.5}
Let $t\in[0,T]$ and $x\rightarrow v(t,x)$ be a uniformly continuous function in $\mathbb{R}^n$. Then the two functions $x\rightarrow\mathcal{H}_{inf}^\chi v(t,x)$ and $x\rightarrow\mathcal{H}_{sup}^c v(t,x)$ are uniformly continuous in $\mathbb{R}^n$.
\end{proposition}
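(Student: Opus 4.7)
The plan is to exploit three ingredients: the uniform continuity of $x\mapsto v(t,x)$ assumed in the statement, the Lipschitz-continuity of the shift maps $x\mapsto x+g_\xi(t,x;\xi)$ and $x\mapsto x+g_\eta(t,x;\eta)$ uniformly in the impulse variable (assumption $\textbf{H}_g$), and the Lipschitz-continuity of the impulse cost functions $c$ and $\chi$ in $y$ uniformly in $\xi$, $\eta$ (assumption $\textbf{H}_{c,\chi}$). Since the non-local operators $\mathcal{H}^c_{sup}$ and $\mathcal{H}^\chi_{inf}$ from Section \ref{Sect.2.3} are defined as a supremum (resp. infimum) over $U$ (resp. $V$) of expressions of the form $v\bigl(t,x+g_\xi(t,x;\xi)\bigr)-c(t,x;\xi)$ and $v\bigl(t,x+g_\eta(t,x;\eta)\bigr)+\chi(t,x;\eta)$, the standard inequality $|\sup A-\sup B|\leq\sup|A-B|$ reduces the problem to controlling, uniformly in the impulse parameter, the increments of the two building blocks.

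Concretely, I would proceed as follows for $\mathcal{H}^c_{sup}$. Fix $\varepsilon>0$ and let $\omega_v$ denote a modulus of continuity for $x\mapsto v(t,x)$, so there exists $\delta>0$ such that $\|y-y'\|<\delta$ implies $|v(t,y)-v(t,y')|<\varepsilon$. For any $x,x'\in\mathbb{R}^n$ and any $\xi\in U$, the triangle inequality combined with $\textbf{H}_g$ yields
\begin{equation*}
\bigl\|\bigl(x+g_\xi(t,x;\xi)\bigr)-\bigl(x'+g_\xi(t,x';\xi)\bigr)\bigr\|\leq(1+C_{g_\xi})\|x-x'\|,
\end{equation*}
so that choosing $\|x-x'\|<\delta/(1+C_{g_\xi})$ gives $\bigl|v\bigl(t,x+g_\xi(t,x;\xi)\bigr)-v\bigl(t,x'+g_\xi(t,x';\xi)\bigr)\bigr|<\varepsilon$ uniformly in $\xi$. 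Meanwhile, $\textbf{H}_{c,\chi}$ ensures $|c(t,x;\xi)-c(t,x';\xi)|\leq C_c\|x-x'\|$ uniformly in $\xi$. Combining and passing to the supremum over $\xi$ yields
\begin{equation*}
\bigl|\mathcal{H}^c_{sup}v(t,x)-\mathcal{H}^c_{sup}v(t,x')\bigr|\leq\varepsilon+C_c\|x-x'\|,
\end{equation*}
which, by shrinking $\|x-x'\|$ further, can be made arbitrarily small. Hence $x\mapsto\mathcal{H}^c_{sup}v(t,x)$ is uniformly continuous in $\mathbb{R}^n$. The argument for $x\mapsto\mathcal{H}^\chi_{inf}v(t,x)$ is identical, replacing $g_\xi,c,U$ by $g_\eta,\chi,V$ and the supremum by an infimum; the inequality $|\inf A-\inf B|\leq\sup|A-B|$ plays the same role.

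The only potential obstacle is ensuring all Lipschitz/modulus estimates are uniform in the impulse variable so that the sup/inf can be taken harmlessly, but this is precisely what assumptions $\textbf{H}_g$ and $\textbf{H}_{c,\chi}$ guarantee; no additional compactness or structural hypothesis on $U$ or $V$ is needed. This is why the authors describe the proof as obvious.
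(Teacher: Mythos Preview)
Your proposal is correct and is precisely the standard argument the authors have in mind; the paper itself omits the proof entirely, stating only that it ``is obvious,'' so there is nothing further to compare. Your identification of the three ingredients---uniform continuity of $v$, the uniform-in-$\xi$ (resp.\ $\eta$) Lipschitz bound on $x\mapsto x+g_\xi(t,x;\xi)$ from $\textbf{H}_g$, and the uniform-in-$\xi$ Lipschitz bound on $c$ from $\textbf{H}_{c,\chi}$---together with the elementary inequality $|\sup A-\sup B|\le\sup|A-B|$ is exactly what is needed.
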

\subsection{Hamilton-Jacobi-Bellman-Isaacs Equation and Approximate Equation}\label{Sect.2.3}
Since in the definition of the lower value the inf is taken over non-anticipative strategies whereas in the definition of the upper value it is taken over admissible controls, and similarly the sup is taken over different sets in the definitions of the lower and the upper value functions, then the inequality $V^+(t,x)\leq V^-(t,x)$ for all $(t,x)\in[0,T]\times\mathbb{R}^n$ is false in general. In addition the inequality $V^-(t,x)\leq V^+(t,x)$ for all $(t,x)\in[0,T]\times\mathbb{R}^n$ is not obvious at first glance. We then prove, in a rather indirect way by using the associated HJBI equation, that the zero-sum DG control problem studied has a value. The \textit{dynamic programming equation (DPE)} associated to our deterministic finite-time horizon, two-player, zero-sum DG control problem, which turns out to be the same for the two value functions because the two players cannot act simultaneously on the system, is derived from DPP and is given by the following expression:
\begin{equation*}
\text{(HJBI)}\;\left\{
\begin{aligned}
&
\begin{aligned}
\max\biggl\{\min\Bigl[&-\frac{\partial}{\partial s}v(s,y)+\lambda v(s,y)+H\bigl(s,y,D_{y}v(s,y)\bigr),v(s,y)-\mathcal{H}_{sup}^c v(s,y)\Bigr];\\
&v(s,y)-\mathcal{H}_{inf}^\chi v(s,y)\biggr\}=0,\;\text{on}\;[t,T)\times\mathbb{R}^n;
\end{aligned}\\
&v(T,y)=G(y)\;\text{for all}\;y\in\mathbb{R}^n,
\end{aligned}
\right.
\end{equation*}
where $\frac{\partial}{\partial s}v(s,y)$ denotes the time derivative, and $D_{y}v(s,y)$ the spatial gradient of the function $v(s,y):[t,T]\times\mathbb{R}^n\rightarrow\mathbb{R}$, with $D_y:=\bigl(\frac{\partial}{\partial y_1},\dots,\frac{\partial}{\partial y_n}\bigr)^\top$. The associated \textit{first-order Hamiltonian} ($H$) and the two obstacles, defined through the use of the \textit{maximum} and \textit{minimum non-local cost operators} ($\mathcal{H}_{sup}^c$) and ($\mathcal{H}_{inf}^\chi$), respectively, are given by the following:
\begin{definition}[Hamiltonian and Cost Operators]
For any function $v:[t,T]\times\mathbb{R}^n\rightarrow\mathbb{R}$, we define the first-order Hamiltonian ($H$) by:
\begin{equation*}
H\bigl(s,y,D_{y}v(s,y)\bigr):=\inf_{\theta\in\mathbb{R}^l}\bigl\{-D_{y}v(s,y).b(s,y;\theta)-f(s,y;\theta)\bigr\},
\end{equation*}
where $"."$ denotes the inner product in $\mathbb{R}^n$, and the two non-local cost operators ($\mathcal{H}_{sup}^c$) and ($\mathcal{H}_{inf}^\chi$) by:
\begin{equation*}
\begin{aligned}
\mathcal{H}_{sup}^c v(s,y):=&\;\sup_{\xi\in U}\Bigl\{v\bigl(s,y+g_\xi(s,y;\xi)\bigr)-c(s,y;\xi)\Bigr\};\\
\mathcal{H}_{inf}^\chi v(s,y):=&\;\inf_{\eta\in V}\Bigl\{v\bigl(s,y+g_\eta(s,y;\eta)\bigr)+\chi(s,y;\eta)\Bigr\}.
\end{aligned}
\end{equation*}\qed
\end{definition}
\par We propose in this paper, for all $(s,y)\in[t,T]\times\mathbb R^n$, the approximate equation (HJBI$_{h}$) an approximation of the classic HJBI equation. Let $h$ be the time discretization step for the approximation we will be given, $h_0$ be a positive number, and $\Phi(h)$ be a continuous function such that $\Phi(0)=1$ and $0<\Phi(h)<1$ for $0<h<h_0$:
\begin{equation*}
\text{(HJBI$_h$)}\;\left\{
\begin{aligned}
&
\begin{aligned}
\max\biggl\{\min\Bigl[&H_h\bigl(s,y,v_h(s,y)\bigr),v_h(s,y)-\Phi(h)\mathcal{H}_{sup}^c v_h(s,y)\Bigr];v_h(s,y)-\Phi(h)\mathcal{H}_{inf}^\chi v_h(s,y)\biggr\}=0,\\
&\;\text{on}\;[t,T)\times\mathbb{R}^n;
\end{aligned}\\
&v_h(T,y)=G(y)\;\text{for all}\;y\in\mathbb{R}^n,
\end{aligned}
\right.
\end{equation*}
where the approximate Hamiltonian ($H_h$) is defined as follows:
\begin{definition}[Approximate Hamiltonian]\label{def2.5}
For any function $v_h:[t,T]\times\mathbb{R}^n\rightarrow\mathbb{R}$, we define the approximate Hamiltonian ($H_h$) by:
\begin{equation*}
H_h\bigl(s,y,v_h(s,y)\bigr):=\inf_{\theta\in\mathbb{R}^l}\Bigl\{v_h(s,y)-(1-\lambda h)v_h\bigl(s+h,y+hb(s,y;\theta)\bigr)-hf(s,y;\theta)\Bigr\}.
\end{equation*}\qed
\end{definition}
\begin{remark}\label{Remark2.3}
The contribution of the paper is four-fold:
\begin{enumerate}
\item First, we prove that the lower value $V^-$ and the upper value $V^+$ are viscosity solutions to the HJBI equation. Then we show, by proving a comparison principle, that the HJBI equation has a unique solution in viscosity sense, i.e., the zero-sum DG control problem studied admits the value $V$;
\item Second, we prove that an approximate value function $v_h$ exists, that it is the unique solution of the approximate equation (HJBI$_{h}$). Then we show that $v_h$ converges, as the time discretization step $h$ goes to zero, locally uniformly towards the value function of the zero-sum DG control problem;
\item Third, we prove a verification theorem for the zero-sum DG control problem considered, that is, the game has a NE strategies. This result will lead to some computational algorithms for the zero-sum DG studied;
\item Fourth, we apply our theory to continuous-time portfolio optimization problem to derive a new optimization model and to give a new portfolio strategy. \qed
\end{enumerate}
\end{remark}
\section{Viscosity Characterization of the Value Functions}\label{Sect.3}
The \textit{value function} of an OC problem is a solution to the corresponding
\textit{HJB} (or, \textit{HJBI}) \textit{equation} whenever it has sufficient regularity (see e.g. \cite{FS06}). In other word, it requires that the HJB (or, HJBI) equation admits classical solutions, meaning that the solutions be smooth enough. Unfortunately, this is not necessarily the case even in very simple cases. To overcome this difficulty, the so-called \textit{viscosity solution (VS)} was introduced in the early 80's \cite{CL83,CEL84,CIL92}. This new notion is a kind of non-smooth solutions, where if the value function is continuous, then, it is a solution to the HJB (or, HJBI) equation in the VS sense, whose key feature is to replace the conventional derivatives while maintaining the uniqueness of solutions under very mild conditions. These make the theory of VS a powerful tool in tackling OC problems and DGs \cite{BC97,ES84,FS06,Pha09,S85,S85'}. We recall here the definition of a VS of the HJBI equation following \cite{CL83,CEL84,CIL92}:
\begin{definition}[Viscosity Solution]\label{definition3.1}
Let $v:[t,T]\times\mathbb{R}^n\rightarrow\mathbb{R}$ be a continuous function such that $v(T,y)=G(y)$ for any $y\in\mathbb{R}^n$. $v$ is called:
\begin{enumerate}[i.]
\item A viscosity sub-solution of the Hamilton-Jacobi-Bellman-Isaacs equation (HJBI) if for any $(\overline s,\overline y)\in [t,T)\times\mathbb{R}^n$ and any function $\phi\in C^{1,1}\bigl([t,T)\times\mathbb{R}^n\bigr)$ such that $v(\overline s,\overline y)=\phi(\overline s,\overline y)$ and $(\overline s,\overline y)$ is a local maximum point of $v-\phi$, we have
\begin{equation*}
\begin{aligned}
\max\biggl\{\min\Bigl[&-\frac{\partial\phi}{\partial s}(\overline s,\overline y)+\lambda v(\overline s,\overline y)+H\bigl(\overline s,\overline y,D_{y}\phi(\overline s,\overline y)\bigr),v(\overline s,\overline y)-\mathcal{H}_{sup}^c v(\overline s,\overline y)\Bigr];\\
&v(\overline s,\overline y)-\mathcal{H}_{inf}^\chi v(\overline s,\overline y)\biggr\}\leq 0;
\end{aligned}
\end{equation*}
\item A viscosity super-solution of the Hamilton-Jacobi-Bellman-Isaacs equation (HJBI) if for any $(\underline s,\underline y)\in [t,T)\times\mathbb{R}^n$ and any function $\phi\in C^{1,1}\bigl([t,T)\times\mathbb{R}^n\bigr)$ such that $v(\underline s,\underline y)=\phi(\underline s,\underline y)$ and $(\underline s,\underline y)$ is a local minimum point of $v-\phi$, we have
\begin{equation*}
\begin{aligned}
\max\biggl\{\min\Bigl[&-\frac{\partial\phi}{\partial s}(\underline s,\underline y)+\lambda v(\underline s,\underline y)+H\bigl(\underline s,\underline y,D_{y}\phi(\underline s,\underline y)\bigr),v(\underline s,\underline y)-\mathcal{H}_{sup}^c v(\underline s,\underline y)\Bigr];\\
&v(\underline s,\underline y)-\mathcal{H}_{inf}^\chi v(\underline s,\underline y)\biggr\}\geq 0;
\end{aligned}
\end{equation*}
\item A viscosity solution of the Hamilton-Jacobi-Bellman-Isaacs equation (HJBI) if it is both a viscosity sub-solution and super-solution of the Hamilton-Jacobi-Bellman-Isaacs equation (HJBI). \qed
\end{enumerate}
\end{definition}
The remainder of this section deals with the first contribution of the paper, as it is mentioned in Remark \ref{Remark2.3}.
\subsection{Existence of Viscosity Solutions for the HJBI Equation}
The main result of this section, Theorem \ref{theorem3.1}, shows that the value functions of the zero-sum DG control problem studied satisfy the HJBI equation in VS sense. The proof of this theorem requires the following technical Lemmas \ref{Lemma3.1} and \ref{Lemma3.2}:
\begin{lemma}\label{Lemma3.1}
Let assumptions $\textbf{H}_b$, $\textbf{H}_g$, $\textbf{H}_f$, $\textbf{H}_{c,\chi}$ and $\textbf{H}_{G}$ hold. Given any $(s,y)\in[t,T)\times\mathbb{R}^n$, the lower value and the upper value satisfy the following equation:
$$\max\Bigl\{\min\bigl[0,v(s,y)-\mathcal{H}_{sup}^c v(s,y)\bigr];v(s,y)-\mathcal{H}_{inf}^\chi v(s,y)\Bigr\}=0.$$
\end{lemma}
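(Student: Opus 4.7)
The plan is to establish the equation by proving two opposite inequalities: (i) $v(s,y) \leq \mathcal{H}_{inf}^\chi v(s,y)$ and (ii) $v(s,y) \geq \min\{\mathcal{H}_{inf}^\chi v(s,y),\, \mathcal{H}_{sup}^c v(s,y)\}$. These two together give the claim via a case split: if $\mathcal{H}_{sup}^c v(s,y) \leq \mathcal{H}_{inf}^\chi v(s,y)$, then (ii) yields $v(s,y) \geq \mathcal{H}_{sup}^c v(s,y)$, so $\min[0,\,v-\mathcal{H}_{sup}^c v]=0$, and together with (i) the outer $\max$ equals $0$; otherwise (ii) gives $v(s,y) \geq \mathcal{H}_{inf}^\chi v(s,y)$, which combined with (i) forces $v(s,y)=\mathcal{H}_{inf}^\chi v(s,y)$, and again the outer $\max$ equals $0$. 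I would write the argument for $V^-$; the proof for $V^+$ is entirely symmetric, using $V^+(s,y)=\sup_\alpha\inf_v J$ in place of $V^-(s,y)=\inf_\beta\sup_\psi J$.

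To prove (i), I would apply Theorem \ref{DPP} on $[s, s+h]$ with the constant non-anticipative strategy $\beta \in \mathcal{B}$ defined by $\beta(\psi) \equiv v$ for all $\psi$, where $v$ consists of the single impulse $(\rho_1,\eta_1)=(s,\eta)$ for an arbitrary fixed $\eta \in V$. Because $\rho_1=s$, any simultaneous maximizer impulse at $s$ is cancelled by $\prod_k \ind_{\{\tau_m\neq\rho_k\}}$, so the state jumps to $y+g_\eta(s,y;\eta)$ at $s^+$ and only the cost $\chi(s,y;\eta)$ is paid. Letting $h\downarrow 0$ in the DPP identity, the running-cost integral vanishes by boundedness in $\textbf{H}_f$, the discount factors tend to $1$, and by Proposition \ref{Proposition2.1} and the Lipschitz continuity of $v$ (Proposition \ref{Proposition2.4}) the continuation term converges, uniformly in $\psi$, to $v(s,y+g_\eta(s,y;\eta))$. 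This delivers $v(s,y) \leq \chi(s,y;\eta)+v(s,y+g_\eta(s,y;\eta))$, and taking the infimum over $\eta \in V$ gives (i).

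To prove (ii), I would dually use the elementary consequence of the DPP that, for any reference control $\psi_0 \in \Psi$ for the maximizer, $V^-(s,y) \geq \inf_{v\in\mathcal{V}} W_{s+h}(s,y;\psi_0,v)$. Picking $\psi_0$ with immediate impulse $(\tau_1,\xi_1)=(s,\xi)$ for an arbitrary fixed $\xi \in U$ and any convenient continuous control, the minimizer's infimum separates as $h\downarrow 0$ into two regimes: if $\rho_1 > s$ the maximizer's impulse goes through and the DPP right-hand side tends to $v(s,y+g_\xi(s,y;\xi))-c(s,y;\xi)$; if $\rho_1=s$ the minimizer vetoes the maximizer at cost $\chi(s,y;\eta_1)$ and the right-hand side tends to $\chi(s,y;\eta_1)+v(s,y+g_\eta(s,y;\eta_1))$, which is at least $\mathcal{H}_{inf}^\chi v(s,y)$. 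Hence $v(s,y) \geq \min\{v(s,y+g_\xi(s,y;\xi))-c(s,y;\xi),\, \mathcal{H}_{inf}^\chi v(s,y)\}$. Since $\mathcal{H}_{inf}^\chi v(s,y)$ does not depend on $\xi$, the elementary identity $\sup_\xi \min\{C,A_\xi\}=\min\{C,\sup_\xi A_\xi\}$ gives (ii) upon taking the supremum over $\xi \in U$.

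The main technical obstacle will be justifying the passage $h\downarrow 0$ in step (ii) rigorously: one does not prescribe the minimizer's action but rather picks an $\varepsilon$-optimal $v\in\mathcal{V}$ and has to argue that its restriction to the shrinking window $[s,s+h]$ reduces, up to $o(1)$ errors, to the clean dichotomy above. Here the strict positivity $\inf\chi>0$ in $\textbf{H}_{c,\chi}$ rules out accumulation of minimizer impulses near $s$, the sub-additivity in $\textbf{H}_{c,\chi}$ rules out multiple simultaneous impulses from being optimal, and Propositions \ref{Proposition2.1}, \ref{Proposition2.4} and \ref{Proposition2.5} together supply the trajectory estimates and uniform continuity needed to pass to the limit uniformly in the minimizer's choice of $v$.
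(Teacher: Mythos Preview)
Your proposal is correct and follows the same two-step structure as the paper: first show $v\le\mathcal{H}_{inf}^\chi v$ by having the minimizer impulse immediately, then establish the complementary lower bound. The paper phrases the second step as the implication ``if $v<\mathcal{H}_{inf}^\chi v$ then $v\ge\mathcal{H}_{sup}^c v$'' (logically equivalent to your inequality~(ii)) and obtains it by invoking the DPP on a degenerate interval (written there as ``$T=0$'') rather than by your limiting argument on $[s,s+h]$ as $h\downarrow 0$; the underlying idea is the same.
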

\begin{proof}
We give only the proof for the lower value $V^-$, similarly for the upper value $V^+$. Let $(s,y)\in[t,T)\times\mathbb{R}^n$, $\psi:=\bigl(\theta(.),u:=(\tau_m,\xi_m)_{m\in\mathbb{N}^*}\bigr)\in\Psi$, then consider the non-anticipative strategy $\beta\in\mathcal{B}$ for player$-\eta$ where $\beta(\psi)=v:=(\rho_k,\eta_k)_{k\in\mathbb{N}^*}\in\mathcal{V}$. Now choose $\beta^\prime\in\mathcal{B}$ such that $\beta^\prime\bigl(\theta(.),u\bigr):=\bigl(s,\eta;\rho_2,\eta_2;\rho_3,\eta_3;\dots\bigr)$, we then obtain $$V^-(s,y)\leq\sup_{\psi\in\Psi}J\bigl(s,y;\psi,\beta^\prime(\psi)\bigr),$$
thus
$$V^-(s,y)=\sup_{\psi\in\Psi}J\bigl(s,y+g_\eta(s,y;\eta);\psi,\beta(\psi)\bigr)+\chi(s,y;\eta),$$
from which we get
$$V^-(s,y)\leq V^-\bigl(s,y+g_\eta(s,y;\eta)\bigr)+\chi(s,y;\eta).$$
Then from the arbitrariness of $\eta$ we get $$V^-(s,y)\leq\mathcal{H}_{inf}^\chi V^-(s,y).$$ Next, we assume that $V^-(s,y)<\mathcal{H}_{inf}^\chi V^-(s,y)$ for some $(s,y)\in[t,T)\times\mathbb{R}^n$. The dynamic programming property (\ref{equation_2.1}) for the lower value, when $T=0$, yields
\begin{equation*}
\begin{aligned}
V^{-}(s,y)=&\inf_{\rho_0\in\{s,T\},\;\eta\in V}\sup_{\underset{\tau_0\in\{s,T\},\;\xi\in U}{\theta(.)\in\Theta}}\Bigl[ -c(s,y;\xi)\ind_{\{\tau_0=s\}}\ind_{\{\rho_0=T\}}+\chi(s,y;\eta)\ind_{\{\rho_0=s\}}\\
&+V^-\bigl(s,y+g_\xi(s,y;\xi)\ind_{\{\tau_0=s\}}\ind_{\{\rho_0=T\}}+g_\eta(s,y;\eta)\ind_{\{\rho_0=s\}}\bigr)\Bigr],
\end{aligned}
\end{equation*}
therefore
\begin{equation*}
\begin{aligned}
V^-(s,y)=&\inf_{\rho_0\in\{s,T\}}\biggl[\inf_{\eta\in V}\Bigl[\chi(s,y;\eta)+V^-\bigl(s,y+g_\eta(s,y;\eta)\bigr)\Bigr]\ind_{\{\rho_0=s\}}\\
&+\sup_{\underset{\tau_0\in\{s,T\},\;\xi\in U}{\theta(.)\in\Theta}}\Bigl[-c(s,y;\xi)\ind_{\{\tau_0=s\}}+V^-\bigl(s,y+g_\xi(s,y;\xi)\ind_{\{\tau_0=0\}}\bigr)\Bigr]\ind_{\{\rho_0=T\}}\biggr].
\end{aligned}
\end{equation*}
From the fact that $V^-(s,y)<\mathcal{H}_{inf}^\chi V^-(s,y)$, we get
$$V^-(s,y)=\sup_{\tau_0\in\{s,T\},\;\xi\in U}\Bigl[-c(s,y;\xi)\ind_{\{\tau_0=s\}}+V^-\bigl(s,y+g_\xi(s,y;\xi)\ind_{\{\tau_0=s\}}\bigr)\Bigr].$$
Hence
$$V^-(s,y)\geq\sup_{\xi\in U}\Bigl[V^-\bigl(s,y+g_\xi(s,y;\xi)\bigr)-c(s,y;\xi)\Bigr],$$
which completes the proof.
\end{proof}
\begin{remark}\label{Remark3.1}
From the above Lemma \ref{Lemma3.1} one may deduce, for any $(s,y)\in[t,T)\times\mathbb{R}^n$, that the lower value and the upper value satisfy the following:
\begin{enumerate}[i.]
\item $v(s,y)\leq\mathcal{H}_{inf}^\chi v(s,y)$;
\item $\text{If}\;v(s,y)<\mathcal{H}_{inf}^\chi v(s,y)\;\text{then}\;v(s,y)\geq\mathcal{H}_{sup}^c v(s,y).$
\end{enumerate}
So we may regard $\mathcal{H}_{sup}^c v(s,y)$ as a lower obstacle and $\mathcal{H}_{inf}^\chi v(s,y)$ as an upper obstacle. \qed
\end{remark}
\begin{lemma}\label{Lemma3.2}
Let assumptions $\textbf{H}_b$, $\textbf{H}_g$, $\textbf{H}_f$, $\textbf{H}_{c,\chi}$ and $\textbf{H}_{G}$ hold. Given any function $\phi\in C^{1,1}\bigl([t,T)\times\mathbb{R}^n\bigr)$ such that for all $(s,y)\in[t,T)\times\mathbb{R}^n$ we have
$$-\frac{\partial}{\partial s}\phi(s,y)+\lambda\phi(s,y)+H\bigl(s,y,D_y\phi(s,y)\bigr)=\gamma>0,$$
then there exists a non-anticipative strategy $\beta^\gamma\in\mathcal{B}$ for minimizing player$-\eta$ such that, for any $\psi:=\bigl(\theta(.),u\bigr)\in\Psi$ and $s$ tends to $t$, we have that
\begin{equation*}
\begin{aligned}
\int_{t}^{s}\Bigl\{&\frac{\partial}{\partial r}\phi\bigl(r,y_{t,x}^{\psi,\beta^\gamma(\psi)}(r)\bigr)-\lambda \phi\bigl(r,y_{t,x}^{\psi,\beta^\gamma(\psi)}(r)\bigr)+D_y\phi\bigl(r,y_{t,x}^{\psi,\beta^\gamma(\psi)}(r)\bigr).b\bigl(r,y_{t,x}^{\psi,\beta^\gamma(\psi)}(r);\theta(r)\bigr)\\
&+f\bigl(r,y_{t,x}^{\psi,\beta^\gamma(\psi)}(r);\theta(r)\bigr)\Bigr\}\exp\bigl(-\lambda(r-t)\bigr)dr\leq-\frac{\gamma}{4}(s-t),
\end{aligned}
\end{equation*}
where $\beta^\gamma(\psi)\in\mathcal{V}.$
\end{lemma}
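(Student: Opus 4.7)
The plan is to convert the hypothesis into a uniform pointwise bound and then integrate along the controlled trajectory; the particular non-anticipative strategy $\beta^\gamma$ will turn out to play essentially no role.

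First I would rewrite the hypothesis using the definition of the Hamiltonian. Since $H(s,y,D_y\phi)=\inf_{\theta\in\mathbb{R}^l}\{-D_y\phi\cdot b(s,y;\theta)-f(s,y;\theta)\}$, the assumed identity $-\partial_s\phi+\lambda\phi+H(s,y,D_y\phi)=\gamma$ is equivalent to
$$\sup_{\theta\in\mathbb{R}^l}\Bigl\{\partial_s\phi(s,y)-\lambda\phi(s,y)+D_y\phi(s,y)\cdot b(s,y;\theta)+f(s,y;\theta)\Bigr\}=-\gamma.$$
Because this is postulated at \emph{every} point $(s,y)\in[t,T)\times\mathbb{R}^n$, I get the global pointwise bound
$$\partial_s\phi(s,y)-\lambda\phi(s,y)+D_y\phi(s,y)\cdot b(s,y;\theta)+f(s,y;\theta)\leq-\gamma\quad(\star)$$
valid for all $(s,y,\theta)\in[t,T)\times\mathbb{R}^n\times\mathbb{R}^l$.

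Next I would take $\beta^\gamma\in\mathcal{B}$ to be any non-anticipative strategy for player$-\eta$, the most economical choice being the trivial one producing no $\eta$-impulse. For any $\psi=(\theta(\cdot),u)\in\Psi$, Proposition \ref{Proposition2.1} guarantees the existence of the trajectory $r\mapsto y^{\psi,\beta^\gamma(\psi)}_{t,x}(r)$ on $[t,T]$; it is c\`adl\`ag with at most countably many jump times coming from $u$, so the jump set has Lebesgue measure zero. Plugging $(s,y,\theta)=\bigl(r,y^{\psi,\beta^\gamma(\psi)}_{t,x}(r),\theta(r)\bigr)$ into $(\star)$ for a.e. $r\in[t,s]$, multiplying by the positive weight $\exp(-\lambda(r-t))$ and integrating over $[t,s]$ gives
$$\int_t^s\Bigl\{\partial_r\phi-\lambda\phi+D_y\phi\cdot b+f\Bigr\}\bigl(r,y^{\psi,\beta^\gamma(\psi)}_{t,x}(r);\theta(r)\bigr)\exp\bigl(-\lambda(r-t)\bigr)dr\leq-\gamma\int_t^s\exp\bigl(-\lambda(r-t)\bigr)dr.$$

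To conclude, I would use $\exp(-\lambda(r-t))\geq\exp(-\lambda(s-t))$ on $[t,s]$ and restrict $s$ to be close enough to $t$ so that $\exp(-\lambda(s-t))\geq 1/4$, i.e. $s-t\leq(\ln 4)/\lambda$. Then $\int_t^s\exp(-\lambda(r-t))dr\geq(s-t)/4$ and the right-hand side above is bounded by $-\gamma(s-t)/4$, which is exactly the announced estimate. The only delicate point is the measurability of the integrand along the c\`adl\`ag trajectory, which is immediate from the continuity assumptions on $\phi$, $b$ and $f$ and the fact that the jumps form a null set; there is no substantial obstacle. The apparent freedom in choosing $\beta^\gamma$ is not an accident but reflects the uniformity of the hypothesis in $(s,y)$: the statement collapses to a purely pointwise, integrated inequality.
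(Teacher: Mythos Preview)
Your argument is correct for the lemma exactly as stated, and it is shorter than the paper's. The key observation you make---that the hypothesis is assumed at \emph{every} point $(s,y)$, so the pointwise bound $(\star)$ holds globally and can simply be evaluated along any trajectory---is legitimate and makes the compactness and continuity machinery unnecessary.

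The paper proceeds differently: it fixes the single point $(s,y)$, sets $\Gamma(s',z;\theta)=-\partial_s\phi+\lambda\phi-D_y\phi\cdot b-f$, notes $\inf_\theta\Gamma(s,y;\theta)=\gamma$, then invokes uniform continuity in $\theta$ and (after quietly assuming the control set is compact) a finite cover to get $\Gamma(s,y;\cdot)\geq 3\gamma/4$ uniformly in $\theta$, and finally uses continuity of $\Gamma$ together with the trajectory estimate of Proposition~\ref{Proposition2.1} to propagate $\Gamma\geq\gamma/2$ to nearby points $\bigl(s,y_{t,x}(s)\bigr)$ before integrating. In other words, the paper's proof is really designed for the situation where the equation $-\partial_s\phi+\lambda\phi+H=\gamma$ is only known at one point---which is precisely how the lemma is \emph{used} in Theorem~\ref{theorem3.1} (the identity holds only at the test point $(\bar t,\bar x)$). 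Your direct argument exploits the stronger global hypothesis in the lemma's wording and therefore bypasses all of this; what you gain is brevity, what the paper's argument gains is that it actually survives when the hypothesis is localized, as the application requires. If you later need the lemma in its applied (pointwise) form, you would have to reinstate a continuity step of the paper's kind.
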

\begin{proof}
Let $(s,y)\in[t,T)\times\mathbb{R}^n$ and $\phi\in C^{1,1}\bigl([t,T)\times\mathbb{R}^n\bigr)$ be such that
\begin{equation}\label{equation_3.1}
-\frac{\partial}{\partial s}\phi(s,y)+\lambda\phi(s,y)+H\bigl(s,y,D_y\phi(s,y)\bigr)=\gamma>0.
\end{equation}
Following \cite{BC97}, we define for $s^\prime>0$, $z\in\mathbb{R}^n$, $\theta(.)\in\Theta$,
$$\Gamma\bigl(s^\prime,z;\theta(s^\prime)\bigr)=-\frac{\partial}{\partial s}\phi(s^\prime,z)+\lambda\phi(s^\prime,z)-D_y\phi(s^\prime,z).b\bigl(s^\prime,z;\theta(s^\prime)\bigr)-f\bigl(s^\prime,z;\theta(s^\prime)\bigr).$$
By (\ref{equation_3.1}) and the definition of the Hamiltonian ($H$) we get $$\inf_{\theta\in\mathbb{R}^l}\Gamma(s,y;\theta)=\gamma,$$
then for any $\theta(.)\in\Theta$ we have $\Gamma\bigl(s,y;\theta(t)\bigr)\geq\gamma$. Since $\theta\rightarrow\Gamma\bigl(s,y;\theta\bigr)$ is UC in $\mathbb{R}^l$, we have in fact
$$\Gamma\bigl(s,y;\zeta(.)\bigr)\geq\frac{3\gamma}{4}\;\text{for all}\;\zeta(.)\in B_{r(.)}\bigl(\theta(.)\bigr)\cap\Theta,$$
where $B_{r(.)}\bigl(\theta(.)\bigr)$ denotes the open ball of radius $r(.):=r\bigl(\theta(.)\bigr)>0$ centered at $\theta(.)$. Without loss of generality, for $\kappa$ a compact subset of $\mathbb{R}^l$ and $\Theta$ being $\kappa-$valued, there exist finitely many points $\bigl(\theta_1(.),\theta_2(.),\dots,\theta_n(.)\bigr)$ and $\bigl(r_1(.),r_2(.),\dots,r_n(.)\bigr)$ such that $\theta_i(.)\subset\kappa$, $r_i(.)>0$ for $i=1,2,\dots,n$ and $$\Theta\subseteq\cup_{i=1}^{n}B_{r_i(.)}\bigl(\theta_i(.)\bigr),$$
where $r_i(.):=r_i\bigl(\theta_i(.)\bigr)>0$, and
$$\Gamma\bigl(s,y;\zeta(.)\bigr)\geq\frac{3\gamma}{4}\;\text{for all}\;\zeta(.)\in B_{r_i(.)}\bigl(\theta_i(.)\bigr)\cap\Theta.$$
By the continuity of $\Gamma$ and Proposition \ref{Proposition2.1} there exists $t^\prime>0$ such that
$$\Gamma\Bigl(s,y_{t,x}(s);\theta(s)\Bigr)\geq\frac{\gamma}{2}\;\text{for all}\;t\leq s\leq t^\prime\;\text{and all}\;\theta(.)\in\Theta.$$
Finally we multiply both sides of the last inequality by $\exp(-\lambda s)$ and integrate from $t$ to $t^\prime$ to obtain the result for $t^\prime-t$ small enough.
\end{proof}
We are now in a position to prove the following Theorem \ref{theorem3.1}:
\begin{theorem}\label{theorem3.1}
Assume $\textbf{H}_b$, $\textbf{H}_g$, $\textbf{H}_f$ $\textbf{H}_{c,\chi}$ and $\textbf{H}_{G}$. The lower value and the upper value are viscosity solutions to the Hamilton-Jacobi-Bellman-Isaacs equation (HJBI).
\end{theorem}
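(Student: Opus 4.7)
The plan is to establish the sub-solution and super-solution inequalities for $V^-$ separately, and then to run the symmetric argument for $V^+$ (swapping the roles of the non-anticipative strategies $\mathcal{A}$ and $\mathcal{B}$). The three pillars are the dynamic programming principle (Theorem~\ref{DPP}), the obstacle inequalities collected in Lemma~\ref{Lemma3.1} and Remark~\ref{Remark3.1}, and the integral estimate of Lemma~\ref{Lemma3.2}.

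For the sub-solution property, I fix $(\bar s,\bar y)\in[t,T)\times\mathbb{R}^n$ and a test $\phi\in C^{1,1}$ with $V^-(\bar s,\bar y)=\phi(\bar s,\bar y)$ and $(\bar s,\bar y)$ a local maximum of $V^--\phi$. The outer-max component $V^-(\bar s,\bar y)-\mathcal{H}_{inf}^\chi V^-(\bar s,\bar y)\leq 0$ is already furnished by Lemma~\ref{Lemma3.1}, so it suffices to show that the inner minimum is nonpositive. Arguing by contradiction, assume that at $(\bar s,\bar y)$ both $V^--\mathcal{H}_{sup}^c V^-\geq 2\gamma$ and $-\partial_s\phi+\lambda V^-+H(\bar s,\bar y,D_y\phi)\geq 2\gamma$ for some $\gamma>0$. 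By continuity of $V^-$ and of the obstacle operators (Propositions~\ref{Proposition2.3}, \ref{Proposition2.4}, \ref{Proposition2.5}), the gap above the lower obstacle persists on a small parabolic neighbourhood, so any impulse of the maximizing player over a short window $[\bar s,t']$ is strictly sub-optimal. Lemma~\ref{Lemma3.2} then produces a non-anticipative $\beta^\gamma\in\mathcal{B}$ such that the integrand built from $\phi$ decays at rate $-\gamma/4$. Plugging $\beta^\gamma$ into the DPP identity (\ref{equation_2.1}), using the local-maximum inequality $V^-(t',y^{\psi,\beta^\gamma(\psi)}(t'))\leq\phi(t',y^{\psi,\beta^\gamma(\psi)}(t'))$, and expanding $\phi(t',\cdot)e^{-\lambda(t'-\bar s)}-\phi(\bar s,\bar y)$ via the fundamental theorem of calculus, yields $V^-(\bar s,\bar y)\leq V^-(\bar s,\bar y)-\tfrac{\gamma}{4}(t'-\bar s)+o(t'-\bar s)$, impossible for $t'$ just above $\bar s$.

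For the super-solution property, let $(\underline s,\underline y)$ and $\phi\in C^{1,1}$ be such that $V^-(\underline s,\underline y)=\phi(\underline s,\underline y)$ at a local minimum of $V^--\phi$. If $V^-(\underline s,\underline y)=\mathcal{H}_{inf}^\chi V^-(\underline s,\underline y)$ the outer max is already nonnegative. Otherwise Remark~\ref{Remark3.1} supplies $V^-(\underline s,\underline y)\geq\mathcal{H}_{sup}^c V^-(\underline s,\underline y)$, and the task reduces to proving $-\partial_s\phi+\lambda V^-+H(\underline s,\underline y,D_y\phi)\geq 0$. I again argue by contradiction: if that quantity equals $-\gamma<0$, choose $\theta^*\in\mathbb{R}^l$ realizing the Hamiltonian infimum up to precision $\gamma/4$, let player$-\xi$ play the constant continuous control $\theta^*$ with no impulses, and apply the DPP. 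The key point is that the gap $\mathcal{H}_{inf}^\chi V^-(\underline s,\underline y)-V^-(\underline s,\underline y)>0$, combined with the strict positivity of $\chi$ from $\textbf{H}_{c,\chi}$, makes any impulse by $\beta$ on a short window $[\underline s,t']$ strictly sub-optimal for the minimizer, effectively reducing the inf over $\mathcal{B}$ to impulse-free strategies. For such $\beta$ the local-minimum inequality and the fundamental theorem of calculus applied to $s\mapsto\phi(s,y(s))e^{-\lambda(s-\underline s)}$ give $V^-(\underline s,\underline y)\geq V^-(\underline s,\underline y)+\tfrac{\gamma}{4}(t'-\underline s)+o(t'-\underline s)$, the desired contradiction.

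The main obstacle throughout is the bookkeeping of impulses on the short time window: one must rigorously justify, in both verifications, that strict separation from the relevant obstacle (the lower obstacle $\mathcal{H}_{sup}^c V^-$ in the sub-solution step and the upper obstacle $\mathcal{H}_{inf}^\chi V^-$ in the super-solution step) forces the optimizing player's impulses to be locally sub-optimal, so that the DPP may effectively be reduced to its continuous-ODE part. This rests on the uniform Lipschitz regularity of $V^-$ (Propositions~\ref{Proposition2.4} and~\ref{Proposition2.5}) and on the uniform lower bound of $c$ and $\chi$ from assumption $\textbf{H}_{c,\chi}$; the argument for $V^+$ is the entirely symmetric one obtained by interchanging sup and inf and the roles of $\mathcal{A}$ and $\mathcal{B}$.
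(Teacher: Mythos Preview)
Your proposal is correct and follows essentially the same route as the paper's proof: both arguments split into sub- and super-solution verifications, invoke Lemma~\ref{Lemma3.1}/Remark~\ref{Remark3.1} for the obstacle inequalities, use Lemma~\ref{Lemma3.2} (respectively a direct choice of a near-optimal $\theta^*$) to control the Hamiltonian part, and close via the DPP together with the local extremum inequality $V^-\lessgtr\phi$. Your treatment of why impulses may be ignored on a short window is in fact more explicit than the paper's, which simply restricts to $t'-t<\tau_0\wedge\rho_0$ without further comment.
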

\begin{proof}
We give the proof for the lower value $V^-$, similarly for the upper value $V^+$. The proof is inspired from \cite{Ba85,BC97} and based on DPP. We start by proving the sub-solution property. Let $\phi$ be a function in $C^{1,1}\bigl([t,T)\times\mathbb{R}^n\bigr)$ and $(\overline{t},\overline{x})\in[t,T)\times\mathbb{R}^n$ be such that $V^{-}-\phi$ achieves a local maximum at $(\overline{t},\overline{x})$ and $V^-(\overline{t},\overline{x})=\phi(\overline{t},\overline{x})$. If $V^-(\overline{t},\overline{x})-\mathcal H_{sup}^c V^-(\overline{t},\overline{x})\leq 0$ the prove is finished, since from Remark \ref{Remark3.1} $V^-(\overline{t},\overline{x})\leq\mathcal H_{inf}^\chi V^-(\overline{t},\overline{x})$. Otherwise, for $\varepsilon>0$ and without loss of generality, we assume that $V^-(\overline{t},\overline{x})-\mathcal H_{sup}^c V^-(\overline{t},\overline{x})\geq\varepsilon>0$, then we proceed by contradiction. Since, from Remark \ref{Remark3.1}, we have $V^-(\overline{t},\overline{x})\leq\mathcal H_{inf}^\chi V^-(\overline{t},\overline{x})$, we now explore the result of Lemma \ref{Lemma3.2} by assuming first that
$$-\frac{\partial\phi}{\partial s}(\overline{t},\overline{x})+\lambda\phi(\overline{t},\overline{x})+H\bigl(\overline{t},\overline{x},D_y\phi(\overline{t},\overline{x})\bigr)=\gamma>0,$$
then one can find a non-anticipative strategy $\beta^\gamma\in\mathcal{B}$ for minimizing player$-\eta$ such that, for any $\psi:=\bigl(\theta(.),u\bigr)\in\Psi$ and $s$ tends to $t$, we have that
\begin{equation*}
\begin{aligned}
\int_{t}^{s}\Bigl\{&\frac{\partial}{\partial r}\phi\bigl(r,y_{\overline t,\overline x}^{\psi,\beta^\gamma(\psi)}(r)\bigr)-\lambda \phi\bigl(r,y_{\overline t,\overline x}^{\psi,\beta^\gamma(\psi)}(r)\bigr)+D_y\phi\bigl(r,y_{\overline t,\overline x}^{\psi,\beta^\gamma(\psi)}(r)\bigr).b\bigl(r,y_{\overline t,\overline x}^{\psi,\beta^\gamma(\psi)}(r);\theta(r)\bigr)\\
&+f\bigl(r,y_{\overline t,\overline x}^{\psi,\beta^\gamma(\psi)}(r);\theta(r)\bigr)\Bigr\}\exp\bigl(-\lambda (r-t)\bigr)dr\leq-\frac{\gamma}{4}(s-t),
\end{aligned}
\end{equation*}
where $\beta^\gamma(\psi)\in\mathcal{V}$. Thus,
\begin{equation}\label{equation_3.2}
\int_{t}^{s}f\bigl(r,y_{\overline t,\overline x}^{\psi,\beta^\gamma(\psi)}(r);\theta(r)\bigr)\exp\bigl(-\lambda(r-t)\bigr)dr+\exp\bigl(-\lambda(s-t)\bigr)\phi\bigl(s,y_{\overline t,\overline x}^{\psi,\beta^\gamma(\psi)}(s)\bigr)-\phi(\overline t,\overline x)\leq-\frac{\gamma}{4}(s-t).
\end{equation}
Since $V^{-}-\phi$ has a local maximum at $(\overline t,\overline x)$ and $V^{-}(\overline t,\overline x)=\phi(\overline t,\overline x)$ we have, for $t=\overline{t}$ and $s-t$ small enough, that
$$\bigl\|y_{\overline t,\overline x}^{\psi,\beta^\gamma(\psi)}(s)-\overline x\bigr\|\rightarrow 0,$$
which yields
$$\exp\bigl(-\lambda(s-t)\bigr)\phi\bigl(s,y_{\overline t,\overline x}^{\psi,\beta^\gamma(\psi)}(s)\bigr)-\phi(\overline t,\overline x)\geq\exp\bigl(-\lambda(s-t)\bigr)V^{-}\bigl(s,y_{\overline t,\overline x}^{\psi,\beta^\gamma(\psi)}(s)\bigr)-V^{-}(\overline t,\overline x).$$
By plugging this into the inequality (\ref{equation_3.2}) we obtain, for $s=t^\prime$ and $t^\prime-t$ small enough,
\begin{equation*}
\begin{aligned}
\inf_{\beta\in\mathcal{B}}\sup_{\psi\in\Psi}\biggl\{&\int_t^{t^\prime} f\bigl(r,y_{\overline t,\overline x}^{\psi,\beta(\psi)}(r);\theta(r)\bigr)\exp\bigl(-\lambda(r-t)\bigr)dr\\
&+V^-\bigl(t^\prime,y_{\overline t,\overline x}^{\psi,\beta(\psi)}(t^\prime)\bigr)\exp\bigl(-\lambda(t^\prime-t)\bigr)\biggr\}-V^{-}(\overline t,\overline x)\leq-\frac{\gamma}{4}(t^\prime-t)<0,
\end{aligned}
\end{equation*}
which, without loss of generality when $t=\overline{t}$ and $t^\prime-t<\tau_0\wedge\rho_0$ and taking into account the DPP (\ref{equation_2.1}), yields a contradiction. Hence the lower value $V^{-}$ is a viscosity sub-solution to the HJBI equation.\\
\par Next, we show the super-solution property. Let $\phi$ be a function in $C^{1,1}\bigl([t,T)\times\mathbb{R}^n\bigr)$ and $(\underline{t},\underline{x})\in[t,T)\times\mathbb{R}^n$ be such that $V^{-}-\phi$ achieves a local minimum at $(\underline{t},\underline{x})$ in $I\times B_{\delta}(\underline x)$, where $B_{\delta}(\underline x)$ is the open ball of radius $\delta>0$ centered at $\underline x$ and $I:=[\underline t-\delta,\underline t+\delta]$, and $V^-(\underline t,\underline x)=\phi(\underline t,\underline x)$. Now, for $\varepsilon>0$ and without loss of generality, we assume that $V^-(\underline t,\underline x)-\mathcal H_{inf}^\chi V^-(\underline t,\underline x)<\varepsilon<0$ on $I\times B_\delta(\underline x)$, otherwise, i.e., $V^-(\underline t,\underline x)=\mathcal H_{inf}^\chi V^-(\underline t,\underline x)$, the proof is finished. Therefore Remark \ref{Remark3.1} leads us to $V^-(\underline t,\underline x)\geq\mathcal H_{sup}^c V^-(\underline t,\underline x)$. We define
$$s'=\inf\bigl\{s\geq t:s\notin I\;\text{and}\;y_{\underline{t},\underline{x}}(s)\notin B_\delta(\underline x)\bigr\},$$
then we let $t\leq s\leq s'$, and we proceed by contradiction. Assume that
$$-\frac{\partial\phi}{\partial s}(\underline t,\underline x)+\lambda\phi(\underline t,\underline x)+H\bigl(\underline t,\underline x,D_y\phi(\underline t,\underline x)\bigr)=-\gamma<0.$$
By the definition of the first-order Hamiltonian ($H$), one can find an element $\theta$ of $\mathbb{R}^l$ such that
$$-\frac{\partial\phi}{\partial s}(\underline t,\underline x)+\lambda \phi(\underline t,\underline x)-D_y\phi(\underline t,\underline x).b(\underline t,\underline x;\theta)-f(\underline t,\underline x;\theta)\leq-\gamma.$$
Thus, there exists a non-anticipative strategy $\alpha^\gamma\in\mathcal{A}$ for maximizing player$-\xi$ such that for any $v\in\mathcal{V}$, $\alpha^\gamma(v)=\psi^\gamma:=\bigl(\theta(.),u\bigr)$, and for $s-t$ small enough and any $\beta\in\mathcal{B}$, we have that
\begin{equation*}
\begin{aligned}
-\frac{\partial}{\partial s}\phi\bigl(s,y_{\underline t,\underline x}^{\psi^\gamma,\beta(\psi^\gamma)}(s)\bigr)&+\lambda\phi\bigl(s,y_{\underline t,\underline x}^{\psi^\gamma,\beta(\psi^\gamma)}(s)\bigr)-D_y\phi\bigl(s,y_{\underline t,\underline x}^{\psi^\gamma,\beta(\psi^\gamma)}(s)\bigr).b\bigl(s,y_{\underline t,\underline x}^{\psi^\gamma,\beta(\psi^\gamma)}(s);\theta(s)\bigr)\\
&-f\bigl(s,y_{\underline t,\underline x}^{\psi^\gamma,\beta(\psi^\gamma)}(s);\theta(s)\bigr)\leq-\frac{\gamma}{2}.
\end{aligned}
\end{equation*}
Now we multiply both sides of the last inequality by $\exp\bigl(-\lambda (s-t)\bigr)$ and integrate from $t$ to $t^\prime$ to obtain
\begin{equation}\label{equation_3.3}
\phi(\underline t,\underline x)-\exp\bigl(-\lambda(t^\prime-t)\bigr)\phi\bigl(t^\prime,y_{\underline t,\underline x}^{\psi^\gamma,\beta(\psi^\gamma)}(t^\prime)\bigr)-\int_{t}^{t^\prime}f\bigl(s,y_{\underline t,\underline x}^{\psi^\gamma,\beta(\psi^\gamma)}(s);\theta(s)\bigr)\exp\bigl(-\lambda(s-t)\bigr)ds\leq-\frac{\gamma}{4}(t^\prime-t).
\end{equation}
Since $V^{-}-\phi$ has a local minimum at $(\underline t,\underline x)$ and $V^{-}(\underline t,\underline x)=\phi(\underline t,\underline x)$ we have, for $t=\underline{t}$ and $s-t$ small enough, that
$$\bigl\|y_{\underline t,\underline x}^{\psi^\gamma,\beta(\psi^\gamma)}(s)-\underline x\bigr\|\rightarrow 0,$$
which gives
$$\exp\bigl(-\lambda(s-t)\bigr)\phi\bigl(s,y_{\underline t,\underline x}^{\psi^\gamma,\beta(\psi^\gamma)}(s)\bigr)-\phi(\underline t,\underline x)\leq\exp\bigl(-\lambda (s-t)\bigr)V^{-}\bigl(y_{\underline t,\underline x}^{\psi^\gamma,\beta(\psi^\gamma)}(s)\bigr)-V^{-}(\underline t,\underline x),$$
thus
$$\exp\bigl(-\lambda (t^\prime-t)\bigr)V^{-}\bigl(t^\prime,y_{\underline t,\underline x}^{\psi^\gamma,\beta(\psi^\gamma)}(t^\prime)\bigr)+\int_{t}^{t^\prime}f\bigl(s,y_{\underline t,\underline x}^{\psi^\gamma,\beta(\psi^\gamma)}(s);\theta(s)\bigr)\exp\bigl(-\lambda (s-t)\bigr)ds\geq\frac{\gamma}{2}(t^\prime-t)+V^{-}(\underline t,\underline x).$$
By plugging this into (\ref{equation_3.3}), for $t^\prime-t$ small enough, we obtain
\begin{equation*}
\begin{aligned}
\inf_{\beta\in\mathcal{B}}\sup_{\psi\in\Psi}\biggl\{&\int_t^{t^\prime}f\bigl(s,y_{\underline t,\underline x}^{\psi,\beta(\psi)}(s);\theta(s)\bigr)\exp\bigl(-\lambda(s-t)\bigr)ds\\
&+V^-\bigl(t^\prime,y_{\underline t,\underline x}^{\psi,\beta(\psi)}(t^\prime)\bigr)\exp\bigl(-\lambda (t^\prime-t)\bigr)\biggr\}-V^{-}(\underline t,\underline x)>0,
\end{aligned}
\end{equation*}
which, without loss of generality when $t=\underline{t}$ and $t^\prime-t<\tau_0\wedge\rho_0$ and taking into account the DPP (\ref{equation_2.1}), yields a contradiction, then the lower value $V^{-}$ is a viscosity super-solution to the HJBI equation. Hence deducing the thesis.
\end{proof}
\subsection{Uniqueness of Viscosity Solutions for the HJBI Equation}\label{Sect.3.2}
This section proves the \textit{comparison principle} of viscosity solutions to the HJBI equation, and shows that this equation has a unique BUC VS. As a consequence, the value functions coincide, since they are viscosity solutions to the HJBI equation. Thus the zero-sum DG considered has a value. We start by proving the useful Proposition \ref{Proposition3.1}, which is inspired from \cite{El17,El17_2}.
\begin{proposition}\label{Proposition3.1}
The Hamilton-Jacobi-Bellman-Isaacs equation (HJBI) is equivalent to the following equation:
\begin{equation*}
\left\{
\begin{aligned}
&
\begin{aligned}
\lambda v(s,y)=\min_{i\in\{0,1\}}\biggl\{(1-i)\max_{j\in\{0,1\}}\Bigl[&(1-j)\Bigl(\frac{\partial}{\partial s}v(s,y)+\sup_{\theta\in\mathbb{R}^l}\bigl\{D_yv(s,y).b(s,y;\theta)+f(s,y;\theta)\bigr\}\Bigr)\\
&+j\lambda\mathcal{H}_{sup}^c v(s,y)\Bigr]+i\lambda\mathcal{H}_{inf}^\chi v(s,y)\biggr\},\;\text{on}\;[t,T)\times\mathbb{R}^n.
\end{aligned}\\
&v(T,y)=G(y)\;\text{for all}\;y\in\mathbb{R}^n.
\end{aligned}
\right.
\end{equation*}
\end{proposition}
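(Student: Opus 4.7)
The plan is to unfold the two binary extrema in the proposed equation and then apply the involutive identities $-\min\{a,b\}=\max\{-a,-b\}$ and $-\max\{a,b\}=\min\{-a,-b\}$ to recover the HJBI form, up to a positive multiplicative rescaling that leaves the zero-set, and hence the viscosity formulation, invariant. Since $j\in\{0,1\}$, the inner maximum collapses to $\max\bigl\{A(s,y),\lambda\mathcal{H}_{sup}^cv(s,y)\bigr\}$, where
\begin{equation*}
A(s,y):=\frac{\partial}{\partial s}v(s,y)+\sup_{\theta\in\mathbb{R}^l}\bigl\{D_yv(s,y).b(s,y;\theta)+f(s,y;\theta)\bigr\},
\end{equation*}
and the outer minimum over $i\in\{0,1\}$ collapses to $\min\bigl\{\max\{A,\lambda\mathcal{H}_{sup}^cv\},\lambda\mathcal{H}_{inf}^\chi v\bigr\}$, so the Proposition's equation rewrites as
\begin{equation*}
\lambda v(s,y)=\min\Bigl\{\max\bigl\{A(s,y),\lambda\mathcal{H}_{sup}^cv(s,y)\bigr\},\lambda\mathcal{H}_{inf}^\chi v(s,y)\Bigr\}.
\end{equation*}

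Next I would use the definition of the Hamiltonian to rewrite $A(s,y)=\frac{\partial}{\partial s}v(s,y)-H\bigl(s,y,D_yv(s,y)\bigr)$, so that $\lambda v-A=-\frac{\partial}{\partial s}v+\lambda v+H$. Transferring $\lambda v$ to the right-hand side of the previous display and applying the sign identities mentioned above turns the equation into
\begin{equation*}
\max\Bigl\{\min\bigl[-\frac{\partial}{\partial s}v+\lambda v+H(s,y,D_yv),\lambda(v-\mathcal{H}_{sup}^cv)\bigr],\lambda(v-\mathcal{H}_{inf}^\chi v)\Bigr\}=0,
\end{equation*}
which differs from the HJBI equation only through a positive factor $\lambda$ placed in front of the two obstacle terms.

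Finally I would close the argument by remarking that, because $\lambda>0$, the sign of $\lambda(v-\mathcal{H}_{sup}^cv)$ agrees pointwise with that of $v-\mathcal{H}_{sup}^cv$, and likewise for the $\mathcal{H}_{inf}^\chi$ obstacle. A short case analysis on which argument realises the outer $\max$ shows that the left-hand side of the display above is strictly positive, strictly negative, or zero exactly where the corresponding HJBI expression is; that is, the two nonlinear operators share sign at every point. Since the viscosity sub- and super-solution properties of Definition \ref{definition3.1} depend only on this sign when evaluated against a test function, the two equations admit the same classical solutions and the same viscosity solutions. No serious obstacle is anticipated; the argument is purely algebraic, and the only mild subtlety is to carry the factor $\lambda$ correctly through the rescaling step between the two formulations.
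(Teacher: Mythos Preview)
Your proposal is correct and takes essentially the same approach as the paper: both arguments rest on the observation that the binary extrema over $\{0,1\}$ encode an ordinary two-term $\max$/$\min$, together with the fact that inserting positive scalar weights (here $\lambda$) into the arguments of a nested $\max$--$\min$ preserves the sign of the combination, hence the solution set. The paper states this weight-invariance up front and then performs the algebra toward the isolated $\lambda v$, while you run the algebra first and invoke the invariance at the end; the content is the same.
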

\begin{proof}
For any positive numbers $a,\;b,\;a^\prime$ and $b^\prime$, solving an equation of the form $\max\bigl\{\min[A,B];C\bigr\}=0$ is equivalent to solve the equation
\begin{equation}\label{equivequ}
\max_{i\in\{0,1\}}\Bigl\{(1-i)a\min_{j\in\{0,1\}}\bigl[(1-j)a^\prime A+jb^\prime B\bigr]+ibC\Bigr\}=0,
\end{equation}
the same for the inequalities
$$\max\bigl\{\min[A,B];C\bigr\}\leq 0,\;\text{and}\;\max\bigl\{\min[A,B];C\bigr\}\geq 0.$$
We use (\ref{equivequ}), for $a=a^\prime=1$ and $b=b^\prime=\lambda$, to rewrite the HJBI equation as follows
\begin{equation*}
\begin{aligned}
\max_{i\in\{0,1\}}\biggl\{(1-i)\min_{j\in\{0,1\}}\Bigl[&(1-j)\inf_{\theta\in\mathbb{R}^l}\bigl\{-\frac{\partial}{\partial s}v(s,y)+\lambda v(s,y)-D_yv(s,y).b(s,y;\theta)-f(s,y;\theta)\bigr\}\\
&+j\lambda\bigl(v(s,y)-\mathcal{H}_{sup}^c v(s,y)\bigr)\Bigr]+i\lambda\bigl(v(s,y)-\mathcal{H}_{inf}^\chi v(s,y)\bigr)\biggr\}=0,
\end{aligned}
\end{equation*}
where $v$ being a continuous function in $[t,T]\times\mathbb{R}^n$. We then get
\begin{equation*}
\begin{aligned}
\max_{i\in\{0,1\}}\biggl\{(1-i)\min_{j\in\{0,1\}}\Bigl[&\lambda v(s,y)-j\lambda v(s,y)+(1-j)\inf_{\theta\in\mathbb{R}^l}\bigl\{-\frac{\partial}{\partial s}v(s,y)-D_yv(s,y).b(s,y;\theta)-f(s,y;\theta)\bigr\}\\
&+j\lambda\bigl(v(s,y)-\mathcal{H}_{sup}^c v(s,y)\bigr)\Bigr]+i\lambda\bigl(v(s,y)-\mathcal{H}_{inf}^\chi v(s,y)\bigr)\biggr\}=0,
\end{aligned}
\end{equation*}
thus
\begin{equation*}
\begin{aligned}
\max_{i\in\{0,1\}}\biggl\{(1-i)\min_{j\in\{0,1\}}\Bigl[&\lambda v(s,y)-(1-j)\sup_{\theta\in\mathbb{R}^l}\bigl\{\frac{\partial}{\partial s}v(s,y)+D_yv(s,y).b(s,y;\theta)+f(s,y;\theta)\bigr\}\\
&-j\lambda\mathcal{H}_{sup}^c v(s,y)\Bigr]+i\lambda\bigl(v(s,y)-\mathcal{H}_{inf}^\chi v(s,y)\bigr)\biggr\}=0.
\end{aligned}
\end{equation*}
Then it follows
\begin{equation*}
\begin{aligned}
\min_{i\in\{0,1\}}\biggl\{(1-i)\max_{j\in\{0,1\}}\Bigl[&-\lambda v(s,y)+(1-j)\sup_{\theta\in\mathbb{R}^l}\bigl\{\frac{\partial}{\partial s}v(s,y)+D_yv(s,y).b(s,y;\theta)+f(s,y;\theta)\bigr\}\\
&+j\lambda\mathcal{H}_{sup}^c v(s,y)\Bigr]-i\lambda\bigl(v(s,y)-\mathcal{H}_{inf}^\chi v(s,y)\bigr)\biggr\}=0,
\end{aligned}
\end{equation*}
from which we deduce
\begin{equation*}
\begin{aligned}
\min_{i\in\{0,1\}}\biggl\{-\lambda v(s,y)+(1-i)\max_{j\in\{0,1\}}\Bigl[&(1-j)\sup_{\theta\in\mathbb{R}^l}\bigl\{\frac{\partial}{\partial s}v(s,y)+D_yv(s,y).b(s,y;\theta)+f(s,y;\theta)\bigr\}\\
&+j\lambda\mathcal{H}_{sup}^c v(s,y)\Bigr]+i\lambda\mathcal{H}_{inf}^\chi v(s,y)\biggr\}=0.
\end{aligned}
\end{equation*}
Finally we deduce the desired expression for the HJBI equation
\begin{equation*}
\begin{aligned}
\lambda v(s,y)=\min_{i\in\{0,1\}}\biggl\{(1-i)\max_{j\in\{0,1\}}\Bigl[&(1-j)\Bigl(\frac{\partial}{\partial s}v(s,y)+\sup_{\theta\in\mathbb{R}^l}\bigl\{D_yv(s,y).b(s,y;\theta)+f(s,y;\theta)\bigr\}\Bigr)\\
&+j\lambda\mathcal{H}_{sup}^c v(s,y)\Bigr]+i\lambda\mathcal{H}_{inf}^\chi v(s,y)\biggr\},
\end{aligned}
\end{equation*}
which completes the proof.
\end{proof}
\begin{remark}
By using Proposition \ref{Proposition3.1}, the Definition \ref{definition3.1} of the viscosity solution of the Hamilton-Jacobi-Bellman-Isaacs equation (HJBI) could be rewritten as the following:\\
A continuous function $v$ in $[t,T]\times\mathbb{R}^n$ which satisfies $v(T,y)=G(y)$ for any $y\in\mathbb{R}^n$, is a viscosity sub-solution (resp. super-solution) of the equation if and only if for any function $\phi\in C^{1,1}\bigl([t,T)\times\mathbb{R}^n\bigr)$ and $(\overline s,\overline y)$ $\bigl(\text{resp.}\;(\underline s,\underline y)\bigr)\in [t,T)\times\mathbb{R}^n$ a local maximum (resp. minimum) point of $v-\phi$ such that $v(\overline s,\overline y)=\phi(\overline s,\overline y)$ $\bigl(\text{resp.}\;v(\underline s,\underline y)=\phi(\underline s,\underline y)\bigr)$, we have
\begin{equation*}
\begin{aligned}
\lambda v(\overline s,\overline y)\leq \min_{i\in\{0,1\}}\biggl\{(1-i)\max_{j\in\{0,1\}}\Bigl[&(1-j)\Bigl(\frac{\partial\phi}{\partial s}(\overline s,\overline y)+\sup_{\theta\in\mathbb{R}^l}\Bigl\{D_y\phi(\overline s,\overline y).b(\overline s,\overline y;\theta)+f(\overline s,\overline y;\theta)\Bigr\}\Bigr)\\
&+j\lambda\mathcal{H}_{sup}^c v(\overline s,\overline y)\Bigr]+i\lambda\mathcal{H}_{inf}^\chi v(\overline s,\overline y)\biggr\}
\end{aligned}
\end{equation*}
\begin{equation*}
\begin{aligned}
\biggl(\text{resp.}\;\lambda v(\underline s,\underline y)\geq \min_{i\in\{0,1\}}\biggl\{(1-i)\max_{j\in\{0,1\}}\Bigl[&(1-j)\Bigl(\frac{\partial\phi}{\partial s}(\underline s,\underline y)+\sup_{\theta\in\mathbb{R}^l}\Bigl\{D_y\phi(\underline s,\underline y).b(\underline s,\underline y;\theta)+f(\underline s,\underline y;\theta)\Bigr\}\Bigr)\\
&+j\lambda\mathcal{H}_{sup}^c v(\underline s,\underline y)\Bigr]+i\lambda\mathcal{H}_{inf}^\chi v(\underline s,\underline y)\biggr\}\biggr).
\end{aligned}
\end{equation*}\qed
\end{remark}
Lemma \ref{Lemma3.3} below, for which the proof is obvious, will be useful later for deducing the thesis.
\begin{lemma}\label{Lemma3.3}
If a continuous function $v$ is a viscosity solution to the Hamilton-Jacobi-Bellman-Isaacs equation (HJBI) such that $v(T,y)=G(y)$ for any $y\in\mathbb{R}^n$, then for any $0<\mu<1$ the function $\mu v$ is a viscosity solution to the following equation (HJBI$_\mu$):
\begin{equation*}
\text{(HJBI$_\mu$)}\;\left\{
\begin{aligned}
&
\begin{aligned}
\max\biggl\{\min\Bigl[&-\frac{\partial}{\partial s}v(s,y)+\lambda v(s,y)+H_\mu\bigl(s,y,D_{y}v(s,y)\bigr),v(s,y)-\mathcal{H}_{sup}^{c,\mu}v(s,y)\Bigr];\\
&v(s,y)-\mathcal{H}_{inf}^{\chi,\mu}v(s,y)\biggr\}=0,\;\text{on}\;[t,T)\times\mathbb{R}^n;
\end{aligned}\\
&v(T,y)=\mu G(y)\;\text{for all}\;y\in\mathbb{R}^n,
\end{aligned}
\right.
\end{equation*}
where
$$\mathcal{H}_{sup}^{c,\mu}v(s,y):=\sup_{\xi\in U}\Bigl\{v\bigl(s,y+g_\xi(s,y;\xi)\bigr)-\mu c(s,y;\xi)\Bigr\},$$
$$\mathcal{H}_{inf}^{\chi,\mu} v(s,y):=\inf_{\eta\in V}\Bigl\{v\bigl(s,y+g_\eta(s,y;\eta)\bigr)+\mu\chi(s,y;\eta)\Bigr\},$$
and
$$H_\mu\bigl(s,y,D_{y}v(s,y)\bigr):=\inf_{\theta\in\mathbb{R}^l}\Bigl\{-D_{y}v(s,y).b(s,y;\theta)-\mu f(s,y;\theta)\Bigr\}.$$ \qed
\end{lemma}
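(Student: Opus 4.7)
The strategy is purely algebraic: I would exploit the fact that the Hamiltonian $H$ and the non-local operators $\mathcal{H}_{sup}^c$, $\mathcal{H}_{inf}^\chi$ scale in a compatible way with the multiplicative constant $\mu$, so that the entire HJBI$_\mu$ expression evaluated at $\mu v$ is exactly $\mu$ times the original HJBI expression evaluated at $v$.

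The first step is a direct computation. For $w := \mu v$ and fixed $(s,y) \in [t,T) \times \mathbb{R}^n$, I would verify the four identities
\begin{equation*}
-\tfrac{\partial w}{\partial s}+\lambda w = \mu\bigl(-\tfrac{\partial v}{\partial s}+\lambda v\bigr),\qquad H_\mu\bigl(s,y,D_y w\bigr)=\mu\, H\bigl(s,y,D_y v\bigr),
\end{equation*}
\begin{equation*}
w-\mathcal{H}_{sup}^{c,\mu} w = \mu\bigl(v-\mathcal{H}_{sup}^{c} v\bigr),\qquad w-\mathcal{H}_{inf}^{\chi,\mu} w = \mu\bigl(v-\mathcal{H}_{inf}^{\chi} v\bigr),
\end{equation*}
each of which follows from the linearity of the infimum/supremum under a positive multiplicative factor together with the definitions of $H_\mu$, $\mathcal{H}_{sup}^{c,\mu}$, $\mathcal{H}_{inf}^{\chi,\mu}$. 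Because $\mu>0$ preserves $\max$ and $\min$, multiplying the original HJBI equation by $\mu$ yields the HJBI$_\mu$ equation for $w$ in the pointwise (classical) sense; the terminal condition $w(T,y)=\mu G(y)$ is immediate.

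The second step passes this algebraic equivalence through the viscosity framework. Given a test function $\phi\in C^{1,1}\bigl([t,T)\times\mathbb{R}^n\bigr)$ with $w-\phi$ attaining a local maximum (resp. minimum) at $(\overline s,\overline y)$ and $w(\overline s,\overline y)=\phi(\overline s,\overline y)$, I would introduce $\tilde\phi:=\phi/\mu\in C^{1,1}\bigl([t,T)\times\mathbb{R}^n\bigr)$. Since $\mu>0$, the function $v-\tilde\phi=(w-\phi)/\mu$ attains the same kind of local extremum at $(\overline s,\overline y)$ and $v(\overline s,\overline y)=\tilde\phi(\overline s,\overline y)$. Applying the viscosity sub-solution (resp. super-solution) inequality for $v$ with the test function $\tilde\phi$, and then multiplying the resulting inequality by $\mu>0$, gives exactly the viscosity sub-solution (resp. super-solution) inequality for $w=\mu v$ in HJBI$_\mu$, where I use $D_y\phi=\mu D_y\tilde\phi$ and $\partial_s\phi=\mu\partial_s\tilde\phi$ to convert $\mu H(\overline s,\overline y,D_y\tilde\phi)$ into $H_\mu(\overline s,\overline y,D_y\phi)$.

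There is no genuine obstacle here: the only technical point to double-check is the scaling $H_\mu(s,y,D_y\phi)=\mu H(s,y,D_y\tilde\phi)$ when $\phi=\mu\tilde\phi$, which is clear from Definition of $H_\mu$ in Lemma \ref{Lemma3.3}, namely $\inf_\theta\{-\mu D_y\tilde\phi\cdot b-\mu f\}=\mu\inf_\theta\{-D_y\tilde\phi\cdot b-f\}$. This, together with the positivity of $\mu$ preserving the $\max$ and $\min$ operations, closes the argument, so the conclusion indeed follows in a routine fashion as the author suggests.
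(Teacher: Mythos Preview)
Your proposal is correct and is precisely the natural argument the authors have in mind: the paper does not supply a proof for this lemma at all (it is declared obvious and closed with a \qed), and the scaling-by-$\mu$ argument you give---checking that each term of the HJBI expression scales linearly and then transferring this to the viscosity setting via the rescaled test function $\tilde\phi=\phi/\mu$---is exactly the routine verification that justifies that claim.
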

Now we are in a position to give the proof of the comparison theorem stated as follows:
\begin{theorem}[Comparison Theorem]\label{CThe}
Assume $\textbf{H}_b$, $\textbf{H}_g$, $\textbf{H}_f$, $\textbf{H}_{c,\chi}$ and $\textbf{H}_{G}$. If $u$ and $v$ are, respectively, a bounded uniformly continuous viscosity sub-solution and super-solution to the Hamilton-Jacobi-Bellman-Isaacs equation (HJBI), satisfying $u(T,.)\leq v(T,.)$, then we have $$\forall\;(t,x)\in[0,T]\times\mathbb{R}^n:\;u(t,x)\leq v(t,x).$$
\end{theorem}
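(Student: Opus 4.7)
The plan is to run a by-contradiction argument combining the standard Ishii doubling of variables with the multiplicative scaling trick of Lemma~\ref{Lemma3.3}. Suppose $M := \sup_{[0,T]\times\mathbb{R}^n}(u-v) > 0$; by the terminal inequality $u(T,\cdot)\leq v(T,\cdot)$ and the BUC regularity of $u,v$, this supremum is not attained on $\{s=T\}$. The proof of Lemma~\ref{Lemma3.3} extends verbatim to sub- and super-solutions (only the homogeneity of the operators under scaling by $\mu$ is used), so for any $\mu\in(0,1)$ the function $\mu u$ is a viscosity sub-solution of (HJBI$_\mu$), whose non-local operators involve the strictly smaller costs $\mu c$ and $\mu\chi$. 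By boundedness of $u$, one may fix $\mu$ so close to $1$ that $M_\mu := \sup(\mu u - v) > 0$.

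I would next introduce the doubled-variable functional
$$\Psi_\varepsilon(s,x,t,y) = \mu u(s,x) - v(t,y) - \tfrac{1}{2\varepsilon}\bigl(\|x-y\|^2+(s-t)^2\bigr) - \tfrac{\eta}{T-s},$$
for small $\eta>0$. Its maximum is attained at some $(s_\varepsilon,x_\varepsilon,t_\varepsilon,y_\varepsilon)$, and classical viscosity estimates yield $\varepsilon^{-1}(\|x_\varepsilon-y_\varepsilon\|^2+(s_\varepsilon-t_\varepsilon)^2)\to 0$, $(s_\varepsilon,x_\varepsilon),(t_\varepsilon,y_\varepsilon)\to(\bar s,\bar x)$ with $\bar s<T$, and $(\mu u-v)(\bar s,\bar x)\geq M_\mu$. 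I then invoke the sub-solution inequality for $\mu u$ at $(s_\varepsilon,x_\varepsilon)$ and the super-solution inequality for $v$ at $(t_\varepsilon,y_\varepsilon)$ with the test functions read from $\Psi_\varepsilon$, and branch on which term of the max--min is active for $v$.

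If the upper obstacle is active, that is $v\geq\mathcal{H}_{inf}^\chi v$ at $(t_\varepsilon,y_\varepsilon)$, I pick a near-minimiser $\eta_\delta\in V$ with $v(t_\varepsilon,y_\varepsilon)\geq v\bigl(t_\varepsilon,y_\varepsilon+g_\eta(t_\varepsilon,y_\varepsilon;\eta_\delta)\bigr)+\chi(t_\varepsilon,y_\varepsilon;\eta_\delta)-\delta$ and combine it with the unconditional upper-obstacle bound $\mu u(s_\varepsilon,x_\varepsilon)\leq \mu u\bigl(s_\varepsilon,x_\varepsilon+g_\eta(s_\varepsilon,x_\varepsilon;\eta_\delta)\bigr)+\mu\chi(s_\varepsilon,x_\varepsilon;\eta_\delta)$ coming from $\mu u$ being a sub-solution of (HJBI$_\mu$). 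The maximum property of $\Psi_\varepsilon$ at the two shifted arguments, together with the Lipschitz regularity of $g_\eta$ from $\textbf{H}_g$ and the continuity of $\chi$, lets the subtraction collapse to $0\leq(\mu-1)\chi(\bar s,\bar x;\eta_\delta)+o_{\varepsilon,\delta}(1)$, contradicting $\chi\geq\underline\chi>0$ from $\textbf{H}_{c,\chi}$ once $\varepsilon,\delta$ are small enough. Otherwise both the differential HJB super-solution inequality and $v\geq\mathcal{H}_{sup}^c v$ hold at $(t_\varepsilon,y_\varepsilon)$; if the sub-solution branch additionally activates the differential HJB inequality for $\mu u$, the classical Crandall--Ishii comparison (using Lipschitz $b,f$ from $\textbf{H}_b,\textbf{H}_f$ and the discount $\lambda$) delivers the contradiction in the usual way.

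The genuinely delicate case is when the sub-solution's lower obstacle $\mu u\leq \mathcal{H}_{sup}^{c,\mu}(\mu u)$ and the super-solution's lower obstacle $v\geq\mathcal{H}_{sup}^c v$ are simultaneously active: the multiplicative $\mu$-trick now produces a slack of $(1-\mu)c$ of the wrong sign. I would handle it by iterating on maximiser impulses. Taking a near-maximiser $\xi_1\in U$ of $\mathcal{H}_{sup}^c(\mu u)$ at $(\bar s,\bar x)$ and moving to $(\bar s,\bar x+g_\xi(\bar s,\bar x;\xi_1))$, subtracting the two lower-obstacle inequalities shows that $(\mu u-v)$ drops by at most $(1-\mu)c+O(\varepsilon)$ while $\mu u$ itself rises by at least $\mu\underline c - O(\varepsilon)$. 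Since $u$ is bounded and $\underline c>0$, at most $N_0\sim 2\|u\|_\infty/\underline c$ such consecutive iterates are possible, so at some step the lower-obstacle branch for the sub-solution must fail, placing the argument in one of the two previously treated regimes; a careful bookkeeping of the accumulated errors (ordered so that $\delta\ll\varepsilon\ll(1-\mu)$) then transports the contradiction back to the original supremum. This finite-iteration step, powered precisely by the strict positivity of the impulse costs in $\textbf{H}_{c,\chi}$ and the boundedness of $u,v$, is the main technical obstacle of the proof.
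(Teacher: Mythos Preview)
Your strategy is a genuine alternative to the paper's, and your handling of the upper-obstacle and purely differential branches is fine. The paper, however, never confronts your ``delicate case'' at all. After the $\mu$-scaling it doubles only the spatial variable (with an extra $\beta(\|x\|^2+\|y\|^2)$ penalisation), subtracts the sub- and super-solution inequalities in the min--max form of Proposition~\ref{Proposition3.1}, and then uses the trivial algebraic bound $\max\{\min[A,B];C\}\le\max\{A,C\}$ to \emph{discard} the lower-obstacle difference outright. Passing to the global supremum this gives
\[
\lambda\|(\mu u-v)^+\|_\infty\le\max\Bigl[(1-\mu)\|f\|_\infty,\ \lambda\bigl\|(\mathcal{H}_{inf}^{\chi,\mu}\mu u-\mathcal{H}_{inf}^{\chi}v)^+\bigr\|_\infty\Bigr],
\]
and since $\mathcal{H}_{inf}^{\chi,\mu}\mu u-\mathcal{H}_{inf}^{\chi}v\le (\mu u-v)+(\mu-1)\chi$ with $\inf\chi>0$, the second alternative is strictly smaller than the left-hand side and is therefore excluded; hence $\lambda\|(\mu u-v)^+\|_\infty\le(1-\mu)\|f\|_\infty\to0$ as $\mu\uparrow1$. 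No case analysis, no iteration.

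Your finite-iteration plan for the $\mathcal{H}_{sup}^c$ branch is the standard device for double-obstacle and switching problems and can be made rigorous, but the part you defer to ``careful bookkeeping'' is exactly where the argument is incomplete. After shifting $(s_\varepsilon,x_\varepsilon)$ and $(t_\varepsilon,y_\varepsilon)$ by $g_\xi(\cdot;\xi_1)$, the shifted pair is no longer the maximiser of $\Psi_\varepsilon$, so you have no test functions there and cannot invoke either viscosity inequality to determine which branch is now active. You must either re-run the doubling at the shifted configuration (and control how far the new maximiser drifts, uniformly over up to $N_0\sim 2\|u\|_\infty/\inf c$ iterations) or exploit near-maximality to perturb the test function; in both cases the accumulated $O(\varepsilon)$, $O(\delta)$ and $(1-\mu)\sum_k c(\cdot;\xi_k)$ errors have to be summed and ordered so that the terminal contradiction survives. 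None of this is carried out in your sketch. If you wish to keep this route, that is the step to write out in full; otherwise the paper's $\max\{\min[A,B];C\}\le\max\{A,C\}$ shortcut sidesteps the whole difficulty.
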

\begin{proof}
The proof is inspired from \cite{BBEl10, El17_2}. Let $u$ and $v$ be, respectively, a BUC viscosity sub-solution and super-solution to the HJBI equation. Recalling, for all $0<\mu<1$, Proposition \ref{Proposition3.1} and Lemma \ref{Lemma3.3}, to get that $\mu u$ is a viscosity sub-solution to the following equation:
\begin{equation}\label{equation_3.5}
\left\{
\begin{aligned}
&
\begin{aligned}
\lambda u(s,y)=\min_{i\in\{0,1\}}\biggl\{(1-i)\max_{j\in\{0,1\}}\Bigl[&(1-j)\Bigl(\frac{\partial}{\partial s}u(s,y)+\sup_{\theta\in\mathbb{R}^l}\Bigl\{D_yu(s,y).b(s,y;\theta)+\mu f(s,y;\theta)\Bigr\}\Bigr)\\
&+j\lambda\mathcal{H}_{sup}^{c,\mu} u(s,y)\Bigr]+i\lambda\mathcal{H}_{inf}^{\chi,\mu} u(s,y)\biggr\},\;\text{on}\;[t,T)\times\mathbb{R}^n;
\end{aligned}\\
&u(T,y)=\mu G(y)\;\text{for all}\;y\in\mathbb{R}^n.
\end{aligned}
\right.
\end{equation}
where the operators $\mathcal{H}_{inf}^{\chi,\mu}$ and $\mathcal{H}_{sup}^{c,\mu}$ are defined as in Lemma \ref{Lemma3.3} and the function $u$ is from $[t,T]\times\mathbb{R}^n$ into $\mathbb{R}$. First, we assume that $M=\sup_{(t,x)\in[0,T)\times\mathbb{R}^n}\bigl(u(t,x)-v(t,x)\bigr)>0$, if it is not the case, i.e., $M\leq 0$, the proof is then finished. If $\|u\|_{\infty}=0$ we have $$M_\mu=\sup_{(t,x)\in[0,T)\times\mathbb{R}^n}\bigl(\mu u(t,x)-v(t,x)\bigr)>0,$$
otherwise, by letting $1-M/(2\|u\|_{\infty})\leq\mu<1$ we also get that $M_\mu>0$. Next, we divide the proof into the following three steps:\\
\par \textbf{Step 1.} Let $\varepsilon>0$, $\beta>0$ and consider for all $t\in[0,T)$ and $x,y\in\mathbb{R}^n$ the following test function:
$$\Gamma_{\mu,\varepsilon,\beta}(t,x,y)=\mu u(t,x)-v(t,y)-\frac{\|x-y\|^2}{\varepsilon^2}-\beta\bigl(\|x\|^2+\|y\|^2\bigr).$$
Since $\Gamma_{\mu,\varepsilon,\beta}$ is a continuous function going to infinity when $x$ or $y$ does, then it admits a maximum point $(t_m,x_m,y_m)$ satisfying $M_{\Gamma_{\mu,\varepsilon,\beta}}=\Gamma_{\mu,\varepsilon,\beta}(t_m,x_m,y_m).$
We have for all $t\in[0,T)$ and $x,y\in\mathbb{R}^n$,
\begin{equation}\label{equation_3.6}
\mu u(t_m,x_m)-v(t_m,y_m)-\frac{\|x_m-y_m\|^2}{\varepsilon^2}-\beta\bigl(\|x_m\|^2+\|y_m\|^2\bigr)\geq \mu u(t,x)-v(t,y)-\frac{\|x-y\|^2}{\varepsilon^2}-\beta\bigl(\|x\|^2+\|y\|^2\bigr).
\end{equation}
\begin{itemize}
\item Firstly, using inequality (\ref{equation_3.6}) with $(t,y)=(t_m,y_m)$, we get that $(t_m,x_m)$ is a maximal point of $\mu u(t,x)-\phi_u(t,x)$, where $$\phi_u(t,x)=\frac{\|x-y_m\|^2}{\varepsilon^2}+\beta\|x\|^2,$$
then, since $\mu u$ is viscosity sub-solution of (\ref{equation_3.5}), we get
\begin{equation}\label{equation_3.7}
\begin{aligned}
\lambda\mu u(t_m,x_m)\leq\min_{i\in\{0,1\}}\biggl\{(1-i)&\max_{j\in\{0,1\}}\biggl[(1-j)\sup_{\theta\in\mathbb{R}^l}\Bigl\{\Bigl\langle\frac{2\|x_m-y_m\|}{\varepsilon^2}+2\beta x_m,b(t_m,x_m;\theta)\Bigr\rangle\\
&+\mu f(t_m,x_m;\theta)\Bigr\}+j\lambda\mathcal{H}_{sup}^{c,\mu}\mu u(t_m,x_m)\biggr]+i\lambda\mathcal{H}_{inf}^{\chi,\mu}\mu u(t_m,x_m)\biggr\}.
\end{aligned}
\end{equation}
\item Secondly, using inequality (\ref{equation_3.6}) with $(t,x)=(t_m,x_m)$, we get that $(t_m,y_m)$ is a minimal point of $v(t,y)-\phi_v(t,y)$, where $$\phi_v(t,y)=-\frac{\|x_m-y\|^2}{\varepsilon^2}-\beta\|y\|^2,$$
then, since $v$ is viscosity super-solution of the HJBI equation, by applying Proposition \ref{Proposition3.1} we get
\begin{equation}\label{equation_3.8}
\begin{aligned}
\lambda v(t_m,y_m)\geq\min_{i\in\{0,1\}}\biggl\{(1-i)\max_{j\in\{0,1\}}\biggl[&(1-j)\sup_{\theta\in\mathbb{R}^l}\Bigl\{\Bigl\langle \frac{2\|x_m-y_m\|}{\varepsilon^2}-2\beta y_m,b(t_m,y_m;\theta)\Bigr\rangle\\
&+f(t_m,y_m;\theta)\Bigr\}+j\lambda\mathcal{H}_{sup}^{c} v(t_m,y_m)\biggr]+i\lambda\mathcal{H}_{inf}^{\chi} v(t_m,y_m)\biggr\}.
\end{aligned}
\end{equation}
\end{itemize}
Hence, using above inequalities (\ref{equation_3.7}) and (\ref{equation_3.8}), we get
\begin{equation*}
\begin{aligned}
\lambda\bigl(\mu u(t_m,x_m)-&v(t_m,y_m)\bigr)\leq \min_{i\in\{0,1\}}\biggl\{(1-i)\max_{j\in\{0,1\}}\biggl[(1-j)\sup_{\theta\in\mathbb{R}^l}\Bigl\{\Bigl\langle \frac{2\|x_m-y_m\|}{\varepsilon^2}+2\beta x_m,\\
&b(t_m,x_m;\theta)\Bigr\rangle+\mu f(t_m,x_m;\theta)\Bigr\}+j\lambda\mathcal{H}_{sup}^{c,\mu}\mu u(t_m,x_m)\biggr]+i\lambda\mathcal{H}_{inf}^{\chi,\mu}\mu u(t_m,x_m)\biggr\}\\
&+\max_{i\in\{0,1\}}\biggl\{(1-i)\min_{j\in\{0,1\}}\biggl[(1-j)\inf_{\theta\in\mathbb{R}^l}\Bigl\{-\Bigl\langle \frac{2\|x_m-y_m\|}{\varepsilon^2}-2\beta y_m,b(t_m,y_m;\theta)\Bigr\rangle\\
&-f(t_m,y_m;\theta)\Bigr\}-j\lambda\mathcal{H}_{sup}^{c} v(t_m,y_m)\biggr]-i\lambda\mathcal{H}_{inf}^{\chi} v(t_m,y_m)\biggr\},
\end{aligned}
\end{equation*}
then we get
\begin{equation*}
\begin{aligned}
\lambda\bigl(\mu u(t_m,x_m)-&v(t_m,y_m)\bigr)\leq \max_{i\in\{0,1\}}\biggl\{(1-i)\min_{j\in\{0,1\}}\biggl[(1-j)\inf_{\theta\in\mathbb{R}^l}\Bigl\{\Bigl\langle \frac{2\|x_m-y_m\|}{\varepsilon^2},\\
&b(t_m,x_m;\theta)-b(t_m,y_m;\theta)\Bigr\rangle+2\beta\bigl\langle x_m,b(t_m,x_m;\theta)\bigr\rangle+2\beta\bigl\langle y_m,b(t_m,y_m;\theta)\bigr\rangle\\
&+\mu f(t_m,x_m;\theta)-f(t_m,y_m;\theta)\Bigr\}+j\lambda\bigl(\mathcal{H}_{sup}^{c,\mu}\mu u(t_m,x_m)-\mathcal{H}_{sup}^{c} v(t_m,y_m)\bigr)\biggr]\\
&+i\lambda\bigl(\mathcal{H}_{inf}^{\chi,\mu}\mu u(t_m,x_m)-\mathcal{H}_{inf}^{\chi} v(t_m,y_m)\bigr)\biggr\}.
\end{aligned}
\end{equation*}
Thus from standing assumptions
\begin{equation}\label{equation_3.9}
\begin{aligned}
\lambda\bigl(\mu u(t_m,x_m)-v(t_m,y_m)\bigr)\leq& \max\biggl\{\min\biggl[2C_b\frac{\|x_m-y_m\|^2}{\varepsilon^2}+2\beta\|b\|_\infty\Bigl(\|x_m\|+\|y_m\|\Bigr)+(1-\mu)\|f\|_\infty,\\
&\lambda\Bigl(\mathcal{H}_{sup}^{c,\mu} \mu u(t_m,x_m)-\mathcal{H}_{sup}^{c,\mu} \mu u(t_m,y_m)+\bigl\|\bigl(\mathcal{H}_{sup}^{c,\mu}\mu u-\mathcal{H}_{sup}^{c}v\bigr)^+\bigr\|_\infty\Bigr)\biggr];\\
&\lambda\Bigl(\mathcal{H}_{inf}^{\chi,\mu}\mu u(t_m,x_m)-\mathcal{H}_{inf}^{\chi,\mu}\mu u(t_m,y_m)+\bigl\|\bigl(\mathcal{H}_{inf}^{\chi,\mu}\mu u-\mathcal{H}_{inf}^{\chi}v\bigr)^+\bigr\|_\infty\Bigr)\biggr\}.
\end{aligned}
\end{equation}
The following two steps investigate the right-hand side of inequality (\ref{equation_3.9}):\\
\par\textbf{Step 2.} We prove hereafter that
\begin{equation}\label{equation_3.10}
\forall\eta>0,\;\exists\varepsilon_0>0,\;\beta_0>0,\;\forall\varepsilon\leq\varepsilon_0,\;\beta\leq\beta_0:\;\frac{\|x_m-y_m\|^2}{\varepsilon^2}+\beta\bigl(\|x_m\|^2+\|y_m\|^2\bigr)\leq\eta.
\end{equation}
We use inequality (\ref{equation_3.6}) for $x=y$ in the right-hand side, then we get $M_{\Gamma_{\mu,\varepsilon,\beta}}\geq \mu u(t,x)-v(t,x)-2\beta\|x\|^2$.
Further, we let $\sup_{(t,x)\in[0,T)\times\mathbb{R}^n}\bigl(\mu u(t,x)-v(t,x)\bigr)$ be reached in a point $(t^*,x^*)$, within $\delta>0$ arbitrary small, thus $\mu u(t^*,x^*)-v(t^*,x^*)\geq M_\mu-\delta$. Now we choose $\delta$ and $\beta$ such that $M_\mu-\delta-2\beta\|x^*\|^2>0$, which is possible since $(t^*,x^*)$ depends only on $\delta$. Thus we deduce
\begin{equation}\label{equation_3.11}
\begin{aligned}
M_{\Gamma_{\mu,\varepsilon,\beta}}&\geq \mu u(t^*,x^*)-v(t^*,x^*)-2\beta\|x^*\|^2\\
&\geq M_\mu-\delta-2\beta\|x^*\|^2\\
&>0.
\end{aligned}
\end{equation}
By letting $r^2=\mu\|u\|_\infty+\|v\|_\infty$, we get
$$\|u\|_\infty\leq M_{\Gamma_{\mu,\varepsilon,\beta}}\leq r^2-\frac{\|x_m-y_m\|^2}{\varepsilon^2}-\beta\bigl(\|x_m\|^2+\|y_m\|^2\bigr),$$
then
\begin{equation}\label{equation_3.12}
\|x_m-y_m\|\leq r\varepsilon.
\end{equation}
Therefore, we introduce the following increasing function:
$$m(w)=\sup_{t\in[0,T),\|x-y\|\leq w}\bigl|v(t,x)-v(t,y)\bigr|,$$
then, combining with (\ref{equation_3.12}), we obtain
$$\mu u(t_m,x_m)-v(t_m,y_m)=\mu u(t_m,x_m)-v(t_m,x_m)+v(t_m,x_m)-v(t_m,y_m)\leq M_\mu+m(r\varepsilon).$$
Thus, from (\ref{equation_3.11}) using the definition of $M_{\Gamma_{\mu,\varepsilon,\beta}}$, we get
$$M_\mu-\delta-2\beta\|x^*\|^2\leq M_{\Gamma_{\mu,\varepsilon,\beta}}\leq M_\mu+m(r\varepsilon)-\frac{\|x_m-y_m\|^2}{\varepsilon^2}-\beta\bigl(\|x_m\|^2+\|y_m\|^2\bigr),$$
then $$\frac{\|x_m-y_m\|^2}{\varepsilon^2}+\beta\bigl(\|x_m\|^2+\|y_m\|^2\bigr)\leq\delta+2\beta\|x^*\|^2+m(r\varepsilon).$$
Now, we choose $\eta<4M_\mu/3$ and we take $\delta=\eta/4$ and $\beta_0=1$ if $\|x^*\|=0$, $\beta_0=\varepsilon/(4\|x^*\|^2)$ if $\|x^*\|\neq 0$, to get the desired inequality (\ref{equation_3.10}). The proof is then complete. We also get for any $\beta\leq\beta_0$,
\begin{equation}\label{equation_3.13}
0<M_\mu-\frac{3\eta}{4}\leq M_\mu-\delta-2\beta\|x^*\|^2\leq M_{\Gamma_{\mu,\varepsilon,\beta}}\leq \mu u(t_m,x_m)-v(t_m,y_m).
\end{equation}
\par\textbf{Step 3.} To complete the proof it remains to show contradiction. By (\ref{equation_3.10}), for $\varepsilon\leq\varepsilon_0$ and $\beta\leq\beta_0$ we have
$$2C_b\|x_m-y_m\|^2/\varepsilon^2\leq 2C_b\eta,\;\beta\|x_m\|\leq\sqrt{\beta\eta},\;\text{and}\;\beta\|y_m\|\leq\sqrt{\beta\eta}.$$
Then, for all $\beta\leq\beta_1=\min\bigl\{\beta_0,\eta/\|b\|^2_\infty\bigr\}$, we get $2\beta\|b\|_\infty\bigl(\|x_m\|+\|y_m\|\bigr)\leq 4\eta$. Moreover, for all $\varepsilon\leq\varepsilon_1=\min\bigl\{\varepsilon_0,\sqrt{\eta}/C_f\bigr\}$, we have $C_f\bigl(\|x_m-y_m\|\bigr)\leq\eta$. By Proposition \ref{Proposition2.5}, the two functions $x\rightarrow\mathcal{H}_{inf}^{\chi,\mu}\mu u(t,x)$ and $x\rightarrow\mathcal{H}_{sup}^{c,\mu}\mu u(t,x)$ are UC for any $t\in[0,T)$, then, tacking into account (\ref{equation_3.12}), we can find $\varepsilon_2\leq\varepsilon_1$ such that for $\varepsilon\leq\varepsilon_2$, $$\mathcal{H}_{inf}^{\chi,\mu} \mu u(t_m,x_m)-\mathcal{H}_{inf}^{\chi,\mu} \mu u(t_m,y_m)\leq\eta,\;\text{and}\;\mathcal{H}_{sup}^{c,\mu} \mu u(t_m,x_m)-\mathcal{H}_{sup}^{c,\mu} \mu u(t_m,y_m)\leq\eta.$$
Thus, from (\ref{equation_3.9}) for all $\varepsilon\leq\varepsilon_2$ and $\beta\leq\beta_1$, we get
\begin{equation*}
\begin{aligned}
\lambda\bigl(\mu u(t_m,x_m)-v(t_m,y_m)\bigr)\leq\max\biggl\{\min\Bigl[& (1-\mu)\|f\|_\infty,\lambda\bigl\|\bigl(\mathcal{H}_{sup}^{c,\mu}\mu u-\mathcal{H}_{sup}^{c}v\bigr)^+\bigr\|_\infty\Bigr];\\
&\lambda\bigl\|\bigl(\mathcal{H}_{inf}^{\chi,\mu}\mu u-\mathcal{H}_{inf}^{\chi}v\bigr)^+\bigr\|_\infty\biggr\}+(5+2C_b+\lambda)\eta,
\end{aligned}
\end{equation*}
from (\ref{equation_3.13}) and the fact that $\eta$ is arbitrary we deduce
\begin{equation*}
\begin{aligned}
\lambda\bigl\|(\mu u-v)^+\bigr\|_\infty\leq\max\biggl\{\min\Bigl[& (1-\mu)\|f\|_\infty,\lambda\bigl\|\bigl(\mathcal{H}_{sup}^{c,\mu}\mu u-\mathcal{H}_{sup}^{c}v\bigr)^+\bigr\|_\infty\Bigr];\\
&\lambda\bigl\|\bigl(\mathcal{H}_{inf}^{\chi,\mu}\mu u-\mathcal{H}_{inf}^{\chi}v\bigr)^+\bigr\|_\infty\biggr\},
\end{aligned}
\end{equation*}
thus
\begin{equation}\label{equation_3.14}
\lambda\bigl\|(\mu u-v)^+\bigr\|_\infty\leq\max\Bigl[ (1-\mu)\|f\|_\infty,\lambda\bigl\|\bigl(\mathcal{H}_{inf}^{\chi,\mu}\mu u-\mathcal{H}_{inf}^{\chi}v\bigr)^+\bigr\|_\infty\Bigr].
\end{equation}
Since for all $(s,y)\in[t,T)\times\mathbb{R}^n$,
\begin{equation}\label{equation_3.15}
\mathcal{H}_{inf}^{\chi,\mu}\mu u(s,y)-\mathcal{H}_{inf}^{\chi}v(s,y)\leq\sup_{\eta\in V}\Bigl(\mu u\bigl(s,y+g_\eta(s,y;\eta)\bigr)-v\bigl(s,y+g_\eta(s,y;\eta)\bigr)\Bigr)+\sup_{\eta\in V}\bigl((\mu-1)\chi(s,y;\eta)\bigr),
\end{equation}
and, from standing assumptions for all $(s,y)\in[t,T)\times\mathbb{R}^n$ and $\eta\in V\backslash\{0\}$, we have $\chi(s,y;\eta)>0$. We then deduce, from (\ref{equation_3.15}) for $0<\mu<1$, that
\begin{equation}\label{equation_3.16}
\Bigl\|\bigl(\mathcal{H}_{inf}^{\chi,\mu}\mu u-\mathcal{H}_{inf}^{\chi}v\bigr)^+\Bigr\|_\infty<\bigl\|(\mu u-v)^+\bigr\|_\infty.
\end{equation}
Therefore, combining the two inequalities (\ref{equation_3.14}) and (\ref{equation_3.16}) yield that
$$\lambda\|(\mu u-v)^+\|_\infty\leq (1-\mu)\|f\|_\infty.$$
Hence, by letting $\mu\rightarrow 1$ and using the fact that $f$ is bounded, we get $\|(u-v)^+\|_\infty\leq 0$, which leads us to a contradiction and gives the desired comparison result, for any $(t,x)\in[0,T]\times\mathbb{R}^n$, $u(t,x)\leq v(t,x)$.
\end{proof}
\begin{theorem}
Assume $\textbf{H}_b$, $\textbf{H}_g$, $\textbf{H}_f$, $\textbf{H}_{c,\chi}$ and $\textbf{H}_{G}$. The Hamilton-Jacobi-Bellman-Isaacs equation (HJBI) has a unique bounded uniformly continuous viscosity solution.
\end{theorem}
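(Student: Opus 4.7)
The plan is to combine the existence of a viscosity solution (Theorem \ref{theorem3.1}) with the regularity estimates of Section \ref{Sect.2} to produce a BUC viscosity solution, and then invoke the Comparison Theorem \ref{CThe} to rule out any second BUC solution.

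For existence I would argue as follows. By Theorem \ref{theorem3.1}, the lower value $V^-$ is a viscosity solution of the HJBI equation. Proposition \ref{Proposition2.2} yields boundedness of $V^-$ on $[0,T]\times\mathbb{R}^n$, Proposition \ref{Proposition2.3} gives continuity in the time variable, and Proposition \ref{Proposition2.4} gives Lipschitz continuity in the state variable uniformly in time. Combining these three regularity facts, $V^-$ is bounded and uniformly continuous on $[0,T]\times\mathbb{R}^n$, hence a BUC viscosity solution of (HJBI) satisfying the terminal condition $V^-(T,\cdot)=G$ from assumption $\textbf{H}_G$.

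For uniqueness, let $u$ and $v$ be two BUC viscosity solutions of the HJBI equation. In particular both $u$ and $v$ are simultaneously BUC viscosity sub-solutions and super-solutions, and they coincide at $T$ with the terminal data $G$. Applying Theorem \ref{CThe} with $u$ playing the role of the sub-solution and $v$ the role of the super-solution (the hypothesis $u(T,\cdot)\leq v(T,\cdot)$ holds with equality) gives $u\leq v$ on $[0,T]\times\mathbb{R}^n$. Exchanging the roles of $u$ and $v$ in the same comparison principle yields $v\leq u$. Hence $u\equiv v$, which proves uniqueness.

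No real obstacle remains at this stage: the technically demanding work, namely the doubling-of-variables argument with the scaling $\mu<1$ needed to handle the lower obstacle $\mathcal{H}_{inf}^\chi$ together with the running cost, has already been carried out in the proof of Theorem \ref{CThe}. The present statement is essentially a packaging corollary, and as a by-product it forces $V^-\equiv V^+$ (both being BUC viscosity solutions with the same terminal data), which justifies the common notation $V$ used throughout and confirms that the zero-sum DG has a value.
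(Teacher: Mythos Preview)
Your proposal is correct and follows essentially the same route as the paper: the uniqueness argument is identical (apply Theorem \ref{CThe} twice with the roles of $u$ and $v$ exchanged), and your existence paragraph merely makes explicit what the paper leaves to the preceding results (Theorem \ref{theorem3.1} together with Propositions \ref{Proposition2.2}--\ref{Proposition2.4}). Your closing remark that $V^-\equiv V^+$ is exactly the content of the paper's Corollary \ref{corollary3.1}.
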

\begin{proof}
Assume that $u$ and $v$ are two viscosity solutions to the HJBI equation. We first use $u$ as a BUC viscosity sub-solution and $v$ as a BUC viscosity super-solution and we recall the comparison principle. Then we change the role of $u$ and $v$ to get $u(t,x)=v(t,x)$ for all $(t,x)\in[0,T]\times\mathbb{R}^n$.
\end{proof}
Let us now give the Corollary \ref{corollary3.1} to summarize the principal results of this section, thus it gives the first contribution of the paper as mentioned in Remark \ref{Remark2.3}.
\begin{corollary}\label{corollary3.1}
Assuming $\textbf{H}_b$, $\textbf{H}_g$, $\textbf{H}_f$, $\textbf{H}_{c,\chi}$ and $\textbf{H}_{G}$, the lower value and the upper value coincide, and the value function of the deterministic finite-time horizon, two-player, zero-sum DG control problem is the unique bounded uniformly continuous viscosity solution to the Hamilton-Jacobi-Bellman-Isaacs equation (HJBI). \qed
\end{corollary}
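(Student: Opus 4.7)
The plan is to assemble the Corollary as a direct synthesis of the three main results already established in Section~\ref{Sect.3}, namely: Theorem~\ref{theorem3.1} (existence), Theorem~\ref{CThe} (comparison), and the uniqueness theorem immediately preceding the Corollary. No new PDE analysis is needed; the work is bookkeeping of regularity plus two invocations of the comparison principle.

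First, I would verify that both $V^-$ and $V^+$ lie in the class to which the comparison principle applies, i.e., that they are bounded uniformly continuous on $[0,T]\times\mathbb{R}^n$. Boundedness is given by Proposition~\ref{Proposition2.2}, continuity in time by Proposition~\ref{Proposition2.3}, and Lipschitz continuity in the state variable by Proposition~\ref{Proposition2.4}. Combining these yields BUC on the product space, in particular uniform continuity is uniform in both variables on bounded sets and the Lipschitz modulus is the same constant $M$ given by Proposition~\ref{Proposition2.4}. Also, at the terminal time, both value functions coincide with $G$ by the very definition of $J$, so the terminal condition $V^\pm(T,\cdot)=G(\cdot)$ from the HJBI system is satisfied.

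Next, by Theorem~\ref{theorem3.1}, both $V^-$ and $V^+$ are viscosity solutions (hence both viscosity sub-solution and super-solution) to the HJBI equation with the same terminal datum $G$. I would then apply Theorem~\ref{CThe} twice: using $u:=V^-$ as a BUC sub-solution and $v:=V^+$ as a BUC super-solution gives $V^-\leq V^+$ on $[0,T]\times\mathbb{R}^n$; swapping the roles (using $u:=V^+$ as sub-solution and $v:=V^-$ as super-solution) gives the reverse inequality. Hence $V^-\equiv V^+$, so the game admits a value $V:=V^-=V^+$. Since $V$ is itself a BUC viscosity solution of HJBI, the uniqueness theorem stated just above the Corollary (which itself is a symmetric double application of the comparison principle) gives that $V$ is the unique BUC viscosity solution.

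The only subtle point worth flagging is the requirement that both $V^-$ and $V^+$ be uniformly continuous (not merely continuous in each variable separately), as this is what is demanded by the hypothesis of Theorem~\ref{CThe}. This is the step I would write most carefully: Propositions~\ref{Proposition2.3} and~\ref{Proposition2.4} deliver time-continuity and global Lipschitz continuity in state with the same constant $M$ uniformly in $t$, so a standard $\varepsilon/2$ argument decomposing $|V^\pm(t,x)-V^\pm(t',x')|\leq |V^\pm(t,x)-V^\pm(t',x)|+M\|x-x'\|$ yields joint uniform continuity. Once this is in hand, the remainder of the proof is a one-line invocation of the comparison and uniqueness theorems.
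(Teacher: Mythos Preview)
Your proposal is correct and follows the same approach the paper intends: the corollary is stated there without proof (just a \qed), as a direct summary of Theorem~\ref{theorem3.1}, Theorem~\ref{CThe}, and the uniqueness theorem, combined with the regularity Propositions~\ref{Proposition2.2}--\ref{Proposition2.4}. Your extra care in checking that $V^{\pm}$ are actually BUC (so that the comparison principle applies) is more explicit than anything the paper writes, but it is exactly the bookkeeping the corollary is meant to encapsulate.
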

Next, we focus on the second contribution of the paper as mentioned in Remark \ref{Remark2.3}. Using the fact that the value function is the unique VS to the HJBI equation and studying the approximate equation (HJBI$_{h}$), a family of value functions converging to the value function of each player is introduced. The limit for this family, when the time discretization step $h$ goes to zero, is characterized either as the unique VS to the HJBI equation, or as the limit, when $h$ goes to zero, of the unique solution of the approximate equation (HJBI$_{h}$).
\section{Discrete Approximation of the HJBI Equation}\label{Sect.4}
This section discusses an \textit{approximation scheme} to the solution of the HJBI equation. In the other words, it gives an approximation scheme to the value function of the zero-sum DG control problem studied. We mainly prove that the approximate equation (HJBI$_{h}$) has, for any time discretization step $0<h<1/\lambda$, a unique bounded continuous solution $v_h$ which converges locally uniformly towards the value function when $h$ goes to zero. Such a result will be useful to characterize, by means of a \textit{verification theorem}, a NE strategy for both players. This will be the subject of Section \ref{Sect.5}. These results leads then to some numerical aspects for computing the value function and the related optimal controls of NE and the optimal evolution of the state.
\subsection{Uniqueness of the Approximate Value Function}
We begin by giving the useful Proposition \ref{proposition4.1} below, then we prove that the approximate equation (HJBI$_{h}$) has a unique bounded continuous solution for any time discretization step $0<h<1/\lambda$.
\begin{proposition}\label{proposition4.1}
Solving the approximate Hamilton-Jacobi-Bellman-Isaacs equation (HJBI$_{h}$) is equivalent to solve the following equation:
\begin{equation*}
\left\{
\begin{aligned}
&
\begin{aligned}
v_h(s,y)=\min_{i\in\{0,1\}}\biggl\{&(1-i)\max_{j\in\{0,1\}}\Bigl[(1-j)\sup_{\theta\in\mathbb{R}^l}\Bigl\{(1-\lambda h)v_h\bigl(s+h,y+hb(s,y;\theta)\bigr)+hf(s,y;\theta)\Bigr\}\\
&+j\Phi(h)\mathcal{H}_{sup}^c v_h(s,y)\Bigr]+i\Phi(h)\mathcal{H}_{inf}^\chi v_h(s,y)\biggr\},\;\text{on}\;[t,T)\times\mathbb{R}^n;
\end{aligned}\\
&v_h(T,y)=G(y)\;\text{for all}\;y\in\mathbb{R}^n.
\end{aligned}
\right.
\end{equation*}
\end{proposition}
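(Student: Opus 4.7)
The strategy is to mimic closely the reformulation carried out in Proposition~\ref{Proposition3.1}. The key preliminary observation is that in the approximate Hamiltonian $H_h$ of Definition~\ref{def2.5}, the term $v_h(s,y)$ does not depend on the control $\theta$, so it can be pulled outside the infimum to give
\begin{equation*}
H_h\bigl(s,y,v_h(s,y)\bigr) = v_h(s,y) - \sup_{\theta\in\mathbb{R}^l}\Bigl\{(1-\lambda h)v_h\bigl(s+h,y+hb(s,y;\theta)\bigr) + hf(s,y;\theta)\Bigr\}.
\end{equation*}
With this rewriting, the equation (HJBI$_h$) takes the form $\max\{\min[A,B];C\}=0$ where $A$, $B$ and $C$ are each of the shape $v_h(s,y) - (\cdots)$.

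Next, I would invoke the elementary identity~(\ref{equivequ}) already established in the proof of Proposition~\ref{Proposition3.1}, specialized to $a=a'=b=b'=1$, to convert the max-min-max structure into
\begin{equation*}
\max_{i\in\{0,1\}}\Bigl\{(1-i)\min_{j\in\{0,1\}}\bigl[(1-j)A + jB\bigr] + iC\Bigr\} = 0.
\end{equation*}
After substituting the three expressions for $A$, $B$, $C$, the remainder is pure algebra with the binary indices $i,j\in\{0,1\}$: one collects the $v_h(s,y)$ terms using $(1-j)+j = 1$ and $(1-i)+i = 1$, turns $(1-i)\min_j[v_h(s,y) - X_j]$ into $(1-i)v_h(s,y) - (1-i)\max_j X_j$, and finally rewrites $\max_i\{v_h(s,y) - Y_i\} = 0$ as $v_h(s,y) = \min_i Y_i$. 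Matching term by term with the $X_j$ and $Y_i$ produced by this substitution yields exactly the formula displayed in the statement.

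There is essentially no analytic obstacle here: the argument is a finite combinatorial rearrangement, pointwise in $(s,y)$, and makes no use of regularity of $v_h$ or of the data beyond what is needed for the operators $\mathcal{H}_{sup}^c$ and $\mathcal{H}_{inf}^\chi$ to be well defined. The only care required is the sign bookkeeping when passing from $\min$ to $\max$ (and from the outer $\max$ to the outer $\min$); this is the same manipulation performed in Proposition~\ref{Proposition3.1}, only with the differential operator $-\partial_s + \lambda v + H$ replaced by the finite-difference Hamiltonian $H_h$ and with $\mathcal{H}_{sup}^c$, $\mathcal{H}_{inf}^\chi$ scaled by $\Phi(h)$. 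The terminal condition $v_h(T,y) = G(y)$ is identical in both formulations, so no additional argument is required there, and both implications ($\Rightarrow$ and $\Leftarrow$) follow simultaneously since every step above is an equivalence.
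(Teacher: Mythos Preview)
Your proposal is correct and follows exactly the approach the paper intends: the paper's own proof consists of the single line ``Similar to the proof of Proposition~\ref{Proposition3.1},'' and your write-up spells out precisely that similarity, including the appropriate specialization $a=a'=b=b'=1$ of identity~(\ref{equivequ}) (in place of $b=b'=\lambda$ used for the continuous HJBI equation) and the subsequent binary-index algebra.
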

\begin{proof}
Similar to the proof of Proposition \ref{Proposition3.1}.
\end{proof}
Now we give the proof of the following Theorem \ref{theorem4.1}:
\begin{theorem}\label{theorem4.1}
For any time discretization step $0<h<1/\lambda$, there exists a unique bounded continuous function $v_h$ solution to the approximate Hamilton-Jacobi-Bellman-Isaacs equation (HJBI$_{h}$).
\end{theorem}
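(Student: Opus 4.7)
The plan is to recast the approximate equation (HJBI$_h$) via its equivalent form in Proposition \ref{proposition4.1} as a fixed-point problem $v_h = \mathcal{T}_h v_h$ on the Banach space $BC([t,T]\times\mathbb{R}^n)$ of bounded continuous functions equipped with the uniform norm $\|\cdot\|_\infty$, and then to invoke Banach's contraction principle. The operator $\mathcal{T}_h$ is defined for $w\in BC([t,T]\times\mathbb{R}^n)$ by reading off the right-hand side of the equation in Proposition \ref{proposition4.1}, with the terminal condition enforced by setting $\mathcal{T}_h w(T,y):=G(y)$ and the convention $w(s+h,\cdot):=G(\cdot)$ whenever $s+h>T$, so that the Hamiltonian term is meaningful on the whole domain $[t,T]\times\mathbb{R}^n$.

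First I would check that $\mathcal{T}_h$ stabilizes $BC([t,T]\times\mathbb{R}^n)$. Boundedness of $\mathcal{T}_h w$ follows from $\mathbf{H}_b$, $\mathbf{H}_f$, $\mathbf{H}_G$ and the positivity of $c,\chi$ in $\mathbf{H}_{c,\chi}$: the inner $\sup_{\theta\in\mathbb{R}^l}$ is finite because its integrand is dominated by $\|w\|_\infty+h\|f\|_\infty$, one has $\mathcal{H}_{sup}^c w\leq\|w\|_\infty$ by positivity of $c$, and $\mathcal{H}_{inf}^\chi w$ is controlled using the continuity of the data together with $\mathbf{H}_G$. Continuity of $\mathcal{T}_h w$ in $(s,y)$ rests on the continuity assumptions on $b,f,g_\xi,g_\eta,c,\chi,G$, the preservation of continuity under finite $\min$ and $\max$, Proposition \ref{Proposition2.5} for the non-local impulse operators, and an $\varepsilon$-maximizer argument for the $\sup$ over $\theta\in\mathbb{R}^l$.

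Second, I would establish that $\mathcal{T}_h$ is a strict contraction with constant $\gamma := \max\{1-\lambda h,\,\Phi(h)\}$, which is strictly less than $1$ provided $0<h<1/\lambda$ (and $0<h<h_0$, so that $\Phi(h)<1$). The key pointwise Lipschitz bounds are
\[
\bigl|(1-\lambda h)\bigl(w_1-w_2\bigr)(s+h,y+hb(s,y;\theta))\bigr|\leq (1-\lambda h)\|w_1-w_2\|_\infty,
\]
\[
\bigl|\Phi(h)\bigl(\mathcal{H}_{sup}^c w_1 - \mathcal{H}_{sup}^c w_2\bigr)(s,y)\bigr|\leq\Phi(h)\|w_1-w_2\|_\infty,
\]
together with the analogue for $\mathcal{H}_{inf}^\chi$, and the $1$-Lipschitz character (w.r.t. the sup norm) of the operations $\inf_\theta$, $\sup_\xi$, $\inf_\eta$, $\min$ and $\max$. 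Assembling these pointwise yields $\|\mathcal{T}_h w_1-\mathcal{T}_h w_2\|_\infty\leq\gamma\|w_1-w_2\|_\infty$, and Banach's fixed-point theorem then delivers a unique $v_h\in BC([t,T]\times\mathbb{R}^n)$ solving (HJBI$_h$).

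I expect the main obstacle to be the continuity check for $(s,y)\mapsto\sup_{\theta\in\mathbb{R}^l}\{(1-\lambda h)w(s+h,y+hb(s,y;\theta))+hf(s,y;\theta)\}$ over the non-compact set $\mathbb{R}^l$. The uniform bound $\|b\|_\infty\leq M$ from $\mathbf{H}_b$ confines the $y$-perturbation to a bounded region independently of $\theta$, and together with the boundedness of $f$ and a standard $\varepsilon$-maximizer/uniform continuity argument (in the spirit of the reasoning behind Proposition \ref{Proposition2.5}) this yields the needed continuity. All other steps in the plan then reduce to routine estimates.
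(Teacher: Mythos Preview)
Your proposal is correct and follows essentially the same approach as the paper: both recast (HJBI$_h$) via Proposition \ref{proposition4.1} as a fixed-point equation on $BC([t,T]\times\mathbb{R}^n)$ and verify that the resulting operator is a contraction with constant $\max\{1-\lambda h,\Phi(h)\}<1$, then invoke Banach's theorem. Your treatment is in fact more careful than the paper's on the stability check (the paper simply asserts that $F$ maps $BC$ into $BC$) and on the continuity of the $\sup_\theta$ term, so the additional detail you supply only strengthens the argument.
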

\begin{proof}
We first rewrite the approximate equation (HJBI$_{h}$) as a fixed-point problem. Let $(s,y)\in[t,T]\times\mathbb{R}^n$ and $v_h(T,y)=G(y)$, from Proposition \ref{proposition4.1} we get that the approximate equation (HJBI$_{h}$) is equivalent to $Fv_h(s,y)=v_h(s,y)$, where $F$ is a function from the space of bounded continuous (BC) functions on $[t,T]\times\mathbb{R}^n$ into the same space defined as follows:
\begin{equation}\label{F}
\left\{
\begin{aligned}
&
\begin{aligned}
Fv(s,y)=\min_{i\in\{0,1\}}\biggl\{&(1-i)\max_{j\in\{0,1\}}\Bigl[(1-j)\sup_{\theta\in\mathbb{R}^l}\Bigl\{(1-\lambda h)v\bigl(s+h,y+hb(s,y;\theta)\bigr)+hf(s,y;\theta)\Bigr\}\\
&+j\Phi(h)\mathcal{H}_{sup}^c v(s,y)\Bigr]+i\Phi(h)\mathcal{H}_{inf}^\chi v(s,y)\biggr\},\;\text{on}\;[t,T)\times\mathbb{R}^n;
\end{aligned}\\
&Fv(T,y)=G(y)\;\text{for all}\;y\in\mathbb{R}^n.
\end{aligned}
\right.
\end{equation}
Next, we let $v_1$ and $v_2$ be two functions in $BC\bigl([t,T]\times\mathbb{R}^n\bigr)$, then for any $(s,y)\in[t,T)\times\mathbb{R}^n$ we have that
\begin{equation*}
\begin{aligned}
Fv_1(s,y)-&Fv_2(s,y)\leq\max_{i\in\{0,1\}}\biggl\{(1-i)\min_{j\in\{0,1\}}\Bigl[(1-j)\inf_{\theta\in\mathbb{R}^l}\Bigl\{(1-\lambda h)\Bigl(v_1\bigl(s+h,y+hb(s,y;\theta)\bigr)\\
&-v_2\bigl(s+h,y+hb(s,y;\theta)\bigr)\Bigr)\Bigr\}+j\Phi(h)\inf_{\xi\in U}\Bigl(v_1\bigl(s,y+g_\xi(s,y;\xi)\bigr)-v_2\bigl(s,y+g_\xi(s,y;\xi)\bigr)\Bigr)\Bigr]\\
&+i\Phi(h)\sup_{\eta\in V}\Bigl(v_1\bigl(s,y+g_\eta(s,y;\eta)\bigr)-v_2\bigl(s,y+g_\eta(s,y;\eta)\bigr)\Bigr)\biggr\},
\end{aligned}
\end{equation*}
thus
$$Fv_1(s,y)-Fv_2(s,y)\leq\max\bigl\{1-\lambda h,\Phi(h)\bigr\}\|v_1-v_2\|_\infty.$$
We proceed similarly to get
$$\bigl\|Fv_2(s,y)-Fv_1(s,y)\bigr\|_\infty\leq\max\bigl\{1-\lambda h,\Phi(h)\bigr\}\|v_1-v_2\|_\infty.$$
Finally, by the contraction mapping principle for any $0<h<1/\lambda$, there exists a unique BC function $v_h$ solution of the approximate equation (HJBI$_{h}$).
\end{proof}
\begin{remark}
The function $v_h$, unique bounded continuous solution of the approximate Hamilton-Jacobi-Bellman-Isaacs equation (HJBI$_{h}$), will be called the approximate value function. \qed
\end{remark}
\subsection{Convergence of the Approximate Value Function}\label{Sect.4.2}
In this section, we prove the convergence result for the approximate value function $v_h$. We mainly prove that the limit, when $h$ tends to zero, of $v_h$ is a VS to the HJBI equation. We prove first the following Lemma \ref{Lemma4.1}:
\begin{lemma}\label{Lemma4.1}
Let $v_h$ be the approximate value function, the family $\{v_h\}$ is uniformly equicontinuous with respect to state variable and uniformly bounded in $[t,T]\times\mathbb{R}^n$ by $\|f\|_\infty/\lambda$.
\end{lemma}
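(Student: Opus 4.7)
The plan is to leverage the fact, established in Theorem~\ref{theorem4.1}, that $v_h$ is the unique fixed point in $BC([t,T]\times\mathbb{R}^n)$ of the contraction operator $F$ defined by (\ref{F}). Both assertions of the lemma then follow by exhibiting a closed subset of $BC$ that is $F$-invariant and enjoys the desired uniform bound and uniform modulus of continuity in $y$; uniqueness of the fixed point forces $v_h$ into this subset for every admissible $h$.

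For the uniform bound, I would work with $\mathcal{K}_M := \{v \in BC : \|v\|_\infty \leq M\}$ where $M := \|f\|_\infty/\lambda$ (possibly enlarged to $M \vee \|G\|_\infty$ in view of the terminal datum). For $v \in \mathcal{K}_M$, each of the three quantities entering the min/max definition of $Fv(s,y)$ is itself bounded by $M$: the continuous-control branch satisfies $|(1-\lambda h)v(s+h,y+hb) + hf(s,y;\theta)| \leq (1-\lambda h)M + h\|f\|_\infty = M$; the two impulse branches are bounded using $\Phi(h) < 1$ together with the sign conditions $c \geq 0$ and $\chi \geq 0$ from $\textbf{H}_{c,\chi}$. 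Since the terminal datum already lies in $\mathcal{K}_M$, a direct inspection of (\ref{F}) gives $Fv \in \mathcal{K}_M$, and hence $v_h \in \mathcal{K}_M$.

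For equicontinuity in $y$, I would work inside a H\"older class $\mathcal{C}_{K,\alpha} := \{v \in BC : |v(s,y) - v(s,y')| \leq K\|y-y'\|^\alpha\}$ for some $\alpha \in (0,1]$ and $K > 0$ to be chosen. Using the Lipschitz dependence in $y$ of $b,f,g_\xi,g_\eta,c,\chi$ (uniformly in the remaining variables), each branch of $Fv$ is controlled by expressions of the form $(1-\lambda h)K(1+hC_b)^\alpha\|y-y'\|^\alpha + hC_f\|y-y'\|$ for the continuous branch and $\Phi(h)K(1+C_{g_\xi})^\alpha\|y-y'\|^\alpha + \Phi(h)C_c\|y-y'\|$ (respectively with $g_\eta, C_\chi$) for the impulse branches. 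Choosing $\alpha$ small enough that $(1-\lambda h)(1+hC_b)^\alpha \leq 1$ for all $h \in (0,h_0 \wedge 1/\lambda)$, and $K$ large enough to absorb the affine $O(\|y-y'\|)$ corrections on a bounded range of $\|y-y'\|$, makes $\mathcal{C}_{K,\alpha}$ invariant under $F$. Uniqueness of the fixed point then yields the same H\"older modulus for $v_h$ uniformly in $h$, which is exactly the uniform equicontinuity claimed.

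The main obstacle is coordinating the per-step spatial amplification with the per-step contraction, especially when $\lambda$ is not large relative to the Lipschitz constants $C_b, C_{g_\xi}, C_{g_\eta}$. For the continuous branch, one needs the H\"older exponent to satisfy $\alpha < \lambda/C_b$ so that $(1-\lambda h)(1+hC_b)^\alpha \leq 1$ for all admissible $h$; for the impulse branches, one uses the strict inequality $\Phi(h) < 1$ combined with a small enough $\alpha$ to guarantee $\Phi(h)(1+C_{g_\xi})^\alpha \leq 1$ and $\Phi(h)(1+C_{g_\eta})^\alpha \leq 1$. Arranging these three conditions simultaneously and uniformly over $h \in (0, h_0 \wedge 1/\lambda)$ is the technical heart of the argument, and is what makes $v_h$ equicontinuous with a modulus independent of $h$.
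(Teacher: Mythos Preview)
Your strategy of showing $F$-invariance of a bounded H\"older ball differs from the paper, which instead runs the Picard iterates $v_h^n = F^n v^0$ from a fixed BUC seed, proves equicontinuity of each iterate, and passes to the limit. Both routes are natural, and your invariant-set idea is cleaner when it works.

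However, your treatment of the impulse branches contains a genuine obstruction. You ask for $\alpha\in(0,1]$ such that
\[
\Phi(h)\,(1+C_{g_\xi})^{\alpha}\le 1\quad\text{and}\quad \Phi(h)\,(1+C_{g_\eta})^{\alpha}\le 1
\]
hold \emph{uniformly} for $h\in(0,h_0\wedge 1/\lambda)$. Since $\Phi$ is continuous with $\Phi(0)=1$ and $C_{g_\xi},C_{g_\eta}>0$ by $\textbf{H}_g$, we have $(1+C_{g})^{\alpha}>1$ for any $\alpha>0$, and hence $\Phi(h)(1+C_{g})^{\alpha}>1$ for all sufficiently small $h$. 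No positive $\alpha$ can satisfy your condition uniformly in $h$; the only way to make the H\"older class $F$-invariant for every $h$ is to take $\alpha=0$, which yields no modulus at all. The contraction factor you rely on for the impulse operators degenerates precisely where equicontinuity is needed, i.e., as $h\to 0$. This is not a technicality you can tune away by adjusting $K$: the spatial amplification $(1+C_{g})$ is of order one while the gain $1-\Phi(h)$ is $o(1)$.

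The continuous-control branch is fine (there the amplification $1+hC_b$ and the contraction $1-\lambda h$ are both $1+O(h)$, so the balance $\alpha<\lambda/C_b$ works uniformly). To rescue the impulse branches you need a different mechanism---for instance, tracking a modulus through the iterates as the paper does, or exploiting the strict positivity of $c,\chi$ to bound the number of successive impulses rather than relying on $\Phi(h)$ alone.
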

\begin{proof}
First, let a function $v^0$ be non-negative and BUC with respect to $y\in\mathbb{R}^n$. We have, for any $s\in[t,T)$, that
$$\forall\varepsilon>0,\exists\delta_0>0\;\text{such that}\;\forall y_1,y_2\in\mathbb{R}^n,\;\|y_1-y_2\|<\delta_0,\;\text{implies}\;\bigl|v^0(s,y_1)-v^0(s,y_2)\bigr|<\varepsilon/2.$$
Define now, for $0<h<1/\lambda,\;(s,y)\in[t,T)\times\mathbb{R}^n$ and $F$ defined as in (\ref{F}), a family of functions $v_{h}^{n}(s,y)=F^nv^0(s,y)$ which converges uniformly towards the unique solution $v_h$ to the approximate equation (HJBI$_{h}$). We have, for any $s\in[t,T)$, that
\begin{equation*}
\begin{aligned}
\bigl|v_h^1(s,y_1)&-v_h^1(s,y_2)\bigr|\leq\max_{i\in\{0,1\}}\biggl\{(1-i)\min_{j\in\{0,1\}}\biggl[(1-j)\inf_{\theta\in\mathbb{R}^l}\Bigl\{(1-\lambda h)\Bigl|v^0\bigl(s+h,y_1+hb(s,y_1;\theta)\bigr)\\
&-v^0\bigl(s+h,y_2+hb(s,y_2;\theta)\bigr)\Bigr|+h\bigl|f(s,y_1;\theta)-f(s,y_2;\theta)\bigr|\Bigr\}\\
&+j\Phi(h)\inf_{\xi\in U}\Bigl(\Bigl|v^0\bigl(s,y_1+g_\xi(s,y_1;\xi)\bigr)-v^0\bigl(s,y_2+g_\xi(s,y_2;\xi)\bigr)\Bigr|+\bigl|c(s,y_1;\xi)-c(s,y_2;\xi)\bigr|\Bigr)\biggr]\\
&+i\Phi(h)\sup_{\eta\in V}\Bigl(\Bigl|v^0\bigl(s,y_1+g_\eta(s,y_1;\eta)\bigr)-v^0\bigl(s,y_2+g_\eta(s,y_2;\eta)\bigr)\Bigr|+\bigl|\chi(s,y_1;\eta)-\chi(s,y_2;\eta)\bigr|\Bigr)\biggr\}.
\end{aligned}
\end{equation*}
By letting $\delta_1=\min\bigl\{\delta_0/(1+C_b/\lambda),\lambda\varepsilon/2C_f,\delta_0/C_{g_\xi},\delta_0/C_{g_\eta},\varepsilon/2C_c,\varepsilon/2C_\chi\bigr\}$, we get for all $0<h<1/\lambda$, for all $s\in[t,T)$ and for all $\|y_1-y_2\|<\delta_1$, that $\bigl|v^1_h(s,y_1)-v^1_h(s,y_2)\bigr|<\varepsilon$. The family $\{v^1_h\}$ is then uniformly equicontinuous with respect to state variable and by induction for all $n\geq1$, the family $\{v^n_h\}$ is uniformly equicontinuous. Now we prove that the family $\{v_h\}$ is also uniformly equicontinuous. Let $s\in[t,T)$, since we have that
$$\forall\varepsilon>0,\;\exists\delta>0,\;\forall y_1,y_2\in\mathbb{R}^n,\;\|y_1-y_2\|\leq\delta,\;\text{implies}\;\bigl\|v^n_h(s,y_1)-v^n_h(s,y_2)\bigr\|<\varepsilon/3,$$
and $\forall\varepsilon>0,\;\exists N>0,\;\forall n>N,\;\|y_1-y_2\|\leq\delta,$ implies $\bigl\|v^n_h(s,y_1)-v_h(s,y_1)\bigr\|<\varepsilon/3,$ and $\bigl\|v^n_h(s,y_2)-v_h(s,y_2)\bigr\|<\varepsilon/3$.
It follows that the family $\{v_h\}$ is uniformly equicontinuous.
\par Since $v^n_h=F^nv^0$ tends to the approximate value function $v_h$ and $f$ is non-negative, we get that $v_h$ is non-negative, we then use the fact that $Fv_h=v_h$ to deduce that $$\|v_h\|_\infty\leq(1-\lambda h)\|v_h\|_\infty+h\|f\|_\infty,$$
thus $v_h$ results to be uniformly bounded by $\|f\|_\infty/\lambda$.
\end{proof}
\begin{theorem}\label{theorem4.2}
The approximate value function $v_h$, as the time discretization step $h$ goes to zero, converges locally uniformly towards the value function of the zero-sum DG control problem.
\end{theorem}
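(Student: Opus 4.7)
The plan is to combine the regularity and compactness of the family $\{v_h\}$ with the uniqueness of the viscosity solution to (HJBI) established in Corollary \ref{corollary3.1}. First, Lemma \ref{Lemma4.1} supplies the uniform bound $\|v_h\|_\infty\leq\|f\|_\infty/\lambda$ and uniform equicontinuity in the state variable. A bootstrap argument based on the fixed-point identity $Fv_h=v_h$ from (\ref{F}), run backward from the terminal datum $v_h(T,\cdot)=G$, yields uniform equicontinuity in time as well: on the non-obstacle branch of (HJBI$_h$), $v_h(s,\cdot)-v_h(s+h,\cdot)$ is controlled by quantities of order $h\bigl(\|b\|_\infty L_y+\|f\|_\infty+\lambda\|v_h\|_\infty\bigr)$ where $L_y$ is the (uniform in $h$) spatial Lipschitz constant, while on the obstacle branches it is controlled by the $s$-Lipschitz regularity of $g_\xi,g_\eta,c,\chi$ transmitted through the state equicontinuity of $v_h(s+h,\cdot)$. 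Iterating over the $\lceil(T-t)/h\rceil$ time steps and using that the contraction factor $\max\{1-\lambda h,\Phi(h)\}$ from the proof of Theorem \ref{theorem4.1} is at most $1$ gives an $s$-modulus uniform in $h$. By the Arzel\`a--Ascoli theorem one then extracts a subsequence $v_{h_k}$ converging locally uniformly to some bounded uniformly continuous function $v$ on $[0,T]\times\mathbb{R}^n$ with $v(T,\cdot)=G$.

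The second step is to verify that $v$ is a viscosity solution of (HJBI). Fix $\phi\in C^{1,1}\bigl([t,T)\times\mathbb{R}^n\bigr)$ and let $(s_0,y_0)$ be a strict local maximum of $v-\phi$. By local uniform convergence there exist $(s_k,y_k)\to(s_0,y_0)$ with $v_{h_k}-\phi$ attaining a local maximum at $(s_k,y_k)$ and $v_{h_k}(s_k,y_k)\to v(s_0,y_0)$. Evaluating (HJBI$_h$) at $(s_k,y_k)$, the two obstacle slots $v_{h_k}(s_k,y_k)-\Phi(h_k)\mathcal{H}_{sup}^c v_{h_k}(s_k,y_k)$ and $v_{h_k}(s_k,y_k)-\Phi(h_k)\mathcal{H}_{inf}^\chi v_{h_k}(s_k,y_k)$ pass to $v(s_0,y_0)-\mathcal{H}_{sup}^c v(s_0,y_0)$ and $v(s_0,y_0)-\mathcal{H}_{inf}^\chi v(s_0,y_0)$, thanks to Proposition \ref{Proposition2.5}, the continuity of $g_\xi,g_\eta,c,\chi$ and $\Phi(h_k)\to 1$. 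For the first slot, substitute the local-maximum inequality
$$v_{h_k}\bigl(s_k+h_k,y_k+h_k b(s_k,y_k;\theta)\bigr)\leq v_{h_k}(s_k,y_k)+\phi\bigl(s_k+h_k,y_k+h_k b(s_k,y_k;\theta)\bigr)-\phi(s_k,y_k)$$
into $H_{h_k}$ from Definition \ref{def2.5}, divide by $h_k>0$, and Taylor expand to recover in the limit $-\frac{\partial\phi}{\partial s}(s_0,y_0)+\lambda v(s_0,y_0)+H\bigl(s_0,y_0,D_y\phi(s_0,y_0)\bigr)\leq 0$. Combining these three inequalities through the $\max\min$ structure gives the viscosity subsolution property at $(s_0,y_0)$; the supersolution property is obtained by the mirror argument at local minima of $v-\phi$.

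Since $v$ is thus a bounded uniformly continuous viscosity solution of (HJBI) with terminal value $G$, Corollary \ref{corollary3.1} identifies $v$ with the value function $V$ of the zero-sum DG. The limit being unique and independent of the extracted subsequence, a standard argument by contradiction shows that the entire family $v_h$ itself must converge locally uniformly to $V$ as $h\to 0$.

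The hard part is the passage to the limit in the first slot of (HJBI$_h$): unlike the obstacle terms, $H_h$ is of order $h$ on smooth arguments, so its viscosity information is encoded in its sign and one recovers the continuous-time operator $-\frac{\partial}{\partial s}+\lambda\cdot+H$ only after dividing by $h_k>0$ following the test-function substitution. A secondary technical point is restricting the $\inf_{\theta\in\mathbb{R}^l}$ in both $H$ and $H_h$ to a common compact set uniformly in $k$ so that the Taylor remainders remain uniform; this is guaranteed by the boundedness of $b$ and $f$ in $\textbf{H}_b$ and $\textbf{H}_f$ together with the reduction employed in the proof of Lemma \ref{Lemma3.2}. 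A third delicate issue is the time-equicontinuity claim in the first paragraph, which must be checked carefully so that the backward-in-time iteration does not accumulate the modulus beyond a fixed multiple of the terminal modulus of $G$.
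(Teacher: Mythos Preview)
Your proposal is correct and follows the same route as the paper: Arzel\`a--Ascoli compactness via Lemma \ref{Lemma4.1}, perturbed maxima of $v_{h_k}-\phi$, substitution of the test-function inequality into $H_{h_k}$ followed by Taylor expansion, passage to the limit in the obstacle slots via Proposition \ref{Proposition2.5}, and identification of the limit with $V$ through Corollary \ref{corollary3.1}. Two minor remarks: the first-slot manipulation does not yield $-\partial_s\phi+\lambda v+H\le0$ standalone but only the full $\max\{\min[\cdot,\cdot];\cdot\}\le0$ after rescaling that slot by $1/h_k$ and using monotonicity of $\max$/$\min$ (the paper handles this via the equivalent weighted form of Proposition \ref{proposition4.1}); and your explicit time-equicontinuity bootstrap is actually more careful than the paper, which invokes Arzel\`a--Ascoli from Lemma \ref{Lemma4.1} although that lemma establishes only state-equicontinuity.
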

\begin{proof}
Let $v_h$ be the approximate value function. From Lemma \ref{Lemma4.1}, the family $\{v_h\}$ is uniformly equicontinuous and uniformly bounded, we then get, using the Ascoli-Arzelà theorem (see e.g. \cite{BC97}), that from any sequence $h_r$ converging towards $0$, there exists a sub-sequence $h_s$ of $h_r$ and a BUC function $v$ such that $v_{h_s}$ converges locally uniformly in $[t,T]\times\mathbb{R}^n$ towards $v$. Now, we only need to prove that $v$ is a VS to the HJBI equation. Let $\phi\in C^{1,1}\bigl([t,T)\times\mathbb{R}^n\bigr)$ and $(\overline{t},\overline{x})$ be a strict local maximum point of $v-\phi$. Then there exists $\overline{B}_\delta(\overline{x})$ a closed ball in $\mathbb{R}^n$ of radius $\delta>0$ centered at $\overline{x}$ such that
$$(v-\phi)(\overline{t},\overline{x})>(v-\phi)(t,x),\;\text{for all}\;(t,x)\in I\backslash\{\overline{t}\}\times\overline{B}_\delta(\overline{x})\backslash\{\overline{x}\},$$
where $I:=[\overline{t}-\delta,\overline{t}+\delta]\subset[0,T]$. Let $(\overline{t}_{h_s},\overline{x}_{h_s})$ be a maximum point of $v_{h_s}-\phi$ over $I\times\overline{B}_\delta(\overline{x})$, let $\overline{t}_0$ and $\overline{x}_0$ be clusters point of the sequences $\{\overline{t}_{h_s}\}$ and $\{\overline{x}_{h_s}\}$, respectively, and denote $\{\overline{t}_{h_{s_p}}\}$ and $\{\overline{x}_{h_{s_p}}\}$ two sub-sequences converging to $\overline{t}_0$ and $\overline{x}_0$, respectively. By definition we have
$$(v_{h_{s_p}}-\phi)(\overline{t}_{h_{s_p}},\overline{x}_{h_{s_p}})\geq(v_{h_{s_p}}-\phi)(t,x),\;\text{for all}\;(t,x)\in I\times\overline{B}_\delta(\overline{x}).$$
Using the continuity of $v_{h_{s_p}}$ and $\phi$, and the fact that $v_{h_s}$ converges locally uniformly towards $v$, we get
$$(v-\phi)(\overline{t}_0,\overline{x}_0)\geq(v-\phi)(t,x),\;\text{for all}\;(t,x)\in I\times\overline{B}_\delta(\overline{x}).$$
Thus, by the uniqueness of the maximum, $(\overline{t}_0,\overline{x}_0)=(\overline{t},\overline{x})$ which means that the clusters point $\overline{t}_0$ and $\overline{x}_0$ are unique, we then get that the whole sequences $\overline{t}_{h_s}$ and $\overline{x}_{h_s}$ converge toward $\overline{t}$ and $\overline{x}$, respectively. Since $h_s$ is a small number and $b$ is assumed to be bounded, we get that the points $\overline{t}_{h_s}+h_s$ and $\overline{x}_{h_s}+h_{s}b(\overline{t}_{h_s},\overline{x}_{h_s};\theta)$ remain in $I$ and $\overline{B}_\delta(\overline{x})$, respectively, for all $\theta\in\mathbb{R}^l$. Then it follows
$$v_{h_s}(\overline{t}_{h_s},\overline{x}_{h_s})-\phi(\overline{t}_{h_s},\overline{x}_{h_s})\geq v_{h_s}\bigl(\overline{t}_{h_s}+h_s,\overline{x}_{h_s}+h_sb(\overline{t}_{h_s},\overline{x}_{h_s};\theta)\bigr)-\phi\bigl(\overline{t}_{h_s}+h_s,\overline{x}_{h_s}+h_sb(\overline{t}_{h_s},\overline{x}_{h_s};\theta)\bigr).$$
Since $v_h$ is a solution to the approximate equation (HJBI$_{h}$), the last inequality combined to the expression of the approximate equation (HJBI$_{h}$) in the proof of Proposition \ref{proposition4.1} gives
\begin{equation*}
\begin{aligned}
\max_{i\in\{0,1\}}\biggl\{(1-i)\min_{j\in\{0,1\}}\Bigl[&(1-j)\inf_{\theta\in\mathbb{R}^l}\Bigl\{\phi(\overline{t}_{h_s},\overline{x}_{h_s})-\phi\bigl(\overline{t}_{h_s}+h_s,\overline{x}_{h_s}+h_sb(\overline{t}_{h_s},\overline{x}_{h_s};\theta)\bigr)\\
&+\lambda h_sv_{h_s}\bigl(\overline{t}_{h_s}+h_s,\overline{x}_{h_s}+{h_s}b(\overline{t}_{h_s},\overline{x}_{h_s};\theta)\bigr)-h_sf(\overline{t}_{h_s},\overline{x}_{h_s};\theta)\Bigr\}\\
&+j\bigl(v_{h_s}(\overline{t}_{h_s},\overline{x}_{h_s})-\Phi(h_s)\mathcal{H}_{sup}^c v_{h_s}(\overline{t}_{h_s},\overline{x}_{h_s})\bigr)\Bigr]\\
&+i\bigl(v_{h_s}(\overline{t}_{h_s},\overline{x}_{h_s})-\Phi(h_s)\mathcal{H}_{inf}^\chi v_{h_s}(\overline{t}_{h_s},\overline{x}_{h_s})\bigr)\biggr\}\leq 0.
\end{aligned}
\end{equation*}
Since $\phi\in C^{1,1}\bigl([t,T)\times\mathbb{R}^n\bigr)$, then for some points $s$ and $y$ satisfying
$$s\in[\overline{t}_{h_s},\overline{t}_{h_s}+h_s]\;\text{and}\;y\in\bigl[\overline{x}_{h_s},\overline{x}_{h_s}+h_sb(\overline{t}_{h_s},\overline{x}_{h_s};\theta)\bigr],$$
we get that
\begin{equation*}
\begin{aligned}
\max_{i\in\{0,1\}}\biggl\{&(1-i)\min_{j\in\{0,1\}}\Bigl[(1-j)\inf_{\theta\in\mathbb{R}^l}\Bigl\{-h_s\frac{\partial\phi}{\partial s}(s,y)+\lambda h_sv_{h_s}\bigl(\overline{t}_{h_s}+h_s,\overline{x}_{h_s}+{h_s}b(\overline{t}_{h_s},\overline{x}_{h_s};\theta)\bigr)\\
&-h_sD_y\phi(s,y).b(s,y;\theta)-h_sf(\overline{t}_{h_s},\overline{x}_{h_s};\theta)\Bigr\}+j\bigl(v_{h_s}(\overline{t}_{h_s},\overline{x}_{h_s})-\Phi(h_s)\mathcal{H}_{sup}^c v_{h_s}(\overline{t}_{h_s},\overline{x}_{h_s})\bigr)\Bigr]\\
&+i\bigl(v_{h_s}(\overline{t}_{h_s},\overline{x}_{h_s})-\Phi(h_s)\mathcal{H}_{inf}^\chi v_{h_s}(\overline{t}_{h_s},\overline{x}_{h_s})\bigr)\biggr\}\leq 0.
\end{aligned}
\end{equation*}
Then the result in Proposition \ref{proposition4.1} yields to the following inequality
\begin{equation*}
\begin{aligned}
\max_{i\in\{0,1\}}\biggl\{&(1-i)\min_{j\in\{0,1\}}\Bigl[(1-j)\inf_{\theta\in\mathbb{R}^l}\Bigl\{-\frac{\partial\phi}{\partial s}(s,y)+\lambda v_{h_s}\bigl(\overline{t}_{h_s}+h_s,\overline{x}_{h_s}+{h_s}b(\overline{t}_{h_s},\overline{x}_{h_s};\theta)\bigr)\\
&-D_y\phi(s,y).b(s,y;\theta)-f(\overline{t}_{h_s},\overline{x}_{h_s};\theta)\Bigr\}+j\bigl(v_{h_s}(\overline{t}_{h_s},\overline{x}_{h_s})-\Phi(h_s)\mathcal{H}_{sup}^c v_{h_s}(\overline{t}_{h_s},\overline{x}_{h_s})\bigr)\Bigr]\\
&+i\bigl(v_{h_s}(\overline{t}_{h_s},\overline{x}_{h_s})-\Phi(h_s)\mathcal{H}_{inf}^\chi v_{h_s}(\overline{t}_{h_s},\overline{x}_{h_s})\bigr)\biggr\}\leq 0,
\end{aligned}
\end{equation*}
we then let $h_s$ goes to zero and use Proposition \ref{Proposition2.5} to get the convergence of the terms $\mathcal{H}_{sup}^{c}v_{h_s}(\overline{t}_{h_s},\overline{x}_{h_s})$ and $\mathcal{H}_{inf}^{\chi}v_{h_s}(\overline{t}_{h_s},\overline{x}_{h_s})$ toward $\mathcal{H}_{sup}^{c}v(\overline{t},\overline{x})$ and $\mathcal{H}_{inf}^{\chi}v(\overline{t},\overline{x})$, respectively, to finally deduce that
\begin{equation*}
\begin{aligned}
\max_{i\in\{0,1\}}\biggl\{&(1-i)\min_{j\in\{0,1\}}\Bigl[(1-j)\inf_{\theta\in\mathbb{R}^l}\Bigl\{-\frac{\partial\phi}{\partial s}(\overline{t},\overline{x})+\lambda v\bigl(\overline{t},\overline{x}\bigr)-D_y\phi(\overline{t},\overline{x}).b(\overline{t},\overline{x};\theta)-f(\overline{t},\overline{x};\theta)\Bigr\}\\
&+j\bigl(v(\overline{t},\overline{x})-\mathcal{H}_{sup}^c v(\overline{t},\overline{x})\bigr)\Bigr]+i\bigl(v(\overline{t},\overline{x})-\mathcal{H}_{inf}^\chi v(\overline{t},\overline{x})\bigr)\biggr\}\leq 0.
\end{aligned}
\end{equation*}
The last inequality shows, using the expression of HJBI equation given in the proof of Proposition \ref{Proposition3.1}, that the function $v$ is a viscosity sub-solution to the HJBI equation. Similarly we prove the viscosity super-solution property. The proof of Theorem \ref{theorem4.2} is then finished.
\end{proof}
\begin{corollary}
Assuming $\textbf{H}_b$, $\textbf{H}_g$, $\textbf{H}_f$, $\textbf{H}_{c,\chi}$ and $\textbf{H}_{G}$, the value function of the zero-sum DG control problem is the limit, when $h$ goes to zero, of the approximate value function $v_h$, i.e., the limit of the unique bounded continuous solution of the approximate Hamilton-Jacobi-Bellman-Isaacs equation (HJBI$_{h}$). \qed
\end{corollary}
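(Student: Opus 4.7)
The plan is to assemble the Corollary directly from the preceding machinery: Theorem \ref{theorem4.1} gives existence and uniqueness of $v_h$, Theorem \ref{theorem4.2} gives the locally uniform convergence of $v_h$ to some limit that is a viscosity solution of (HJBI), and the comparison theorem of Section \ref{Sect.3.2} together with Corollary \ref{corollary3.1} forces that limit to coincide with the value function $V$. No new analytic work is needed; the task is only to chain these results cleanly.

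Concretely, I would proceed in three short steps. First, I would invoke Theorem \ref{theorem4.1} to extract, for each $0<h<1/\lambda$, the unique bounded continuous approximate value function $v_h$ solving (HJBI$_h$), and I would recall from Lemma \ref{Lemma4.1} that the family $\{v_h\}_{0<h<1/\lambda}$ is uniformly bounded by $\|f\|_\infty/\lambda$ and uniformly equicontinuous in the state variable. Second, I would apply Theorem \ref{theorem4.2}: by Ascoli-Arzel\`a applied along any sequence $h_r\downarrow 0$, one extracts a subsequence along which $v_{h_r}\to v$ locally uniformly on $[t,T]\times\mathbb{R}^n$, and the argument in the proof of that theorem shows $v$ is a BUC viscosity solution of (HJBI).

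Third, I would close the loop via uniqueness. By Corollary \ref{corollary3.1}, the value function $V$ of the zero-sum DG is the unique BUC viscosity solution of (HJBI); hence $v=V$. Since this identification holds for every subsequential limit, a standard subsequence argument (if some sequence $h_n\downarrow 0$ did not produce $v_{h_n}\to V$ locally uniformly, extract a further subsequence violating the equicontinuous/bounded hypothesis, contradicting what was just shown) gives the full locally uniform convergence $v_h\to V$ as $h\to 0$.

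There is essentially no obstacle here beyond bookkeeping: the analytic content lies in Theorems \ref{theorem4.1}, \ref{theorem4.2} and the comparison principle of Section \ref{Sect.3.2}, all of which are already in place. The only point that deserves a sentence of care is the passage from subsequential convergence (provided by Ascoli-Arzel\`a) to convergence of the whole family, which is handled by the uniqueness of the viscosity solution and a contradiction argument as above.
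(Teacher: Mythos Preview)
Your proposal is correct and matches the paper's approach: the Corollary is stated in the paper with no proof (just \qed), precisely because it is an immediate consequence of Theorem \ref{theorem4.1}, Theorem \ref{theorem4.2}, and the uniqueness result of Corollary \ref{corollary3.1}, exactly as you chain them. The only nuance worth noting is that the paper's proof of Theorem \ref{theorem4.2} already implicitly absorbs your third step (uniqueness forcing the subsequential limit to equal $V$, hence full convergence), so the Corollary is really just a restatement of Theorem \ref{theorem4.2} emphasizing the identification with the value function.
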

Now we move to the third contribution of the paper as mentioned in Remark \ref{Remark2.3}.
\section{Verification Theorem}\label{Sect.5}
This section uses the fact that the approximate value function converges to the value function of the considered DG control problem to provide a NE strategy for this game, whose definition is given by the following:
\begin{definition}[Nash-Equilibrium]
Given $(t,x)\in[0,T]\times\mathbb{R}^n$, we say that the zero-sum DG control problem studied admits $(\psi^*,v^*)\in\Psi\times\mathcal{V}$ as a NE if the two strategies $\psi^*$ and $v^*$ satisfies:
\begin{equation*}
\left\{
\begin{aligned}
J(t,x;\psi^*,v^*)&\geq J(t,x;\psi,v^*)\;\text{for all}\;\psi\in\Psi;\\
J(t,x;\psi^*,v^*)&\leq J(t,x;\psi^*,v)\;\text{for all}\;v\in\mathcal{V}.
\end{aligned}
\right.
\end{equation*} \qed
\end{definition}
In view of the above definition, the value function of the NE $(\psi^*,v^*)\in\Psi\times\mathcal{V}$ is defined for all $(t,x)\in[0,T]\times\mathbb{R}^n$ by $$V(t,x):=J\bigl(t,x;\psi^*,v^*).$$
\par We will be concerned here with the optimal strategies for our two-player, zero-sum, deterministic DG continuous and impulse controls problem. We first suppose, for $(t,x)\in[0,T]\times\mathbb{R}^n$, that a classical solution $v(t,x)$ of the HJBI equation and an approximate value function $v_h(t,x)$ exist and satisfy, for all $y\in\mathbb{R}^n$, $v(T,y)=G(y)$ and $v_h(T,y)=G(y)$, respectively. Next, let $h$ be a constant which tends to zero and $\Phi$ be defined as in Section \ref{Sect.2.3}. Then we construct the optimal strategies of each player $\psi^*:=\bigl(\theta^*(.),u^*:=(\tau_m^*,\xi_m^*)_{m\in\mathbb{N}^*}\bigr)$ and $v^*:=(\rho_k^*,\eta_k^*)_{k\in\mathbb{N}^*}$ in an inductive way as follows:
\begin{equation*}
\theta^*(.):=
\left\{
\begin{aligned}
&\theta^*(t^-)=:\theta_0^*\in\mathbb{R}^l\;\text{initial value of the optimal continuous control};\\
&\begin{aligned}
\theta^*(s)=\Bigl\{\theta^*\in\mathbb{R}^l:&\;v_h\bigl(s,y_{t,x}^{\psi^*,v^*}(s)\bigr)-(1-\lambda h)v_h\Bigl(s+h,y_{t,x}^{\psi^*,v^*}(s)+hb\bigl(s,y_{t,x}^{\psi^*,v^*}(s);\theta^*\bigr)\Bigr)\\
&-hf\bigl(s,y_{t,x}^{\psi^*,v^*}(s);\theta^*\bigr)=0\Bigr\},\;\text{where}\;s\in(t,T]\;\text{and}\;s\neq\tau_m^*,\rho_k^*\;\text{for all}\;m,k\geq 1,
\end{aligned}\\
&\text{with}\;\theta^*=\arg\sup_{\theta\in\mathbb{R}^l}\Bigl\{(1-\lambda h)v_h\bigl(s+h,y_{t,x}^{\psi^*,v^*}(s)+hb(s,y_{t,x}^{\psi^*,v^*}(s);\theta)\bigr)+hf(s,y_{t,x}^{\psi^*,v^*}(s);\theta)\Bigl\};
\end{aligned}
\right.
\end{equation*}
\begin{equation*}
u^*:=
\left\{
\begin{aligned}
&\tau_1^*=t^-\;\text{and}\;\xi_1^*=0\in U\subset\mathbb{R}^p;\\
&\text{and for any}\;m\geq 2,\\
&\tau_m^*=
\left\{
\begin{aligned}
&\inf\Biggl\{s>\tau_{m-1}^*:v_h\bigl(s,y_{t,x}^{\psi^*,v^*}(s^-)\bigr)-\Phi(h)\sup_{\xi\in U}\biggl\{v_h\Bigl(s,y_{t,x}^{\psi^*,v^*}(s^-)+g_\xi\bigl(s,y_{t,x}^{\psi^*,v^*}(s^-);\xi\bigr)\Bigr)\\
&-c\bigl(s,y_{t,x}^{\psi^*,v^*}(s^-);\xi\bigr)\biggr\}\geq0,\;\text{where}\;s<T\;\text{and}\;\tau_m^*\neq\rho_k^*\;\text{for all}\;k\geq 1\Biggr\};\\
&T\;\text{if the above set is empty};
\end{aligned}
\right.\\
&\xi_m^*=
\left\{
\begin{aligned}
&\arg\sup_{\xi\in U}\biggl\{v_h\Bigl(\tau_m^*,y_{t,x}^{\psi^*,v^*}({\tau_m^*}^-)+g_\xi\bigl(\tau_m^*,y_{t,x}^{\psi^*,v^*}({\tau_m^*}^-);\xi\bigr)\Bigr)-c\bigl(\tau_m^*,y_{t,x}^{\psi^*,v^*}({\tau_m^*}^-);\xi\bigr)\biggr\}\;\text{if}\;\tau_m^*<T;\\
&\xi\;\text{solution of}\;g_\xi\bigl(T,y_{t,x}^{\psi^*,v^*}(T^-);\xi\bigl)=0\;\text{if}\;\tau_m^*=T\;(\text{no intervention}),
\end{aligned}
\right.
\end{aligned}
\right.
\end{equation*}
and
\begin{equation*}
v^*:=
\left\{
\begin{aligned}
&\rho_1^*=t\;\text{and}\;\eta_1^*=\eta_1\in V\subset\mathbb{R}^q;\\
&\text{and for any}\;k\geq 2,\\
&\rho_k^*=
\left\{
\begin{aligned}
&\inf\Biggl\{s>\rho_{k-1}^*:v_h\bigl(s,y_{t,x}^{\psi^*,v^*}(s^-)\bigr)-\Phi(h)\inf_{\eta\in V}\biggl\{v_h\Bigl(s,y_{t,x}^{\psi^*,v^*}(s^-)+g_\eta\bigl(s,y_{t,x}^{\psi^*,v^*}(s^-);\eta\bigr)\Bigr)\\
&+\chi\bigl(s,y_{t,x}^{\psi^*,v^*}(s^-);\eta\bigr)\biggr\}\leq0,\;\text{where}\;s<T\;\Biggr\};\\
&T\;\text{if the above set is empty};
\end{aligned}
\right.\\
&\eta_k^*=
\left\{
\begin{aligned}
&\arg\inf_{\eta\in V}\biggl\{v_h\Bigl(\rho_k^*,y_{t,x}^{\psi^*,v^*}({\rho_k^*}^-)+g_\eta\bigl(\rho_k^*,y_{t,x}^{\psi^*,v^*}({\rho_k^*}^-);\eta\bigr)\Bigr)+\chi\bigl(\rho_k^*,y_{t,x}^{\psi^*,v^*}({\rho_k^*}^-);\eta\bigr)\biggr\}\;\text{if}\;\rho_k^*<T;\\
&\eta\;\text{solution of}\;g_\eta\bigl(T,y_{t,x}^{\psi^*,v^*}(T^-);\eta\bigl)=0\;\text{if}\;\rho_k^*=T\;(\text{no intervention}).
\end{aligned}
\right.
\end{aligned}
\right.
\end{equation*}
Next, we show that the above strategies are optimal and form a NE for the value function when the time discretization step $h$ goes to zero.
\par The following Theorem \ref{Verification} announces a NE for the DG control problem we have considered in this paper, it gives a verification result and confirms that $(\psi^*,v^*)$ defined in the above are optimal strategies for both players:
\begin{theorem}[Verification Theorem]\label{Verification}
Assuming that $(\psi^*,v^*)\in\Psi\times\mathcal{V}$ and letting $h$ goes to zero, if the value function of the zero-sum DG control problem is in $C^{1,1}\bigl([0,T]\times\mathbb{R}^n\bigr)$, then it satisfies
$$V(t,x)=J(t,x;\psi^*,v^*)\;\text{for any}\;(t,x)\in[0,T]\times\mathbb{R}^n.$$
\end{theorem}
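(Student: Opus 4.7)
The plan is to exploit the assumed $C^{1,1}$ regularity of $V$ together with the locally uniform convergence $v_h\to V$ (Theorem \ref{theorem4.2}) to pass to the limit $h\to 0$ in the construction of $(\psi^*,v^*)$, thereby producing strategies that saturate each branch of the HJBI equation pointwise along the resulting trajectory. Once this is done, the identity $V(t,x)=J(t,x;\psi^*,v^*)$ will follow by computing $s\mapsto V(s,y_{t,x}^{\psi^*,v^*}(s))\exp(-\lambda(s-t))$ on $[t,T]$, treating the continuation phases and the impulse times separately, and then reading off the four terms of $J$.

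First I would make precise the limiting strategies. Rewriting the defining relation of $\theta^*(s)$ in the form given by Proposition \ref{proposition4.1}, dividing by $h$, and using a first-order Taylor expansion permitted by $V\in C^{1,1}$ shows that as $h\to 0$ the continuous control $\theta^*(s)$ is a measurable selection realising the infimum in $H(s,y,D_yV)$. Using continuity of the cost operators (Proposition \ref{Proposition2.5}) and the convergence $v_h\to V$, the defining conditions for $\tau_m^*,\xi_m^*$ (resp.\ $\rho_k^*,\eta_k^*$) become: $\tau_m^*$ is the first time after $\tau_{m-1}^*$ at which $V=\mathcal{H}_{sup}^c V$, with $\xi_m^*$ attaining the supremum there, and analogously for $\rho_k^*,\eta_k^*$ via $\mathcal{H}_{inf}^\chi$. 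Set $y(s):=y_{t,x}^{\psi^*,v^*}(s)$ and $\phi(s):=V(s,y(s))\exp(-\lambda(s-t))$. On any continuation interval the HJBI equation reduces to $-\partial_s V+\lambda V+H(s,y,D_yV)=0$, and the selection of $\theta^*$ yields $\partial_s V(s,y(s))+D_yV(s,y(s))\cdot b(s,y(s);\theta^*(s))=\lambda V(s,y(s))-f(s,y(s);\theta^*(s))$; hence the chain rule gives $\phi'(s)=-f(s,y(s);\theta^*(s))\exp(-\lambda(s-t))$. At $\tau_m^*<T$, the equality $V=\mathcal{H}_{sup}^c V$ with $\xi_m^*$ attaining the supremum gives $V(\tau_m^*,y(\tau_m^{*+}))=V(\tau_m^*,y(\tau_m^{*-}))+c(\tau_m^*,y(\tau_m^{*-});\xi_m^*)$, so $\phi(\tau_m^{*+})-\phi(\tau_m^{*-})=c(\tau_m^*,y(\tau_m^{*-});\xi_m^*)\exp(-\lambda(\tau_m^*-t))$, and an analogous computation at $\rho_k^*$ yields $\phi(\rho_k^{*+})-\phi(\rho_k^{*-})=-\chi(\rho_k^*,y(\rho_k^{*-});\eta_k^*)\exp(-\lambda(\rho_k^*-t))$. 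Integrating $\phi'$ on the continuation intervals, summing the jumps between $t$ and $T$, and applying the terminal condition $V(T,\cdot)=G(\cdot)$ reproduces exactly the four terms of $J(t,x;\psi^*,v^*)$, yielding $V(t,x)=J(t,x;\psi^*,v^*)$. The product $\prod_k\ind_{\{\tau_m^*\neq\rho_k^*\}}$ in $J$ is handled by noting that whenever a prospective maximiser time coincides with a minimiser time, only the minimiser's impulse affects the state, so the corresponding maximiser jump contributes nothing to $\phi$ and must be excluded from the sum, which is precisely what the indicator does.

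The main obstacle will be to make the $h\to 0$ passage fully rigorous. Three points deserve care: the existence of a measurable selector $\theta^*(\cdot)$, which follows from standard selection theorems applied to the continuous Hamiltonian $H$; the convergence of the $h$-dependent impulse times $\tau_m^*(h),\rho_k^*(h)$ and values $\xi_m^*(h),\eta_k^*(h)$ to limiting objects defined through $V$, which uses both the local uniform convergence $v_h\to V$ and Proposition \ref{Proposition2.5}; and finally the absence of accumulation of impulse times in $[t,T]$, which is guaranteed by the uniform strict positivity $\inf c>0$, $\inf\chi>0$ together with the sub-additivity in $\textbf{H}_{c,\chi}$ and the boundedness of $V$ (Proposition \ref{Proposition2.2}), so that only finitely many impulses occur on any compact interval. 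Once these are settled the pathwise integration argument is deterministic and routine.
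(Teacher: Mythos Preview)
Your argument is correct in outline but follows a genuinely different route from the paper. The paper does not perform a pathwise chain-rule computation on $s\mapsto V(s,y(s))\exp(-\lambda(s-t))$. Instead, it introduces auxiliary \emph{discrete-time} differential games $V_h^{\pm}$ with a discrete state recursion $y_{t,x}^h$ and a discrete functional $J_h$, invokes (following Capuzzo-Dolcetta \cite{CD83}) the representation formula $v_h=V_h^-=V_h^+$, reads off discrete-time optimal controls $(\psi_h^*,v_h^*)$ from the structure of the region $\mathcal{R}$ where $H_h=0$, and then passes to the limit in $J_h(t,x;\psi_h^*,v_h^*)$ using the results of \cite{CDI84} together with the convergence $v_h\to V$ of Theorem~\ref{theorem4.2}. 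Thus the paper leans on the discrete approximation machinery of Section~\ref{Sect.4} and on the cited references for the key limiting step, whereas you exploit the $C^{1,1}$ hypothesis directly via the chain rule, which makes the verification argument self-contained and transparent (your jump computations and the resulting identification with the four terms of $J$ are correct). The trade-off is that the paper's route ties the verification theorem back to the numerical scheme it has just analysed, making the NE strategies computable in principle from $(\text{HJBI}_h)$; your route is cleaner analytically but does not by itself connect the limiting $(\psi^*,v^*)$ to the discrete objects produced by Algorithm~\ref{Alg.1}. The three technical points you flag (measurable selection, convergence of impulse data, finiteness of impulses) are the right ones, and the justifications you sketch are adequate for this level of detail.
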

\begin{proof}
We begin the proof by assuming that both HJBI equation and the approximate equation (HJBI$_{h}$) have solutions denoted, respectively, by $V(t,x)$ and $v_h(t,x)$ for $(t,x)\in[0,T]\times\mathbb{R}^n$. Next, we consider the following related discrete-time DG control problems involving continuous and impulse controls:
\begin{equation*}
\begin{aligned}
V_h^-(t,x)&:=\inf_{\beta\in\mathcal{B}_h}\sup_{\psi\in\Psi_h}J_h\bigl(t,x;\psi,\beta(\psi)\bigr);\\
V_h^+(t,x)&:=\sup_{\alpha\in\mathcal{A}_h}\inf_{v\in\mathcal{V}_h}J_h\bigl(t,x;\alpha(v),v\bigr),
\end{aligned}
\end{equation*}
where, for the time discretization step $h$ and $d\in\mathbb{D}:=\bigl\{0,1,2,\dots,\frac{T-t}{h}-1\bigr\}$, the discrete-time mapping $y_{t,x}^h:\mathbb{D}\rightarrow\mathbb{R}^n$ depends on controls $\psi$ and $v$, and determines the discrete-time state of the DG control problems ($V_h^-$) and ($V_h^+$) by the following recursion:
\begin{equation*}
\begin{aligned}
y_{t,x}^{h}(0)=&\;x;\\
y_{t,x}^{h}(d+1)=&\;y_{t,x}^{h}(d)+hb\bigl(t+dh,y_{t,x}^{h}(d);\theta(t+dh)\bigr)\prod_{m\geq 1}\ind_{\Bigl\{\tau_m\notin\bigl[t+dh,t+(d+1)h\bigr[\Bigr\}}\prod_{k\geq 1}\ind_{\Bigl\{\rho_k\notin\bigl[t+dh,t+(d+1)h\bigr[\Bigr\}}\\
&+\sum_{m\geq1}g_\xi\bigl(\tau_m,y_{t,x}^{h}(d);\xi_m\bigr)\ind_{\bigl[t+dh,t+(d+1)h\bigr[}(\tau_m)\prod_{k\geq 1}\ind_{\{\tau_m\neq\rho_k\}};\\
&+\sum_{k\geq1}g_\eta\bigl(\rho_k,y_{t,x}^{h}(d);\eta_k\bigr)\ind_{\bigl[t+dh,t+(d+1)h\bigr[}(\rho_k),
\end{aligned}
\end{equation*}
the discrete-time gain/cost functional $J_h$ is given by
\begin{equation*}
\begin{aligned}
J_h(t,x;\psi,v):=&\;h\sum_{d\in\mathbb{D}}f\bigl(t+d h,y_{t,x}^{h}(d);\theta(t+dh)\bigr)(1-\lambda h)^d\\
&-\sum_{m\geq 1}\sum_{d\in\mathbb{D}}c\bigl(\tau_m,y_{t,x}^h(d);\xi_m\bigr)(1-\lambda h)^d\ind_{\bigl[t+dh,t+(d+1)h\bigr[}(\tau_m)\prod_{k\geq 1}\ind_{\{\tau_m\neq\rho_k\}}\\
&+\sum_{k\geq 1}\sum_{d\in\mathbb{D}}\chi\bigl(\rho_k,y_{t,x}^h(d);\eta_k\bigr)(1-\lambda h)^d\ind_{\bigl[t+dh,t+(d+1)h\bigr[}(\rho_k)\\
&+G\biggl(y_{t,x}^h\Bigl(\frac{T-t}{h}\Bigr)\biggr)(1-\lambda h)^{\frac{T-t}{h}},
\end{aligned}
\end{equation*}
\begin{equation*}
\begin{aligned}
\Psi_h&:=\Bigl\{\text{Subset of}\;\Psi\;\text{consisting of all controls with constant values on each interval}\;\bigl[t+dh,t+(d+1)h\bigr[\Bigr\};\\
\mathcal{B}_h&:=\Bigl\{\text{Subset of}\;\mathcal{B}\;\text{consisting of all non-anticipative strategies of}\;\Psi_h\;\text{to}\;\mathcal{V}_h,\;\text{where}\;\mathcal{V}_h\;\text{is the set of impulse}\\&\text{controls with constant impulse values on each interval}\;\bigl[t+dh,t+(d+1)h\bigr[\Bigr\},
\end{aligned}
\end{equation*}
similarly we define the set $\mathcal{A}_h$. Following \cite{CD83} one might deduce, for any $t\in[0,T]$ and $x\in\mathbb{R}^n$, the representation formula $v_h(t,x)=V_h^-(t,x)=V_h^+(t,x)$. Hence by focusing only on the discrete-time control problem ($V_h^-$) we can deduce the optimal strategies. In other words, we use the fact that $v_h$ is the unique bounded continuous solution to the approximate equation (HJBI$_{h}$), the formula $v_h=V_h^-$ and the convergence $v_h\underset{h\rightarrow0}{\rightarrow}V$ to define some discrete-time optimal controls $$\psi^*_h:=\bigl(\theta^*_h(.),u^*_h:=({\tau_m^h}^*,{\xi_m^h}^*)_{m\in\mathbb{N}^*}\bigr)\in\Psi_h\times\mathcal{U}_h,\;\text{and}\;v^*_h:=({\rho_k^h}^*,{\eta_k^h}^*)_{k\in\mathbb{N}^*}\in\mathcal{V}_h,$$
for both discrete-time DG control problems ($V_h^-$) and ($V_h^+$). Since the function $v_h$ separate the domain $[t,T]\times\mathbb{R}^n$ into many regions including the following region:
\begin{equation*}
\begin{aligned}
\mathcal{R}:=\Bigl\{(s,y)\in[t,T]\times\mathbb{R}^n:\;&H_h\bigl(s,y,v_h(s,y)\bigr)=0,\;v_h(s,y)-\Phi(h)\mathcal{H}_{sup}^c v_h(s,y)\geq0\\
&\text{and}\;v_h(s,y)-\Phi(h)\mathcal{H}_{inf}^\chi v_h(s,y)\leq0\Bigr\},
\end{aligned}
\end{equation*}
then the expressions of the optimal impulse stopping times ${\tau_m^h}^*,{\rho_k^h}^*$ and values ${\xi_m^h}^*,{\eta_k^h}^*$ follow immediately and were given, respectively, by the aforementioned expressions $u^*$ and $v^*$ for $h$ tends to zero. We now focus on the optimal continuous control $\theta_h^*(.)$ by assuming, without loss of generality, that there are no impulse controls for both players, i.e. $\tau_1=\rho_1=T$, and proceeding as in \cite{CDI84}. It will be useful in what follows to consider the piece-wise constant extension $\tilde{y}_{t,x}^h(.)$ to $[t,T]$ of the mapping $s\to y_{t,x}^h(s/h)$ defined, for $k\in\{0,1,2,\dots,\frac{T-t}{h}\}$, on $\{t+kh\}$ by $\tilde{y}_{t,x}^h(s)=y_{t,x}^h\bigl([s/h]\bigr)$, where $[s/h]$ denotes the largest integer which is less than or equal to $s/h$. From the definition of the region $\mathcal{R}$, we deduce that there exists a function $\theta_h^*:\mathbb{R}^n\to\mathbb{R}^l$, such that for all $(s,y)\in[t,T]\times\mathbb{R}^n$ we have
\begin{equation}\label{equation5.1}
v_h(s,y)-(1-\lambda h)v_h\Bigl(s+h,y+hb\bigl(s,y;\theta_h^*(y)\bigr)\Bigr)-hf\bigl(s,y;\theta_h^*(y)\bigr)=0,
\end{equation}
where
$$\theta_h^*(y)=\arg\sup_{\theta\in\mathbb{R}^l}\Bigl\{(1-\lambda h)v_h\bigl(s+h,y+hb(s,y;\theta)\bigr)+hf(s,y;\theta)\Bigl\},$$
define then a discrete-time state mapping ${y_{t,x}^h}^*:\mathbb{D}\to\mathbb{R}^n$ by
$${y_{t,x}^{h}}^*(0)=x,\;\text{and}\;{y_{t,x}^{h}}^*(d+1)={y_{t,x}^{h}}^*(d)+hb\Bigl(t+dh,{y_{t,x}^{h}}^*(d);\theta_h^*\bigl({y_{t,x}^{h}}^*(d)\bigr)\Bigr),$$
and a function $\tilde{\theta}_h^*:[t,T]\to\mathbb{R}^l$ by
$$\tilde{\theta}_h^*(s)=\theta_h^*\Bigl({y_{t,x}^{h}}^*\bigl([s/h]\bigr)\Bigr),\;\text{for all}\;s\in[t,T].$$
Equation (\ref{equation5.1}) leads, for $d\in\mathbb{D}$, to $$v_h(s,y)=(1-\lambda h)^dv_h\bigl(s+dh,{y_{t,x}^{h}}^*(d)\bigr)+h\sum_{i=1}^{d-1}(1-\lambda h)^if\Bigl(s,{y_{t,x}^{h}}^*(i);\theta_h^*\bigl({y_{t,x}^{h}}^*(i)\bigr)\Bigr).$$
The fact that the control $\tilde{\theta}_h^*(.)$ has constant values on each interval $\bigl[t+dh,t+(d+1)h\bigr[$ and the boundedness of $v_h$ confirm that $\tilde{\theta}_h^*(.)$ in the optimal continuous control for the problem with no impulses, for which the expression was giving by $\theta^*(.)$. Therefore, using the representation formula $v_h=V_h^-$ we get $v_h(t,x)=J_h(t,x;\psi_h^*,v_h^*)$. Now, following \cite{CDI84}, we write $$\lim_{h\rightarrow 0}J_h(t,x;\psi_h^*,v_h^*)=\lim_{h\rightarrow 0}J(t,x;\psi_h^*,v_h^*),$$ we then get, from the convergence $v_h\underset{h\rightarrow0}{\rightarrow}V$, that $\lim_{h\rightarrow 0}\tilde{\theta}_h^*(.)$ represents the optimal continuous control for the value $V$. Thus $V(t,x)=J(t,x;\psi^*,v^*)$ for optimal controls given by $\theta^*(.)$, $u^*$ and $v^*$. Hence we obtain the thesis.
\end{proof}
Hence, the third contribution of the paper as mentioned in Remark \ref{Remark2.3}. The obtained results make us ready to introduce a new continuous-time portfolio optimization model as an application, and this is the subject of the next Section \ref{Sect.6}.
\section{Application to Continuous-Time Portfolio Optimization}\label{Sect.6}
An interesting framework of the theory of deterministic finite-time horizon, two-player, zero-sum, DGs involving continuous and impulse controls, developed in the present paper, is provided by the \textit{continuous-time portfolio optimization} problem. In this section we address an application of our results to the analysis of a \textit{new} continuous-time portfolio optimization model, in which the investor plays against the market and wishes to maximize his \textit{discounted terminal payoff}, or to minimize a given \textit{cost}. In Section \ref{Sect.6.1} below the dynamical system (S$^\pi$) describes the investor's \textit{wealth} at time $s\in[t,T]$, while the functional $J^\pi$ represents his \textit{discounted terminal gain/cost}. On one hand, the market (maximizing player$-\xi$) wishes to minimize the investor's discounted terminal payoff (i.e., maximize the gain functional $J^\pi$, where, on the other hand, the investor (minimizing player$-\eta$) uses an impulse control to re-balance his portfolio in order to minimize the given cost functional $J^\pi$. Thus, the value function represents the investor's lost in the \textit{worst-case scenario}. Hence, our results can be used to derive a new continuous-time portfolio optimization model.
\subsection{Formulation of a New Continuous-Time Portfolio Optimization Model}\label{Sect.6.1}
We describe hereafter our two-player, zero-sum, deterministic DG approach for continuous-time portfolio optimization problem in finite-time horizon. We first adjust the expressions of the dynamics $b$, $g_\xi$ and $g_\eta$ in the standing dynamical system (S) to get a new one (S$^\pi$), which characterizes the investor's wealth at each instant $s$ between the initial time $t$ and the horizon $T$. Next, we approach the resulted continuous-time portfolio optimization problem by the non-linear HJBI equation and its approximate equation (HJBI$_{h}$).
\subsubsection{Dynamic of the Portfolio's State} Our finite-time horizon deterministic DG approach leads to a new continuous-time portfolio optimization model in which the investor's wealth is described by the following dynamical system (S$^\pi$):
\begin{equation*}
\text{(S$^\pi$)}\;\left\{
\begin{aligned}
\dot W_{t,w}(s)&=W_{t,w}(s)\sum_{i=1}^{N}\omega_i^\pi(s)\dot{R_i}(s),\;s\neq\tau_m,\;s\neq\rho_k,\;s\in[t,T],\;\text{where}\;t\geq 0\;\text{and}\;T\in(0,+\infty);\\
W_{t,w}(\tau_m^+)&=W_{t,w}(\tau_m^-)\biggl(1+\sum_{i=1}^{N}\omega^\xi_{i,m}dR_i(\tau_m)\prod_{k\geq 1}\ind_{\{\tau_m\neq\rho_k\}}\biggr),\;\tau_m\in[t,T],\;\bigl[\omega^\xi_{1,m},\dots,\omega^\xi_{N,m}\bigr]^\top\neq 0;\\
W_{t,w}(\rho_k^+)&=W_{t,w}(\rho_k^-)\biggl(1+\sum_{i=1}^{N}\omega^\eta_{i,k}dR_i(\rho_k)\biggr),\;\rho_k\in[t,T],\;\bigl[\omega^\eta_{1,k},\dots,\omega^\eta_{N,k}\bigr]^\top\neq 0;\\
W_{t,w}(t^-)&=w\;(\text{investor's initial wealth}).
\end{aligned}
\right.
\end{equation*}
Here $N=l=p=q$ is the number of stocks in the market, $\top$ denotes transpose, and $R_i(s)$ is the function that describes the cumulative return of $i-$th stock up to time $s$ starting from $t$, where $dR_i(s)=\frac{dP_i(s)}{P_i(s)}$ for $P_i(s)$ being the price of $i-$th stock at time $s$. The mapping $W_{t,w}:[t,T]\rightarrow\mathbb{R}_+$ represents the investor's wealth at time $s\in[t,T]$ with initial value $w>0$ at time $t^-$. The wealth $W_{t,w}(s)$ gives the state of the investor's portfolio $\pi$ at time $s$ which is controlled by:
\begin{enumerate}[i.]
\item A \textit{continuous control} $\omega^\pi(.):=\bigl[\omega_1^\pi(.),\dots,\omega_N^\pi(.)\bigr]^\top$ which represents the investor's instantaneous portfolio composition, i.e., the portfolio's weights vector resulted from the market fluctuations. Thus, the vector $\omega^\pi(s)$, combined with the cumulative returns vector $\bigl[R_1(s),\dots,R_N(s)\bigr]^\top$, characterizes the investor's wealth at any time $s\in[t,T]$;
\item Two \textit{Impulse controls} $$u:=\Bigl(\tau_m,\omega^\xi_m:=\bigl[\omega^\xi_{1,m},\dots,\omega^\xi_{N,m}\bigr]^\top\Bigr)_{m\in\mathbb{N}^*},\;\text{and}\;v:=\Bigl(\rho_k,\omega^\eta_k:=\bigl[\omega^\eta_{1,k},\dots,\omega^\eta_{N,k}\bigr]^\top\Bigr)_{k\in\mathbb{N}^*},$$ which describe new investor's portfolio compositions at some jump instants $\tau_m$ and $\rho_k$, respectively. That is whenever the continuous control $\omega^\pi(.)$ doesn't perform, the market (player$-\xi$) uses a new optimal portfolio composition determined at each impulse instant $\tau_m$ by the impulse value $\omega^\xi_m$, while the investor (player$-\eta$) adjusts his portfolio at each impulse instant $\rho_k$ using the impulse value $\omega^\eta_k$ to outperform the market.
\end{enumerate}
\begin{remark}[Another Formulation]
If $r_i(s):=\dot R_i(s)=\frac{dR_i(s)}{ds}$ denotes the instantaneous return of $i-$th stock, i.e., $R_i(s):=\int_{t}^{s}r_i(\tau)d\tau$ is the cumulative return of $i-$th stock on $[t,s]$ satisfying $R_i(t)=0$, then our dynamical system (S$^\pi$) can be rewritten as follows:
\begin{equation*}
\left\{
\begin{aligned}
\dot W_{t,w}(s)&=W_{t,w}(s)\sum_{i=1}^{N}\omega_i^\pi(s)r_i(s),\;s\neq\tau_m,\;s\neq\rho_k,\;s\in[t,T],\;\text{where}\;t\geq 0\;\text{and}\;T\in(0,+\infty);\\
\dot W_{t,w}(\tau_m)&=W_{t,w}(\tau_m^-)\sum_{i=1}^{N}\omega^\xi_{i,m}r_i(\tau_m)\prod_{k\geq 1}\ind_{\{\tau_m\neq\rho_k\}},\;\tau_m\in[t,T],\;\bigl[\omega^\xi_{1,m},\dots,\omega^\xi_{N,m}\bigr]^\top\neq 0;\\
\dot W_{t,w}(\rho_k)&=W_{t,w}(\rho_k^-)\sum_{i=1}^{N}\omega^\eta_{i,k}r_i(\rho_k),\;\rho_k\in[t,T],\;\bigl[\omega^\eta_{1,k},\dots,\omega^\eta_{N,k}\bigr]^\top\neq 0;\\
W_{t,w}(t^-)&=w\;(\text{investor's initial wealth}).
\end{aligned}
\right.
\end{equation*} \qed
\end{remark}
Since $\omega^\pi(.),\;\omega^\xi_m$ and $\omega^\eta_k$ are three weights vectors, then the following constraint has to be satisfied:
\begin{equation*}
\text{(C)}\;\sum_{i=1}^{N}\omega_i^\pi(s)=\sum_{i=1}^{N}\omega_{i,m}^\xi=\sum_{i=1}^{N}\omega_{i,k}^\eta=1,\;\text{for any}\;s\in[t,T]\;\text{and}\;m,k\in\mathbb{N}^*.
\end{equation*}
\subsubsection{Continuous-Time Portfolio Optimization Problem}
Our zero-sum deterministic DG approach consists then in defining the investor's wealth $W_{t,w}(s)$ at time $s\in[t,T]$ by the solution of the following dynamical equation:
\begin{equation*}
\begin{aligned}
\text{(E2)}\;W_{t,w}(s)=&\;w+\int_{t}^{s}W_{t,w}(\tau)\sum_{i=1}^{N}\omega_i^\pi(\tau)\frac{dP_i(\tau)}{P_i(\tau)}+\sum_{m\geq 1}W_{t,w}(\tau_m^-)\sum_{i=1}^{N}\omega^\xi_{i,m}\frac{dP_i(\tau_m)}{P_i(\tau_m)}\ind_{[\tau_m,T]}(s)\prod_{k\geq 1}\ind_{\{\tau_m\neq\rho_k\}}\\
&+\sum_{k\geq 1}W_{t,w}(\rho_k^-)\sum_{i=1}^{N}\omega^\eta_{i,k}\frac{dP_i(\rho_k)}{P_i(\rho_k)}\ind_{[\rho_k,T]}(s).
\end{aligned}
\end{equation*}
\par We denote by $\psi:=\Bigl(\omega^\pi(.),u:=\bigl(\tau_m,\omega^\xi_m\bigr)_{m\in\mathbb{N}^*}\Bigr)\in\Psi$ and $v:=\bigl(\rho_k,\omega^\eta_k\bigr)_{k\in\mathbb{N}^*}\in\mathcal{V}$ the continuous-impulse control for the market (player$-\xi$) and the impulse control for the investor (player$-\eta$), respectively, and we assume that the investor reacts immediately to the market whereas the market is not so quick in reacting to the investor's moves, i.e., the investor's impulse action comes first whenever the impulse times for the two players coincide. Moreover, we assume that the investor does not consume wealth in the process of investing but is only interested to maximize his discounted terminal payoff, that is minimizing the following gain/cost functional:
\begin{equation*}
\begin{aligned}
J^\pi(t,w;\psi,v):=&\int_t^T f^\pi\bigl(s,W^{\psi,v}_{t,w}(s);\omega^\pi(s)\bigr)\exp\bigl(-\lambda(s-t)\bigr)ds\\
&-\sum_{m\geq 1}c^\pi\bigl(\tau_m,W^{\psi,v}_{t,w}(\tau_m^-);\omega_m^\xi\bigr)\exp\bigl(-\lambda(\tau_m-t)\bigr)\ind_{\{\tau_m\leq T\}}\prod_{k\geq 1}\ind_{\{\tau_m\neq\rho_k\}}\\
&+\sum_{k\geq 1}\chi^\pi\bigl(\rho_k,W^{\psi,v}_{t,w}(\rho_k^-);\omega_k^\eta\bigr)\exp\bigl(-\lambda(\rho_k-t)\bigr)\ind_{\{\rho_k\leq T\}}\\
&+G^\pi\bigl(W^{\psi,v}_{t,w}(T)\bigr)\exp\bigl(-\lambda(T-t)\bigr),
\end{aligned}
\end{equation*}
where the functional $J^\pi$ represents the investor's discounted terminal cost, with the following components:
\begin{enumerate}[i.]
\item The running gain/cost of integral type giving, for example, by the investor's stokes holding cost $l^\pi$ minus his instantaneous utility function $u^\pi$, that is $f^\pi(.,.;.):=(l^\pi-u^\pi)(.,.;.);$
\item The maximizing player's (market) $\bigl(\text{resp. minimizing player's (investor)}\bigr)$ cost function $c^\pi$ $\bigl(\text{resp.}\;\chi^\pi\bigr)$ which corresponds to the cost of selling or buying stokes at impulse instant $\tau_m$ $(\text{resp.}\;\rho_k)$;
\item The terminal gain/cost giving by the function $G^\pi$.
\end{enumerate}
\par Our portfolio model is then related to either one of the following optimization problems:
\begin{equation*}
\text{(P)}\;\left\{
\begin{aligned}
&\inf_{\beta\in\mathcal{B}}\sup_{\psi\in\Psi}\;J^\pi\bigl(t,w;\psi,\beta(\psi)\bigr),\;\text{or}\;\sup_{\alpha\in\mathcal{A}}\inf_{v\in\mathcal{V}}\;J^\pi\bigl(t,w;\alpha(v),v\bigr);\\
&\text{Subject to Equation (E2) and Constraint (C)}.
\end{aligned}
\right.
\end{equation*}
\subsection{Main Results and Portfolio Strategy}
We assume that the market moves according to the continuous control $\omega^\pi(.)$, creates jumps at impulse instants $\tau_m$ and tries to maximize the gain/cost functional $J^\pi$, and that the investor creates jumps at impulse instants $\rho_k$, obviously, trying to minimize $J^\pi$. We also make the assumption that the flow of funds is between the investor and the market which makes our zero-sum DG framework. Tacking into account the fact that the dynamical function $(s,w,\omega)\in[t,T]\times\mathbb{R}_+\times\mathbb{R}^N\rightarrow w\omega.P(s)\in\mathbb{R}_+$ satisfies, for a bounded $\mathbb{R}^N-$ valued function $P(s)$, the assumptions $\textbf{H}_b$ and $\textbf{H}_g$, and assuming that $\textbf{H}_f$, $\textbf{H}_{c,\chi}$ and $\textbf{H}_{G}$ hold for the functions $f^\pi,\;c^\pi,\chi^\pi$ and $G^\pi$, respectively. We might then use our results to solve the problem (P) and to conclude that the investor's maximal discounted terminal cost (i.e., the value function of the zero-sum deterministic DG control problem) can be characterized:
\begin{enumerate}[i.]
\item As the unique VS to the HJBI equation;
\item Or, as the limit of the approximate value function, i.e., the limit of the unique solution of the approximate equation (HJBI$_{h}$);
\item Or, by the optimal strategies of the NE of the zero-sum deterministic DG control problem.
\end{enumerate}
\par The following Corollary \ref{Corollary6.1} summarizes the discussion in the above by giving the \textit{portfolio strategy} for the investor and the related maximal \textit{lost} provided by the model we have developed:
\begin{corollary}\label{Corollary6.1}
A portfolio strategy $\Pi(s)$ for the investor is given, at time $s\in[t,T]$ for finite-time horizon $T$ and initial time $t$ in a market with $N$ stocks, by:
$$\Pi(s):=\Bigl\{\underset{\text{Instantaneous Market Compositions}}{\underbrace{\omega^\pi(s):=\bigl(\omega^{\pi}(s^\prime)\bigr)_{t\leq s^\prime\leq s}}};\;\underset{\text{Impulse Control of the Market}}{\underbrace{\omega^\xi(s):=\sum_{m\geq 1}\omega^{\xi}_{m}\ind_{[\tau_m,T]}(s)}};\;\underset{\text{Impulse Control of the Investor}}{\underbrace{\omega^\eta(s):=\sum_{k\geq 1}\omega^{\eta}_k\ind_{[\rho_k,T]}(s)}}\Bigr\},$$
where
\begin{equation*}
\left\{
\begin{aligned}
\omega^{\pi}(s)&:=\bigl[\omega_1^{\pi}(s),\omega_2^{\pi}(s),\dots,\omega_{N}^{\pi}(s)\bigr]^\top;\\
\omega^{\xi}_m&:=\bigl[\omega_{1,m}^{\xi},\omega_{2,m}^{\xi},\dots,\omega_{N,m}^{\xi}\bigr]^\top;\\
\omega^{\eta}_k&:=\bigl[\omega_{1,k}^{\eta},\omega_{2,k}^{\eta},\dots,\omega_{N,k}^{\eta}\bigr]^\top.
\end{aligned}
\right.
\end{equation*}
The optimal portfolio strategy $\Pi^*(s)$ is then described, for $s\in[t,T]$ and $\tau_m,\rho_k\leq s$, by the optimal sequences
\begin{equation*}
{\omega^{\pi}}^*(s),\;{\omega^\xi}^*:=(\tau_m^*,{\omega^{\xi}_m}^*)_{m\geq 1}\;\text{and}\;{\omega^\eta}^*:=(\rho_k^*,{\omega^{\eta}_k}^*)_{k\geq1},
\end{equation*}
of elements of $\mathbb{R}^N,\;[t,s]\times\mathbb{R}^N$ and $[t,s]\times\mathbb{R}^N$, respectively. The expressions of these optimal sequences are given by the verification theorem of Section \ref{Sect.5}. The investor's maximal lost is given by the value function $v(t,w)$ of the game for $w$ being the initial wealth at initial time $t$, that is by the solution of the problem (P) generated by the optimal portfolio strategy $\Pi^*(s)$. \qed
\end{corollary}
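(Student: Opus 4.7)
The plan is to recognize Corollary \ref{Corollary6.1} as an application of the general theory built in Sections \ref{Sect.2}--\ref{Sect.5} to the particular dynamical system (S$^\pi$) and gain/cost functional $J^\pi$ of Section \ref{Sect.6.1}, so the proof reduces to (i) checking that the portfolio setting fits the abstract framework, and (ii) invoking the already established theorems in the right order.

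First I would verify that the portfolio data satisfy the standing assumptions of Section \ref{Sect.2}. Set $n=1$ with state $w\in\mathbb{R}_+$, put $b^\pi(s,w;\omega):=w\,\omega\cdot \dot P(s)/P(s)$, and take the jump maps $g_\xi^\pi(s,w;\omega^\xi):=w\,\omega^\xi\cdot dP(s)/P(s)$ and $g_\eta^\pi(s,w;\omega^\eta):=w\,\omega^\eta\cdot dP(s)/P(s)$. Assuming $P(\cdot)$ is bounded with bounded continuous derivative (as stated just before the corollary) and that the admissible weights satisfy the constraint (C), these maps inherit boundedness and Lipschitz regularity from $P$, so $\mathbf{H}_b$ and $\mathbf{H}_g$ hold on the closed state domain. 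The assumption that $\mathbf{H}_f,\mathbf{H}_{c,\chi},\mathbf{H}_G$ hold for $f^\pi,c^\pi,\chi^\pi,G^\pi$ is made explicitly in the text, so the functional $J^\pi$ is a legitimate instance of the abstract gain/cost functional $J$.

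Next I would apply Corollary \ref{corollary3.1} to the portfolio DG control problem (P): the lower and upper values coincide and the common value function $v(t,w)$ is the unique BUC viscosity solution of the HJBI equation associated with $(b^\pi,g_\xi^\pi,g_\eta^\pi,f^\pi,c^\pi,\chi^\pi,G^\pi)$. This already gives the characterization claimed in item (i) after the statement, and answers the optimization problem (P) in the sense that its optimal value is $v(t,w)$, representing the investor's loss in the worst-case scenario. Item (ii) then follows immediately from Theorem \ref{theorem4.2}, which provides the approximate value function $v_h$ and its locally uniform convergence to $v$ as $h\to 0$; this yields the numerical handle on $v(t,w)$ that the corollary advertises.

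Finally, I would invoke Theorem \ref{Verification} on the portfolio problem to extract the Nash-equilibrium strategies. Specialising the constructions of $\theta^*(\cdot)$, $u^*$ and $v^*$ from Section \ref{Sect.5} to the portfolio setting, and renaming $\theta^*\leftrightarrow\omega^{\pi^*}$, $u^*\leftrightarrow{\omega^\xi}^*=(\tau_m^*,{\omega_m^\xi}^*)_{m\ge 1}$ and $v^*\leftrightarrow{\omega^\eta}^*=(\rho_k^*,{\omega_k^\eta}^*)_{k\ge 1}$, one obtains exactly the sequences appearing in the statement of the corollary. Assembling these three families into the triple $\Pi^*(s)=\bigl(\omega^{\pi^*}(s),\omega^{\xi^*}(s),\omega^{\eta^*}(s)\bigr)$ defined on $[t,s]$ gives the optimal portfolio strategy, and Theorem \ref{Verification} supplies the identity $v(t,w)=J^\pi\bigl(t,w;\psi^*,v^*\bigr)$, i.e.\ the investor's maximal loss is attained by $\Pi^*$. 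The main obstacle I anticipate is not in chaining the theorems but in checking that the constraint (C) on portfolio weights is compatible with the admissible control spaces $\Theta,\mathcal{U},\mathcal{V}$: since (C) restricts $\omega^\pi(\cdot),\omega_m^\xi,\omega_k^\eta$ to an affine hyperplane of $\mathbb{R}^N$, one has to argue that the suprema and infima in the Hamiltonian and non-local operators can be taken over this hyperplane (rather than the full Euclidean space) without destroying the regularity and compactness used in Section \ref{Sect.3} and in the verification construction.
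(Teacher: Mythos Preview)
Your proposal is correct and follows exactly the approach the paper takes: the corollary is stated in the paper without a separate proof (the \qed is inside the statement), and the justification is precisely the discussion in Section~\ref{Sect.6} preceding it, namely that the portfolio data fit the standing assumptions and then Corollary~\ref{corollary3.1}, Theorem~\ref{theorem4.2} and Theorem~\ref{Verification} apply verbatim. Your remark about the constraint~(C) restricting the control sets to an affine hyperplane is a fair caveat that the paper does not address explicitly.
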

Thus, the fourth contribution of the paper as mentioned in Remark \ref{Remark2.3}. We now provide computational algorithms for our zero-sum DG control problem.
\section{Computational Algorithms}\label{Sect.7}
Here, we give numerical aspects describing the value functions for both players, and their NE strategy and state. More precisely, we propose two computational algorithms, Algorithm \ref{Alg.1} and \ref{Alg.2}, to find the approximate value function, i.e., the value function for a time discretization step tending to zero. Using these algorithms, the NE strategy will be deduced as well as the optimal evolution of the state for our DG control problem. Our algorithms are based on the \textit{Value Iteration} and \textit{Policy Iteration} techniques (see for example Alla \& al. \cite{AFK15} and Bokanowski \& al. \cite{BMZ09}) and the \textit{Explicit Euler Scheme}. The Remark \ref{Remark7.1} below gives a brief discussion of the implementation procedure.
\begin{remark}\label{Remark7.1}
Two algorithms will be given to compute the approximate value function $v_h$, the related NE strategy $\Bigl\{\psi^*:=\bigl(\theta^*(.),u^*\bigr);v^*\Bigr\}$ and the optimal state evolution $y_{t,x}^*(.)$. Algorithm \ref{Alg.1} describes the implementation of the NE strategy to perform the desired computations and it is divided into two phases:
\begin{enumerate}[]
\item \textbf{Phase 1.} Computes the three possible values in the whole time-space grid. This phase is divided into three steps:
\begin{enumerate}
\item[Step 1] \textbf{Recursive Computation for Continuous Control.} From the Definition \ref{def2.5} of the approximate Hamiltonian ($H_h$), the expression of the optimal continuous control $\theta^*(.)$ of Section \ref{Sect.5} and the terminal value $v_h(T,y_j)=G(y_j)$, one might recursively compute $v_h(s_i,y_j)$ for $(s_i,y_j)$ in a given time-space grid when only continuous control intervene by taking:
\begin{equation*}
\left\{
\begin{aligned}
&\theta_{i,j}=\arg\sup_{\theta\in\mathbb{R}^l}\Bigl\{(1-\lambda h)v_h\bigl(s_i+h,y_j+hb(s_i,y_j;\theta)\bigr)+hf(s_i,y_j;\theta)\Bigl\};\\
&v_h(s_i,y_j)=(1-\lambda h)v_h\bigl(s_i+h,y_j+hb(s_i,y_j;\theta_{i,j})\bigr)+hf(s_i,y_j;\theta_{i,j}).\\
\end{aligned}
\right.
\end{equation*}
The numerical computations require that for any time-space grid point $(s_i,y_j)$ the quantity $y_j+hb(s_i,y_j;\theta)$ remains in the space domain, all our numerical tests are such that this condition holds true. We use a linear interpolation operator $I$, $I[V]:[t,T]\times\mathbb{R}^n\rightarrow\mathbb{R}$ (see \cite{AFK15} and the Appendix of \cite{BC97}), to compute $v_h$ at any space point $y_j+hb(s_i,y_j;\theta)$, that is for a given vector $V_i=[V(s_i,y_1),\dots,V(s_i,y_j),\dots]^\top$ we have
\begin{equation}\label{equation7.1}
I[V_i]\bigl(s_{i},y_j+hb(s_i,y_j;\theta)\bigr):=V\bigl(s_{i},y_j+hb(s_i,y_j;\theta)\bigr).
\end{equation}
We mention that $\theta_{i,j}$ refers to $\theta(s_i)$, the value of the continuous control at time $s_i$ when the value of the state is $y_j$, and it is denoted by $\theta_{y_j}(s_i)$ in the Algorithm \ref{Alg.1} and in the graphical representation. 
\item[Step 2] \textbf{If the Minimizing Player-$\eta$ Intervene.} From the verification theorem, Theorem $\ref{Verification}$, one might conclude that the minimizing player-$\eta$ intervene at time $s_i$ only when
$$v_h\bigl(s_i,y_{t,x}^*(s_i)\bigr)-\Phi(h)\mathcal{H}_{inf}^\chi v_h\bigl(s_i,y_{t,x}^*(s_i)\bigr)>0,$$
where $y_{t,x}^*(s_i)$ denotes the optimal state which is progressively computed in \textbf{Phase 2} of Algorithm \ref{Alg.1}, and then a new approximate value has to be computed by solving the equation
\begin{equation}\label{equation7.2}
v_h(s_i,y_j)=\Phi(h)\inf_{\eta\in\mathbb{R}^q}\Bigl\{v_h\bigl(s_i,y_j+g_\eta(s_i,y_j;\eta)\bigr)+\chi(s_i,y_j;\eta)\Bigr\}.
\end{equation}
A \textit{Value Iteration Algorithm} is used to solve the equation (\ref{equation7.2}) for which the right hand side is computed by means of the linear interpolation operator $I$ of equation (\ref{equation7.1}) where the quantity $y_j+g_\eta(s_i,y_j;\eta)$ remains in the space domain. Algorithm \ref{Alg.2} describes the value iteration for solving the equation (\ref{equation7.2}) and gives the value for the minimizing player$-\eta$ in the whole space grid for a given time point $s_i$. The policy iteration of Algorithm \ref{Alg.1} will give the optimal size of the impulses.
\item[Step 3] \textbf{If the Maximizing Player-$\xi$ Intervene.} When the minimizing player-$\eta$ does not intervene at time $s_i$, the maximizing player-$\xi$ might intervene when $v_h\bigl(s_i,y_{t,x}^*(s_i)\bigr)-\Phi(h)\mathcal{H}_{sup}^cv_h\bigl(s_i,y_{t,x}^*(s_i)\bigr)<0$. Similarly a new approximate value for player-$\xi$ has to be computed using the value iteration method of Algorithm \ref{Alg.2}, the policy iteration of Algorithm \ref{Alg.1} will give the optimal size of the impulses.
\end{enumerate}
\item \textbf{Phase 2.} Generates successively the optimal controls $\Bigl\{\psi^*:=\bigl(\theta^*(s_i),u^*\bigr);v^*\Bigr\}$, the optimal state evolution $y_{s_1,y_k}^*(s_i)$ using the \textit{Explicit Euler Scheme} and the approximate value function $v_h\bigl(s_i,y_{s_1,y_k}^*(s_i)\bigr)$ for our zero-sum DG control problem. These outputs are denoted $\Bigl\{\psi^*:=\bigl(\theta_i^*,u^*:=(\tau_m^*,\xi_m^*)_{m\in\mathbb{N}^*}\bigr);v^*:=(\rho_k^*,\eta_k^*)_{k\in\mathbb{N}^*}\Bigr\}$, $y_i^*$ and $V_i^*$, respectively. \qed
\end{enumerate}
\end{remark}
\newpage
\begin{algorithm}[H]
\caption{\textsc{Policy Iteration for the Zero-Sum Game Problem Using the NE Strategy}}
\label{Alg.1}                          
\begin{algorithmic}[1]
\Require \\
Time-space grid $(s_i,y_j)$, where $s_i\in\{s_1=t,s_2,\dots,s_I=T\}$ and $y_j\in\{y_1,y_2,\dots,y_k,\dots,y_J\}$;\\
Initial state $x:=y_k\in\mathbb{R}^n$ for a carefully chosen $k$;\\
Initial continuous control $\theta_{0}^*\in\mathbb{R}^l$ at $s_1^-$;\\
Initial impulse values: $\xi_1^*\in\mathbb{R}^p$ at $\tau_1^*:=s_1^-$, and $\eta_1^*\in\mathbb{R}^q$ at $\rho_1^*:=s_1$;\\
Initial policies $\xi^{k}:=[\xi_{1}^{k},\dots,\xi_{J}^{k}]^\top\in\mathbb{R}^{p\times J}$ and $\eta^{k}:=[\eta_{1}^{k},\dots,\eta_{J}^{k}]^\top\in\mathbb{R}^{q\times J}$ at time $s_1^-$ and $s_1$, respectively;\\
Functions $b,g_\xi,g_\eta,f,c,\chi,G$ and $\Phi$, discount factor $\lambda$, discretization step $h$ and tolerance $\epsilon$.
\Ensure \\
NE strategy
$\bigl\{\theta_i^*:=\theta^*(s_i);u^*:=\{\tau_m^*,\xi_m^*\}_{m\in\mathbb{N}^*};v^*:=\{\rho_k^*,\eta_k^*\}_{k\in\mathbb{N}^*}\bigr\}$ for any $i$, where $\tau_m^*,\rho_k^*<s_I$;\\
Optimal state evolution in time grid given by $y_i^*:=y^*(s_i)=y^{\theta^*(.),u^*,v^*}_{s_1,y_k}(s_i)$ for any $i$;\\
Optimal value in time grid given by $V^*_{i}:=v_h(s_i,y_i^*)$ for any $i$.\newline
\State $V_{I,j}\gets G(y_j)\;\text{for all}\;j=1,\dots,J$; \Comment{Terminal value $V_{I,j}:=V(s_I,y_j)$.}
\State \textbf{Phase 1:} Backward Computation of the Values in Time-Space Grid.
\For {$i$ equals $I-1$ to $1$}
\State \textbf{Step 1:} Recursive Computation. \Comment{Computes $\theta_{i,j}:=\theta_{y_j}(s_i)$ and $V_{i,j}:=V(s_i,y_j)$ for any $i,j$.}
\For {$j$ equals $1$ to $J$}\Comment{$V_{i+1}$ denotes $[V_{i+1,1},\dots,V_{i+1,J}]^\top$.}
\State $\theta_{i,j}\gets\arg\sup_{\theta\in\mathbb{R}^l}\Bigl\{(1-\lambda h)I[V_{i+1}]\bigl(s_{i+1},y_j+hb(s_i,y_j;\theta)\bigr)+hf(s_i,y_j;\theta)\Bigr\}$;
\State $V_{i,j}\gets(1-\lambda h)I[V_{i+1}]\bigl(s_{i+1},y_j+hb(s_i,y_j;\theta_{i,j})\bigr)+hf(s_i,y_j;\theta_{i,j})$.
\EndFor
\State \textbf{Step 2:} The Value if the Minimizing Player-$\eta$ Intervene at Time $s_i$.
\State $\eta_i^{k+1}\gets\eta_i^k+\textbf{1}$ an initial guess; \Comment{Ensures the first iteration of while loop.}
\While{$\|\eta_i^{k+1}-\eta_i^k\|\geq\epsilon$} (or max iteration count reached)
\State $\eta_i^k\gets\eta_i^{k+1}$; \Comment{Sets a new loop to \textit{Evaluate} and \textit{Improve} the policy $\eta_{i}^{k+1}$.}
\State Policy \textit{Evaluation} Step:
\For {$j$ equals $1$ to $J$}\Comment{$\underline{V}_i^k$ denotes $[\underline{V}_{i,1}^k,\dots,\underline{V}_{i,J}^k]^\top$.}
\State Compute, using the \textit{Value Iteration} of the Algorithm \ref{Alg.2}, $\underline{V}_{i,j}^k$ solution of the equation
\begin{equation}\label{equation_7.3}
\underline{V}_{i,j}^k=\Phi(h)\Bigl(I[\underline{V}_i^k]\bigl(s_{i},y_j+g_\eta(s_i,y_j;\eta_{i,j}^k)\bigr)+\chi(s_i,y_j;\eta_{i,j}^k)\Bigl);
\end{equation}
\EndFor
\State Policy \textit{Improvement} Step:
\For {$j$ equals $1$ to $J$}
\State
\begin{equation*}
\begin{aligned}
\eta_{i,j}^{k+1}&\gets\arg\inf_{\eta\in\mathbb{R}^q}\Bigl\{\Phi(h)\Bigl(I[\underline{V}_i^k]\bigl(s_{i},y_j+g_\eta(s_i,y_j;\eta)\bigr)+\chi(s_i,y_j;\eta)\Bigl)\Bigr\};\\
\underline{V}_{i,j}^{k+1}&\gets\Phi(h)\Bigl(I[\underline{V}_i^k]\bigl(s_{i},y_j+g_\eta(s_i,y_j;\eta_{i,j}^{k+1})\bigr)+\chi(s_i,y_j;\eta_{i,j}^{k+1})\Bigl);
\end{aligned}
\end{equation*}
\EndFor
\EndWhile
\State $[\eta_{i,1},\dots,\eta_{i,J}]^\top\gets[\eta_{i,1}^{k+1},\dots,\eta_{i,J}^{k+1}]^\top$;
\State $[\underline{V}_{i,1},\dots,\underline{V}_{i,J}]^\top\gets[\underline{V}_{i,1}^{k+1},\dots,\underline{V}_{i,J}^{k+1}]^\top$.
\algstore{myalg}
\end{algorithmic}
\end{algorithm}

\begin{algorithm}[H]
\begin{algorithmic} [1]
\algrestore{myalg}
\State \textbf{Step 3:} The Value if the Maximizing Player-$\xi$ Intervene at Time $s_i$.
\State $\xi_i^{k+1}\gets\xi_i^k+\textbf{1}$ an initial guess; \Comment{Ensures the first iteration of while loop.}
\While{$\|\xi_i^{k+1}-\xi_i^k\|\geq\epsilon$} (or max iteration count reached)
\State $\xi_i^k\gets\xi_i^{k+1}$; \Comment{Sets a new loop to \textit{Evaluate} and \textit{Improve} the policy $\xi_{i}^{k+1}$.}
\State Policy \textit{Evaluation} Step:
\For {$j$ equals $1$ to $J$}\Comment{$\overline{V}_i^k$ denotes $[\overline{V}_{i,1}^k,\dots,\overline{V}_{i,J}^k]^\top$.}
\State Compute, using the \textit{Value Iteration} of the Algorithm \ref{Alg.2}, $\overline{V}_{i,j}^k$ solution of the equation
\begin{equation}\label{equation_7.4}
\overline{V}_{i,j}^k=\Phi(h)\Bigl(I[\overline{V}_i^k]\bigl(s_{i},y_j+g_\xi(s_i,y_j;\xi_{i,j}^k)\bigr)-c(s_i,y_j;\xi_{i,j}^k)\Bigl);
\end{equation}
\EndFor
\State Policy \textit{Improvement} Step:
\For {$j$ equals $1$ to $J$}
\State
\begin{equation*}
\begin{aligned}
\xi_{i,j}^{k+1}&\gets\arg\sup_{\xi\in\mathbb{R}^p}\Bigl\{\Phi(h)\Bigl(I[\overline{V}_i^k]\bigl(s_{i},y_j+g_\xi(s_i,y_j;\xi)\bigr)-c(s_i,y_j;\xi)\Bigl)\Bigr\};\\
\overline{V}_{i,j}^{k+1}&\gets\Phi(h)\Bigl(I[\overline{V}_i^k]\bigl(s_{i},y_j+g_\xi(s_i,y_j;\xi_{i,j}^{k+1})\bigr)-c(s_i,y_j;\xi_{i,j}^{k+1})\Bigl);
\end{aligned}
\end{equation*}
\EndFor
\EndWhile
\State $[\xi_{i,1},\dots,\xi_{i,J}]^\top\gets[\xi_{i,1}^{k+1},\dots,\xi_{i,J}^{k+1}]^\top$;
\State $[\overline{V}_{i,1},\dots,\overline{V}_{i,J}]^\top\gets[\overline{V}_{i,1}^{k+1},\dots,\overline{V}_{i,J}^{k+1}]^\top$.
\EndFor
\State \textbf{Phase 2:} Forward Deduction of the Optimal Controls (NE), State and Value in Time Grid.
\State $y^*_1,m,k\gets x,2,2$;
\For{$i$ equals $1$ to $I-1$} \Comment{Here, $j$ is such that $y_j\approx y^*_i$, i.e., $V_{i,j}\approx V(s_i,y^*_i)$.}
\If{$V_{i,j}>\Phi(h)\inf_{\eta\in\mathbb{R}^q}\Bigl\{I[V_i]\bigl(s_i,y^*_i+g_\eta(s_i,y^*_i;\eta)\bigr)+\chi(s_i,y^*_i;\eta)\Bigr\}$} \Comment{Miminizing Player$-\eta$ intervene.}
\State $\rho_k^*,\eta_k^*\gets s_i,\eta_{i,j}$;
\State $y^*_{i+1}\gets y^*_{i}+g_\eta(\rho_k^*,y^*_i;\eta_k^*)$;
\State $V^*_{i}\gets\underline{V}_{i,j}$;
\State $k\gets k+1$;
\ElsIf{$V_{i,j}<\Phi(h)\sup_{\xi\in\mathbb{R}^p}\Bigl\{I[V_i]\bigl(s_i,y_i^*+g_\xi(s_i,y_i^*;\xi)\bigr)-c(s_i,y_i^*;\xi)\Bigr\}$} \Comment{Maximizing Player$-\xi$ intervene.}
\State $\tau_m^*,\xi_m^*\gets s_i,\xi_{i,j}$;
\State $y^*_{i+1}\gets y^*_{i}+g_\xi(\tau_m^*,y^*_i;\xi_m^*)$;
\State $V^*_{i}\gets\overline{V}_{i,j}$;
\State $m\gets m+1$;
\Else \Comment{Optimal Continuous Control Intervene.}
\State $\theta^*_i\gets\theta_{i,j}$; \Comment{$\theta_{i}^*$ gets $\theta_{y^*_i}(s_i)$.}
\State $y^*_{i+1}\gets y^*_{i}+hb(s_i,y^*_i;\theta^*_i)$
\State $V^*_{i}\gets V_{i,j}$ 
\EndIf
\EndFor

\end{algorithmic}
\end{algorithm}
\begin{algorithm}[H]
\caption{\textsc{Value Iteration to Compute the Player's Value Function}}
\label{Alg.2}
\begin{algorithmic}[1]
\Require \\
Time $s_i$ and space grid $\{y_1,\dots,y_j,\dots,y_J\}$;\\
Functions $g_\eta,\chi$ (or, $g_\xi,c$) and $\Phi$, impulse value $\eta$ (or, $\xi$), time discretization step $h$ and tolerance $\epsilon$;\\
Initial value $V_i^k:=[V_{i,1}^k,\dots,V_{i,J}^k]^\top$.
\Ensure \\
The value $V_{i,j}:=V(s_i,y_j)$ solution of the equation (\ref{equation_7.3}) (or, (\ref{equation_7.4})) for any $j$.\newline
\State $V_i^{k+1}\gets V_i^k+\textbf{1}$ an initial guess; \Comment{Ensures the first iteration of while loop.}
\While{$\|V_i^{k+1}-V_i^k\|\geq\epsilon$} (or max iteration count reached)
\State $V_{i}^k\gets V_{i}^{k+1}$; \Comment{Sets a new loop to \textit{Improve} the value $V_{i}^{k+1}$.}
\For {$j$ equals $1$ to $J$} $$V_{i,j}^{k+1}\gets\Phi(h)\Bigl(I[V_i^k]\bigl(s_{i},y_j+g_\eta(s_i,y_j;\eta)\bigr)+\chi(s_i,y_j;\eta)\Bigl);$$
$$\biggl(\text{or,}\;V_{i,j}^{k+1}\gets\Phi(h)\Bigl(I[V_i^k]\bigl(s_{i},y_j+g_\xi(s_i,y_j;\xi)\bigr)-c(s_i,y_j;\xi)\Bigl)\biggr);$$
\EndFor
\EndWhile
\State $[V_{i,1},\dots,V_{i,J}]^\top\gets[V_{i,1}^{k+1},\dots,V_{i,J}^{k+1}]^\top$.
\end{algorithmic}
\end{algorithm}
\newpage
\section{Conclusion}
In this paper, we have considered a new class of deterministic finite-time horizon, two-player, zero-sum DGs, where the maximizing player takes continuous and impulse controls, while the minimizing player uses impulse control only. The aims were to optimize a discounted terminal gain/cost functional, approximate the value function, and describe an optimal strategy for the two players. After studying the related HJBI double-obstacle equation in the VS framework, we have proposed a discrete-time approximation scheme for this class of DGs given by the approximate equation (HJBI$_{h}$). We have further derived a verification result which analytically characterizes the equilibrium timing and level of impulses, and describes the optimal continuous actions. Our major contributions are the comparison principle, the convergence result for the approximate value function, and the verification theorem. Moreover, we have given some meaningful dynamics $b$, $g_\xi$ and $g_\eta$ to apply our results to continuous-time portfolio optimization problem, where the investor takes priority actions (impulses) only occasionally, while the market makes decisions both continuously and in specific impulse times, in such situation our results have been successfully implemented to derive a new continuous-time portfolio optimization model. Moreover, we have provided some computational algorithms to numerically determine the value function and the corresponding NE strategies and state evolution.
\par We intend to develop this work in two main directions in the future:
\begin{enumerate}
\item It would be interesting to consider a problem with feedback continuous control $\bigl(\text{i.e.,}\;\theta\;\text{depends on}\;y_{t,x}(s)\bigr)$, thus the instantaneous evolution of the state and the running gain/cost function at time $s$ become, respectively, $$b\Bigl(s,y_{t,x}(s);\theta\bigl(s,y_{t,x}(s)\bigr)\Bigl)\;\text{and}\;f\Bigl(s,y_{t,x}(s);\theta\bigl(s,y_{t,x}(s)\bigr)\Bigr);$$
\item Another extension of our work would be to adopt a machine learning approach based on the generative adversarial networks (GANs) \cite{GP-A14} to deep generate the value function and the corresponding NE and state evolution in the mini-max game framework of GANs (see also Wiese \& al. \cite{WKKK20}).
\end{enumerate}
\subsection*{Declarations}
The second author's research is financially supported by national center for scientific and technical research CNRST, Rabat, Morocco (Grant 17 UIZ 19). The authors declare no conflict of interest.
\fancyhead[LO]{REFERENCES}

\end{document}